\documentclass{amsart}

\usepackage{amssymb, amsfonts, amsmath, amsthm,bm, amscd}
\usepackage[dvipsnames]{xcolor}
\usepackage{mathrsfs}
\usepackage{hyperref}
\usepackage{graphicx}
\graphicspath{ {./img/} }
\usepackage{caption}
\usepackage{subcaption}
\usepackage{wrapfig}
\usepackage[margin=1.00 in]{geometry}
\usepackage{enumitem, multicol}
\usepackage{tikz}
\usetikzlibrary{cd}
\usepackage{nicefrac, bigints}
\usepackage{array}
\usepackage[normalem]{ulem}
\usepackage{mathtools}

\theoremstyle{definition}
\newtheorem{theorem}{Theorem}[section]

\newtheorem{definition}[theorem]{Definition}
\newtheorem{lemma}[theorem]{Lemma}

\newtheorem{corollary}[theorem]{Corollary}
\newtheorem{proposition}[theorem]{Proposition}
\newtheorem{remark}[theorem]{Remark}

\newtheorem{claim}[theorem]{Claim}

\newtheorem{construction}[theorem]{Construction}
\newtheorem{example}[theorem]{Example}

\newcommand{\QQ}{\mathbb{Q}}
\newcommand{\RR}{\mathbb{R}}
\newcommand{\CC}{\mathbb{C}}

\newcommand{\GL}{\mathsf{GL}}

\renewcommand{\Re}{\operatorname{Re}}
\renewcommand{\Im}{\operatorname{Im}}


\newcommand*{\marking}{\varphi}

\newcommand*{\geomintersect}{\iota}
\newcommand*{\algintersect}{\hat{\iota}}
\newcommand*{\TV}{TV}
\newcommand*{\valpha}{\vec{\alpha}}
\newcommand*{\vbeta}{\vec{\beta}}
\newcommand*{\vgamma}{\vec{\gamma}}

\newcommand*{\anga}{\angle a}
\newcommand*{\angb}{\angle b}

\newcommand*{\va}{\vec{a}}
\newcommand{\vb}{\vec{b}}
\newcommand{\vc}{\vec{c}}

\newcommand{\vh}{\vec{h}}

\newcommand{\singletons}{\mathbb{S}}

\DeclareMathOperator{\hol}{hol}

\DeclarePairedDelimiter{\abs}{\lvert}{\rvert}

\newcommand{\Addresses}{{
  \bigskip
  \footnotesize
  \noindent Juliet Aygun, \textsc{Department of Mathematics, Cornell University.}\par\nopagebreak
  \textit{E-mail}: \texttt{ja742@cornell.edu}
  
  \noindent Janet Barkdoll, \textsc{Swarthmore College.}\par\nopagebreak
  \textit{E-mail}: \texttt{jbarkdo1@alum.swarthmore.edu}
  
  \noindent Aaron Calderon, \textsc{Department of Mathematics, The University of Chicago.}\par\nopagebreak
  \textit{E-mail}: \texttt{aaroncalderon@uchicago.edu}
  
  \noindent Jenavie Lorman.\par\nopagebreak
  \textit{E-mail}: \texttt{jenav.lor@gmail.com}
 
  \noindent Theodore Sandstrom, \textsc{Department of Mathematics, Statistics, and Computer Science, University of Illinois Chicago.}\par\nopagebreak
  \textit{E-mail}: \texttt{tsands3@uic.edu}
  }}

\begin{document}

\title[Realizing pairs of multicurves as cylinders]{Realizing pairs of multicurves as cylinders\\ on translation surfaces}

\author[Aygun]{Juliet Aygun}
\author[Barkdoll]{Janet Barkdoll}
\author[Calderon]{Aaron Calderon}
\author[Lorman]{Jenavie Lorman}
\author[Sandstrom]{Theodore Sandstrom}

\begin{abstract}
Any pair of intersecting cylinders on a translation surface is ``coherent,’’ in that 
the geometric and algebraic intersection numbers of their core curves are equal (up to sign).
In this paper, we investigate when a pair of multicurves can be simultaneously realized as the core curves of cylinders on some translation surface.
Our main tools are surface topology and the ``flat grafting’’ deformation introduced by Ser-Wei Fu.
\end{abstract}

\maketitle

\section{Introduction}\label{sec:introduction}

Let $S$ be a closed surface of genus $g$.
The structure of a {\em translation surface} is an atlas of charts on $S$ away from finitely many cone points whose transition functions are given by translations. Equivalently, a translation surface may be thought of as a collection of polygons in the plane with sides glued by translations (up to cut-and-paste equivalence) or as an identification of $S$ with a Riemann surface $X$ equipped with a holomorphic 1-form $\omega$.

Given a simple closed curve $c$ on $S$ and a translation structure $(X, \omega)$ on $S$, say that $c$ can be \textit{realized as a cylinder} on $(X, \omega)$ if it is isotopic to the core curve of some embedded Euclidean cylinder on $(X,\omega)$.
A {\em multicurve} $\gamma$ on $S$ is a union of pairwise disjoint simple closed curves; if each curve of $\gamma$ is realized as a cylinder on $(X, \omega)$, then we say that the union of these cylinders is a {\em multicylinder.}
A multicylinder is {\em parallel} if all of the core curves have the same slope (relative to the horizontal vector field).
Cylinders and multicylinders have long been an important tool in the study of translation surfaces (see, e.g., \cite{Wcyldefs}), and recently new attention has been paid to their surface-topological aspects
\cite{AMNorigami, CHANG2021107730, jeffreys2021single}.

Given any multicurve $\gamma$ on $S$, it is not hard to prove that $\gamma$ can be realized as a multicylinder on some translation surface if and only if each of its curves is non-separating: simply find a translation surface containing a multicylinder of the correct topological type (e.g., using \cite{ZorJS} or \cite[\S 6.3]{strata2}) and use a homeomorphism to take $\gamma$ to that multicylinder.
On the other hand, given a {\em pair} of multicurves $\alpha$ and $\beta$, their intersection pattern may obstruct the existence of a translation surface which simultaneously realizes them as a pair of multicylinders.
This is because, given an orientation of the multicurves, the signs of the intersection points constrain the slopes of the realizing cylinders.

This sort of reasoning implies that if two oriented curves $\va$ and $\vb$ are simultaneously realizable as cylinders on a translation surface, then their geometric intersection number must equal their algebraic intersection number (up to sign); see Lemma \ref{1 parallel implies coherently orientable} below.
Such a pair of curves is called {\em coherent},
and if $\valpha$ and $\vbeta$ are two oriented multicurves, we say they are {\em coherent} if the same property holds.
Observe that for multicurves, coherence is stronger than the property of {\em pairwise coherence,} which just ensures that each pair of curves $\va \subset \valpha$ and $\vb \subset \vbeta$ are coherent.
Given unoriented multicurves $\alpha$ and $\beta$, we say they are {\em coherently orientable} if they can be oriented to be coherent.

We recall that a pair of multicurves {\em fills} $S$ if the complement of their union is a collection of disks.
Filling pairs are important for a number of reasons, not least because they can be used to build explicit examples of pseudo-Anosov homeomorphisms \cite{PennerpA}.

\begin{theorem}\label{mainthm:parallel}
Let $(\alpha, \beta)$ be a pair of multicurves on $S$. The following are equivalent:
\begin{enumerate}
    \item There exists a translation surface on which $\alpha$ and $\beta$ are both realized as parallel multicylinders.
    \item There is a coherently orientable filling pair $(\alpha', \beta')$ with $\alpha' \supset \alpha$ and $\beta' \supset \beta$.
    \item The multicurves $\alpha$ and $\beta$ are coherently orientable {\em and} no curve of $\alpha$ separates $S \setminus \beta$ and no curve of $\beta$ separates $S \setminus \alpha$ (in particular, no curves of $\alpha$ and $\beta$ are isotopic).
\end{enumerate}
\end{theorem}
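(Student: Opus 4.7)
The plan is to prove the cycle $(1) \Rightarrow (3) \Rightarrow (2) \Rightarrow (1)$.

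For $(1) \Rightarrow (3)$, the coherence assertion follows from the lemma cited in the introduction: applied to each pair $(a,b) \in \alpha \times \beta$, it gives pairwise coherence, and then orienting every curve of $\alpha$ (resp.\ $\beta$) by its common slope vector makes all signed intersections agree in sign, upgrading this to full coherence of the multicurves. For the non-separation conclusion, suppose for contradiction that some $a \in \alpha$ separates $S \setminus \beta$. Then there is a compact subsurface $U \subseteq S$ with $a \subseteq \partial U \subseteq a \cup \beta$, yielding a homology relation $[a] = \sum_{b \in T} \epsilon_b [b]$ in $H_1(S; \ZZ)$ for some $T \subseteq \beta$ and signs $\epsilon_b \in \{\pm 1\}$. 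Integrating $\omega$ against both sides, the left-hand side is a positive real multiple of the direction vector of the $\alpha$-cylinders, while the right-hand side is a real multiple of the direction vector of the $\beta$-cylinders; when the two multicylinders have genuinely distinct slopes, the two direction vectors are linearly independent over $\RR$, so both sides must vanish, contradicting the positivity of cylinder length.

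For $(3) \Rightarrow (2)$, I would extend $(\alpha,\beta)$ iteratively. Cut $S$ along $\beta$ to form a surface-with-boundary $S_\beta$; then $\alpha$ becomes a non-separating multi-arc system in $S_\beta$ (this is a reformulation of the non-separation hypothesis). By standard surface topology, any non-separating arc system can be extended to one that cuts the ambient surface into disks by adding further non-separating arcs; each added arc closes up across $\beta$ to a new simple closed curve on $S$, which we incorporate into $\alpha'$. Running the symmetric construction on $S_\alpha$ enlarges $\beta$ to $\beta'$. Coherent orientability is preserved at each stage because each new arc, with endpoints on $\beta$-boundary circles, admits a canonical orientation making all of its boundary-signs agree with the coherent signs already present.

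For $(2) \Rightarrow (1)$, I would construct the translation surface directly. Fix a coherent orientation and designate $\alpha'$ as horizontal and $\beta'$ as vertical. Each complementary disk has a boundary alternating between oriented arcs of $\alpha'$ and $\beta'$; coherence forces every corner of every disk to be a right-angle turn of $\pm \pi/2$, so each boundary is combinatorially a rectilinear polygon. It remains to choose the lengths of the $\alpha'$- and $\beta'$-arcs so that (a) each disk closes up as a genuine rectilinear polygon in $\RR^2$, and (b) the arc-lengths along each $\alpha'$- or $\beta'$-curve sum to a common circumference. This is a finite-dimensional homogeneous linear system in positive parameters; a Farkas-type cone argument (or, alternatively, an iterated application of Fu's flat grafting construction) produces strictly positive solutions precisely because the pair is filling and coherent. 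Gluing the resulting Euclidean polygons along their identified edges yields a translation surface realizing both $\alpha$ and $\beta$ as parallel multicylinders.

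The main obstacle I expect is $(3) \Rightarrow (2)$: the topological extension must simultaneously preserve coherence \emph{and} the non-separation properties at each stage, and the delicate point is showing that a non-separating arc admitting a coherent orientation always exists until the extension is complete. Careful bookkeeping of the signs at every boundary endpoint along $\beta$ (and later along $\beta'$) is what drives this step.
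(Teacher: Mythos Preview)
Your cycle is the same as the paper's, and the easy implications are essentially right, but the substantive step $(3)\Rightarrow(2)$ is not yet a proof.

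For $(2)\Rightarrow(1)$ you are working too hard. Once you have a coherent filling pair $(\valpha',\vbeta')$, make every complementary square a \emph{unit} square with $\valpha'$ horizontal and $\vbeta'$ vertical; coherence forces top edges to glue to bottom edges and right to left, so this is already a translation surface. No Farkas argument or grafting is needed; this is exactly the Thurston--Veech construction (Construction~\ref{construction:tv}, Lemma~\ref{lemma:coherence-tv}).

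For $(1)\Rightarrow(3)$ your holonomy argument is essentially Lemma~\ref{lemma:obstruction}. Note, though, your caveat ``when the two multicylinders have genuinely distinct slopes'': if $\alpha$ and $\beta$ happen to be disjoint, the two period rays can coincide and the linear-independence step fails. The paper sidesteps this by first passing through (2): a period-coordinate deformation (Lemma~\ref{lemma:tv-directional-equivalence}) moves $(X,\omega)$ to a nearby square-tiled surface, giving a coherent filling extension $(\alpha',\beta')$; one then argues purely topologically that any $a$ separating $S\setminus\beta$ would force some curve of $\beta'$ to cross $a$ twice with opposite signs (Corollary~\ref{cor:top version of obstruction}).

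For $(3)\Rightarrow(2)$ your reformulation is inaccurate and the key mechanism is missing. Curves of $\alpha$ disjoint from $\beta$ (the \emph{singletons}) remain closed curves in $S_\beta$, not arcs, so hypothesis~(3) does not say ``$\alpha$ becomes a non-separating arc system in $S_\beta$''; it only says each singleton is individually non-separating there. More seriously, a generic non-separating arc extending the system need not close up to a curve coherent with $\vbeta$: if both endpoints land on the same side of some curve of $\vbeta$, the closed-up curve is forced to cross $\vbeta$ with both signs, and no orientation repairs this. You flag exactly this as the ``delicate point,'' but the proposal contains no mechanism for producing such arcs.

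The paper avoids generic arcs altogether. First, it orients the singletons: by hypothesis~(3) the dual graph of $\singletons_\beta(\alpha)$ on $S\setminus\beta$ is $2$-edge-connected, and Robbins's theorem provides a strongly connected orientation. That strong connectivity is then used to build, for each singleton, a new curve of $\vbeta$ crossing it coherently (Lemmas~\ref{lem:arcs_on_subsurfs}--\ref{lem:addcurve_removesingleton}). Once no singletons remain, every boundary component of every complementary region contains a segment of $\vbeta$; the paper then extends $\vbeta$ by taking \emph{connected sums} $\vb +_\varepsilon c$ of existing $\vbeta$-curves with essential curves or arcs in the complement (Lemma~\ref{lem:nosingletons_extendtofill}). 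The point of the connected sum is that the new curve runs parallel to $\vb$ wherever it meets $\valpha$, so it inherits coherence automatically. This inheritance is the idea your sketch lacks.
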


That (2) implies (1) is well known, and that (1) implies (2) is not hard (see Lemma \ref{lemma:tv-directional-equivalence}). Our main contribution, then, is showing the equivalence of these conditions with condition (3), which is easily checkable given two multicurves $\alpha$ and $\beta$.
In fact, we prove a stronger theorem in which the orientations of $\alpha$ and $\beta$ may be prescribed and where the core curves of the realizing multicylinders must all point the same direction. See Theorem \ref{theorem:exttv_oriented}.

Note that condition (3) implies that each curve in $\alpha \cup \beta$ must be non-separating.
Thus, as a special case of Theorem \ref{mainthm:parallel}, we have the following corollary.

\begin{corollary}\label{corollary_to_mainthm:exttv_unoriented}
Two non-separating curves $a$ and $b$ are jointly realizable as cylinders on a translation surface if and only if they are coherently orientable.
\end{corollary}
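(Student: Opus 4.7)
The plan is to deduce the corollary from Theorem \ref{mainthm:parallel} and the lemma cited in the introduction (Lemma \ref{1 parallel implies coherently orientable}) by splitting on whether $a$ and $b$ intersect. The forward direction is nearly immediate: if $a$ and $b$ are simultaneously realized as cylinders on a translation surface and $a \cap b = \emptyset$, coherent orientability is vacuous, while if $a \cap b \neq \emptyset$ then the two cylinders intersect, and Lemma \ref{1 parallel implies coherently orientable} (as recalled in the introduction) forces some orientation of the pair to have equal geometric and algebraic intersection numbers, which is precisely coherent orientability.

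For the converse, I assume $a$ and $b$ are non-separating and coherently orientable and again split into cases. If $a \cap b = \emptyset$, then $a \cup b$ is a multicurve each of whose components is non-separating, so the easy single-multicurve realization recalled in the introduction (using \cite{ZorJS} or \cite[\S 6.3]{strata2}) produces a translation surface on which $a$ and $b$ appear as disjoint cylinders. If $a \cap b \neq \emptyset$, I apply Theorem \ref{mainthm:parallel} with $\alpha = \{a\}$ and $\beta = \{b\}$: because $a$ meets $b$, the curve $a$ is not contained in $S \setminus b$, so the condition ``no curve of $\alpha$ separates $S \setminus \beta$'' holds vacuously, and symmetrically for the other side. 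Thus condition (3) of Theorem \ref{mainthm:parallel} collapses to the coherent orientability we have assumed, and its condition (1) delivers a translation surface realizing $a$ and $b$ as (parallel) cylinders.

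The only step beyond citing Theorem \ref{mainthm:parallel} is this small observation: once the two curves meet, the separation clauses in (3) are vacuous, and when they are disjoint the easy multicurve realization already suffices. All substantive work has been absorbed into the main theorem, so no further obstacle arises in proving the corollary.
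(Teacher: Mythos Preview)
Your proof is correct and follows the paper's intended route: the paper simply remarks that the corollary is ``a special case of Theorem~\ref{mainthm:parallel},'' and you make this precise by specializing to $\alpha=\{a\}$, $\beta=\{b\}$ and observing that when $a$ and $b$ intersect the separation clauses of condition~(3) are vacuous. Your explicit treatment of the disjoint case via the single-multicurve realization from the introduction is a small elaboration the paper leaves implicit (there coherent orientability is automatic and the realization is immediate), but this does not constitute a different approach.
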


Two multicurves both being realized as parallel multicylinders is a very strong condition, and one could weaken what it means for $\alpha$ and $\beta$ to be realized.
Say that a triple of unoriented multicurves $(\gamma_1,\gamma_2,\gamma_3)$ is coherently orientable if we can assign orientations so that simultaneously every pair $(\vgamma_i,\vgamma_j)$ is coherent.

\begin{theorem}
\label{maintheorem:totally-coherent-realizable}
Let $(\alpha, \beta)$ be a pair of multicurves on $S$.
Then there exists a translation surface on which $\alpha$ is realizable as a parallel multicylinder and $\beta$ is realizable as an arbitrary multicylinder
if and only if the following conditions hold:
\begin{enumerate}
    \item There exists a filling pair $(\alpha',\gamma)$ such that $\alpha \subset \alpha'$ and $(\alpha',\gamma,b)$ is coherently orientable for each curve $b \subset \beta$.
    \item Each curve of $\alpha$ and of $\beta$ is non-separating.
\end{enumerate}
\end{theorem}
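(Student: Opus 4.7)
The plan is to reduce both directions to Theorem~\ref{mainthm:parallel}, with flat grafting used to produce or detect the additional cylinders needed to accommodate $\beta$.

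For sufficiency, I begin with the filling pair $(\alpha', \gamma)$ and observe that pairwise coherent orientability of $(\alpha', \gamma)$ follows by restriction from the triple hypothesis on $(\alpha', \gamma, b)$. Theorem~\ref{mainthm:parallel} applied to $(\alpha', \gamma)$ produces a translation surface $(X_0, \omega_0)$ on which $\alpha'$ is a horizontal parallel multicylinder and $\gamma$ is a parallel multicylinder in some transverse direction, which after an $\mathrm{SL}(2,\mathbb{R})$-action I take to be vertical. Because $(\alpha', \gamma)$ fills $S$, the horizontal and vertical cylinders cut $(X_0, \omega_0)$ into rectangles, and every simple closed curve $b$ has a closed geodesic representative whose holonomy $\hol(b) \in \mathbb{R}^2$ is determined by its signed intersections with $\alpha'$ and $\gamma$, weighted by the cylinder heights. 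The coherent orientability of each $(\alpha', \gamma, b)$ makes all these signs consistent, so $\hol(b)$ has a well-defined slope. I then apply Fu's flat grafting along each $b \subset \beta$ in the direction of $\hol(b)$, iterating over $\beta$, to make each $b$ the core of an embedded Euclidean cylinder. Since flat grafting in a direction $v$ preserves the cylinder structure of all curves in directions transverse to $v$, the horizontal cylinders of $\alpha \subset \alpha'$ persist throughout, yielding the desired translation surface.

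For necessity, suppose $(X, \omega)$ realizes $\alpha$ as a horizontal parallel multicylinder and each $b \subset \beta$ as a Euclidean cylinder in some direction $\theta_b$. Condition (2) is immediate, since embedded cylinders on a translation surface have non-separating core curves. For (1), I extend $\alpha$ to $\alpha'$ by adjoining all horizontal core curves of $(X, \omega)$ (including any $b \subset \beta$ with $\theta_b$ horizontal), possibly after flat grafting along horizontal saddle connections so that the horizontal direction is completely periodic. I then choose a direction $\eta$ transverse to horizontal and to every remaining $\theta_b$ and apply flat grafting in direction $\eta$ until $\eta$ is also completely periodic, obtaining a multicurve $\gamma$ of $\eta$-cylinder core curves. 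The horizontal and $\eta$-cylinder decompositions together cut the surface into rectangles, so $(\alpha', \gamma)$ fills $S$. To verify coherence of each triple $(\alpha', \gamma, b)$, I orient each curve in its positive direction on the flat surface: at any intersection, the sign of crossing equals $\mathrm{sgn}(\det[v, v'])$ for the two direction vectors, a quantity depending only on the directions and not on the location. Hence each pair has constant signed intersection, making the triple coherently orientable.

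The main obstacle is the careful management of flat grafting. In sufficiency, we must verify that grafting cylinders for successive $b \subset \beta$ can be carried out compatibly, preserving the horizontal cylinders of $\alpha$ and any cylinders produced at prior steps, and in particular that $\hol(b)$ is computed correctly after earlier graftings have possibly altered cylinder heights. In necessity, we must check that the flat graftings used to make the horizontal and $\eta$-directions completely periodic do not disturb the isotopy classes of $\alpha$ or $\beta$, and that the resulting $\gamma$ indeed fills together with $\alpha'$. Both arguments rely on the principle that flat grafting in a direction $v$ leaves cylinders transverse to $v$ intact; formalizing this principle and threading it through multiple graftings is the technical heart of the proof.
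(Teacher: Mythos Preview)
Your sufficiency argument rests on the principle that ``flat grafting in a direction $v$ preserves the cylinder structure of all curves in directions transverse to $v$,'' but this is backwards. Horizontal grafting along a curve $c$ inserts horizontally-foliated parallelograms and therefore preserves the \emph{horizontal} foliation; a cylinder in some other direction survives only if its core curve is disjoint from $c$ (Lemma~\ref{local_procedure}). If you graft along $b_1$ in direction $\hol(b_1)$ as you propose, the horizontal $\alpha$-cylinders will in general be destroyed, since $b_1$ typically crosses them. The paper fixes the grafting direction to be horizontal for \emph{every} $b_i$: this keeps $\alpha$ horizontal throughout, while previously created $\beta$-cylinders survive each later grafting because the $b_i$ are pairwise disjoint. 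You also omit the check that horizontal grafting along $b$ actually produces a translation surface; this requires the geodesic representative of $b$ to be a concatenation of saddle connections confined to a closed half-plane (Lemma~\ref{Fu properties of grafting}(1)), which the paper obtains from the coherence hypothesis via Corollary~\ref{cor:realize directional implies angles} applied in both the horizontal and vertical directions.

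Your necessity argument has a different problem: you want to graft ``along horizontal saddle connections'' and then ``in direction $\eta$'' until those directions become completely periodic, but grafting is a surgery along a closed curve, not along a saddle connection or a foliation, and there is no result asserting that it forces periodicity in a prescribed direction. Moreover, any such grafting would have to avoid disturbing the existing $\beta$-cylinders, which you have not arranged. The paper sidesteps this entirely: after normalizing so that $\alpha$ is horizontal and shearing so that each $\hol(b_j)$ lies in $[0,\tfrac{\pi}{2})\cup[\pi,\tfrac{3\pi}{2})$, it perturbs in period coordinates (keeping the imaginary parts of the $\alpha$-periods zero) to a nearby square-tiled surface, on which the horizontal and vertical cylinder cores give the required filling pair $(\alpha',\gamma)$. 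The coherence of each triple then follows because every $b$ still has slope in a single quadrant on the perturbed surface.
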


It remains an open question to give a characterization of condition (1) solely in terms of the combinatorial data of $\alpha$ and $\beta$ on $S$. In particular, is (1) equivalent to being coherently orientable?
\medskip

At the weakest end of the spectrum, one could drop all assumptions on parallelism and ask for $\alpha$ and $\beta$ to be realized as any sort of multicylinders on some translation surface. In this setting:
\begin{enumerate}
    \item Coherence is not necessary (Example \ref{ex:multicyl_notcoherent}), 
    \item Pairwise coherence is necessary, yet 
    \item Pairwise coherence is not sufficient (Example \ref{ex:pairwisecoherent_notrealizable}).
\end{enumerate}

A further open question is characterizing all the necessary and sufficient conditions to realize any pair of multicurves as arbitrary multicylinders on some translation surface.

\subsection{Outline}
In Section \ref{sec:background}, we collect some relevant background about cylinders on translation surfaces.
One of the most important definitions in this section is the ``Thurston--Veech construction'' that connects filling pairs of multicurves with translation surfaces.
In Section \ref{sec:coherent-filling-pairs-of-multicurves}, we prove Theorem \ref{mainthm:parallel} and its generalization, Theorem \ref{theorem:exttv_oriented}, using the Thurston--Veech construction and surface-topological arguments.
Theorem \ref{maintheorem:totally-coherent-realizable} is proven in Section \ref{sec:total-coherence-and-grafting} using an explicit surgery called ``horizontal grafting'' which deforms a translation surface enough to realize certain concatenations of saddle connections as cylinders.
Finally, in Section \ref{sec:pairwise-coherent-multicurves-as-cylinders}, we give a number of examples that show that the question of realizing a pair of multicurves as arbitrary multicylinders is more subtle than just (pairwise) coherence.

\subsection{Acknowledgements}
This project began at the Yale SUMRY REU program in summer 2021, and we would like to thank all of the participants of SUMRY for their support and attention to our project.
We would also like to thank Marissa Loving, Nick Salter, and Jane Wang for helpful conversations as well as the rest of the visitors to the SUMRY program for contributing ideas and enthusiasm.
AC acknowledges financial support from NSF grants DMS-2005328 and DMS-2202703.

\section{Cylinders on translation surfaces}\label{sec:background}
In this section, we discuss the basics of cylinders on translation surfaces.
We first record a number of useful properties concerning intersections and coherence between geodesics on a translation surface.
We then recall the Thurston--Veech construction, which realizes a filling pair of multicurves as multicylinders on a (half)-translation surface.
This section also contains a proof that if this filling pair is coherent, then the construction yields a translation surface.

\subsection{Translation surfaces}
A \textit{translation surface} is a collection of compact polygons in $\mathbb{C}$ in which each side (vector in $\mathbb{R}^2 \cong \mathbb{C}$) is identified with another by translation, considered up to cut-and-paste equivalence. Performing the gluings constructs a topological surface $S$, which inherits from $\CC$ a metric that is Euclidean everywhere except on a finite set of cone points.
Equivalently, a translation surface is a Riemann surface $X$ equipped with a holomorphic abelian differential $\omega$. The cone points of the flat metric correspond to the zeros of $\omega$. We denote a translation surface by the pair $(X,\omega)$.

The equivalence can be seen as follows. Let $\Sigma = P_1,...,P_n$ be the zeros of $\omega$.
A geodesic arc whose endpoints are in $\Sigma$ and is otherwise disjoint from $\Sigma$ in its interior is called a \textit{saddle connection}. 
For an oriented curve or arc $\va$ on $S$, we denote its \textit{period} on the translation surface $(X, \omega)$ by
\[\hol(\va):= \int_{\va} \omega.\]
equivalently, the period is the vector obtained by connecting the images of $\va(0)$ and $\va(1)$ under the developing map of the flat metric.
When $a$ is unoriented, its period is defined up to multiplication by $-1$.
  
Choose some basis $c_1,\ldots,c_{2g+n-1}$ of $H_1(X,\Sigma; \mathbb{Z})$ consisting of saddle connections. Cutting along the $c_i$'s, we obtain a collection of polygons where each of the two sides corresponding to one of the $c_i$ has direction vector given by the complex number $\hol(c_i)$.
Moreover, the vertices of these polygons glue up to be $\Sigma$, and the local horizontal vector field on the surface determined by $\omega$ corresponds to the horizontal vector field on $\mathbb{C}$.
We note that this construction is indeed independent on our choice of basis for $H_1(X,\Sigma, \mathbb{Z})$ because if two curves or arcs $\gamma_1$ and $\gamma_2$ are homologous, then $\hol(\gamma_1) = \hol(\gamma_2).$

A \emph{stratum} of translation surfaces is the subset of all translation surfaces with a fixed number and angle of cone points.
The {\em periods} of a translation surface are the numbers $\{\hol(c_i)\}$ defined above.
Adjusting the periods slightly corresponds to adjusting the edges of the polygons defining our translation surface, and doing this we still obtain obtain a translation surface in the same stratum.
In this way the periods of a translation surface establish local (orbifold) {\em period coordinates} for strata modeled by $H^1(X,\Sigma; \mathbb{C})$.

Similarly, a \textit{half-translation surface} is a collection of polygons in $\mathbb{C}$ in which each side is identified with another by translation with possibly a rotation by $\pi$, considered up to cut-and-paste equivalence. Equivalently, a half-translation surface is a Riemann surface $X$ equipped with a quadratic differential $q$. Sometimes, $q$ may be the square of an abelian differential (i.e., the sides of the polygon may all be glued by translation), so one can regard translation surfaces as a subset of half-translation surfaces.
Strata of quadratic differentials also have period coordinates, but since we will not use them, we omit this discussion.

We refer the reader to \cite{Zsurvey} for a more thorough background on (half-)translation surfaces, period coordinates, and strata.

\subsection{Geodesics and multicylinders}
Two curves on a surface are said to be in {\em minimal position} if they realize the geometric intersection number for their isotopy classes. Throughout the paper, unless otherwise stated, we will assume that all curves are realized in minimal position.

A(n isotopy class of a) curve $c$ on a (half-) translation surface $(X,\omega)$ is {\em realizable as a cylinder} if $c$ is isotopic to the core curve of some embedded Euclidean cylinder on $(X,\omega)$.
In other words, there is some geodesic representative of $c$ that has constant slope on $(X,\omega)$ and does not contact a cone point.
In fact, the family of geodesic representatives sweeps out the embedded cylinder, all have the same length, and all representatives except for the two boundary components of the cylinder are nonsingular.
The circumference of the embedded cylinder is equal to the magnitude of $\hol(c)$ (equipped with either orientation).

\begin{definition}\label{def:multicyl}
A {\em multicylinder} on a (half-)translation surface $(X,\omega)$ is a collection of cylinders whose core curves are all disjoint and non-isotopic (for some, hence any, choice of nonsingular core curves).
A multicylinder is a {\em parallel multicylinder} if each of the core curves of its constituent cylinders has the same slope.
\end{definition}

More generally, every (half-)translation surface has a circle's worth of (singular) {\em directional foliations} by parallel lines of the same slope.
The directional foliations of translation surfaces are always orientable, while those for half-translation surfaces are not necessarily so.
If the directional foliation of a (half-)translation surface $(X,\omega)$ is a union of parallel multicylinders glued along parallel saddle connections, then $(X,\omega)$ is said to be {\em periodic} in that direction.

If $\alpha$ is a parallel multicylinder and $b$ is a cylinder on a translation surface, then the intersection pattern is constrained.

\begin{lemma}
\label{1 parallel implies coherently orientable}
Given a parallel multicylinder $\alpha$ and arbitrary multicylinder $\beta$ on a translation surface, their core multicurves are coherently orientable.
\end{lemma}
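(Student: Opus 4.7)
The plan is to exhibit orientations on the components of $\alpha$ and $\beta$ under which every intersection point is positively signed; this immediately forces $\algintersect(\valpha,\vbeta) = \geomintersect(\valpha,\vbeta)$, which is coherence.

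First I would pass to the geodesic core curve representatives of each cylinder and let $v \in \RR^2$ denote the common tangent vector of the cores of $\alpha$, which is well-defined (up to sign) since $\alpha$ is a parallel multicylinder. Orient each component $\va_i \subset \valpha$ to have tangent $v$. For each cylinder $b_j \subset \beta$ with direction $w_j$, if $w_j$ is not parallel to $v$ then orient $\vb_j$ so that $\det(v, w_j) > 0$; if $w_j$ is parallel to $v$, assign any orientation. At any transverse intersection of $\va_i$ and $\vb_j$, the tangent vectors in the local translation chart are exactly $v$ and $w_j$, so the signed intersection index is $\operatorname{sign}(\det(v, w_j)) = +1$ by construction. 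In the parallel case $w_j \parallel v$, the curves $b_j$ and each $a_i$ are parallel closed geodesics in the flat metric and so are either equal or disjoint; they cannot be equal because distinct cylinders have non-isotopic cores (Definition \ref{def:multicyl}), hence they are disjoint and contribute nothing to either intersection number.

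Summing over all intersection points yields $\algintersect(\valpha,\vbeta) = \geomintersect(\valpha,\vbeta)$, so the chosen orientations realize coherence. The one technical step, rather than a genuine obstacle, is to verify that these geodesic representatives actually achieve minimal position, so that the transverse flat-geometric intersections genuinely compute $\geomintersect$. I would handle this via the standard Bigon Criterion together with a Gauss--Bonnet computation: a geodesic bigon between $a_i$ and $b_j$ with interior angles $\theta_1, \theta_2 \in (0,\pi)$ would satisfy $\int_D K \, dA = \theta_1 + \theta_2 > 0$, contradicting the fact that the curvature integral over any disk on a translation surface whose boundary avoids cone points is nonpositive (all conical singularities carry negative concentrated curvature). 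Since cylinder cores may be perturbed within their cylinders to nonsingular geodesics away from $\Sigma$, the Gauss--Bonnet argument applies and the minimality follows.
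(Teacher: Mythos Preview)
Your proof is correct and follows essentially the same approach as the paper: orient all $\va_i$ in the common direction $v$ and each $\vb_j$ so that $\det(v,w_j)>0$, forcing every intersection to be positive. The only difference is that you supply an explicit Gauss--Bonnet/bigon argument for minimal position of the cylinder cores, which the paper leaves implicit; this extra care is sound but not a genuinely different strategy.
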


In particular, any two cylinders on a translation surface must have coherently orientable core curves.

\begin{proof}
Let $\alpha =a_1 \cup  \dots \cup a_n$ and $\beta =b_1 \cup \dots \cup b_m$.
Rotating the surface as necessary, we may assume that $\alpha$ is horizontal. Orient the core curves of $\alpha$ to point in the $+x$ direction.

The core curve of each $b_i$ has constant slope since it is realized as a cylinder. 
If some $b_i$ does not intersect $\alpha$, then $\geomintersect(\alpha, b_i) = \abs{\algintersect(\alpha, b_i)} = 0$ and its orientation does not matter.
If $b_i$ intersects $\alpha$, then it cannot be horizontal, and so we assign an orientation to $b_i$ so that its geodesic representatives points with angle in $(0, \pi)$.
Doing this for all $b_i$, we have thus oriented $\beta$ so that all intersections with $\alpha$ are positive, making the pair of oriented multicurves coherent.
\end{proof}

\subsection{Realizing topological multicurves as cylinders} \label{subsection_realization}
Throughout the rest of this paper, we will be interested in when pairs of multicurves on a topological surface $S$ of genus $g$ can be realized as a pair of multicylinders on some (half-)translation surface $(X,\omega)$.
However, there is no canonical way of identifying the curves on $S$ with those on $(X,\omega)$.
In order to make this identification, we must fix a marking.

A \emph{marking} of a (half-)translation surface $(X,\omega)$ is a choice of homeomorphism $\marking$ from $S$ to $(X,\omega)$, considered up to isotopy.
\footnote{
If the translation surface has $n$ cone points, one could also consider a marking $\varphi': S_{g,n} \to X$ that records the location of the cone points. With this definition of marking, we cannot isotope curves over cone points because we treat cone points as punctures.
This gives rise to a different notion of realizability as a multicylinder which depends heavily on the position of curves vis-{\`a}-vis cone points. 
See also Remark \ref{rmk:graftingandmarking}.
}
Markings allow us to compare curves on different flat surfaces. Usually one denotes a \emph{marked translation surface} by the triple $(X, \omega, \marking)$. However in this paper, any translation surface $(X,\omega)$ is assumed to be marked, so for simplicity of notation we will usually omit $\marking$ from the triple.
The set of all marked half-translation surfaces is naturally identified with the cotangent bundle of the Teichm{\"u}ller space of $S$, and the set of all marked translation surfaces $(X,\omega,\varphi)$ corresponds to a subbundle of the cotangent bundle.


\begin{definition}\label{def:realize_nonor}
A multicurve $\gamma = c_1 \cup...\cup c_n$ on $S$ {\em can be realized as a multicylinder on a (half-)translation surface} if there exists some marked (half-)translation surface $(X, \omega, \marking)$ so that the curves $\marking(c_i)$ are all isotopic to the core curves of a multicylinder on $(X,\omega)$.
Similarly, we say that $\gamma$ {\em can be realized as a parallel multicylinder} if $\marking(c_i)$ can be isotoped to the core curves of a parallel multicylinder.
\end{definition}

When a multicurve is oriented, we can also introduce a more restrictive criterion for cylindricity that forces the core curves to all point the same way (as opposed to just having the same slope). 

\begin{definition}\label{def:realize_or}
An oriented multicurve $\vgamma= \vc_1 \cup ... \cup \vc_n$ {\em can be realized as a directional multicylinder} on a translation surface if there exists some marked translation surface $(X, \marking)$ and some direction $\theta \in [0, 2\pi)$ so that the curves of $\marking(\vc_i)$ are all isotopic (respecting orientations) to closed nonsingular curves of the directional foliation pointing in direction $\theta$.
\end{definition}

We remark that this definition only makes sense for translation surfaces, as the directional foliations of half-translation surfaces are not always orientable.

Equivalently, Definition \ref{def:realize_or} can also be stated in terms of the periods of the curves of $\vgamma$.
Recall that the {\em period} of any oriented path $p:[0,1] \to X$ on a translation surface is the integral of the differential defining $(X,\omega)$ over the path $p$.
An oriented multicurve $\vgamma$ is then realized as a directional multicylinder on $(X,\omega)$ if and only if it is realized as a parallel multicylinder and the periods of the constituent $\vc_i \subset \vgamma$ are all {\em positive} real multiples of each other.

The property of realizing a given curve as a cylinder is open, as proven by the following Lemma:

\begin{lemma}\label{lem:cylinderspersist}
If a curve $c$ is realized as a cylinder on some (half-)translation surface $(X,\omega)$, then there exists an open neighborhood $N$ of $(X,\omega)$ in 
the ambient stratum of marked (half-)translation surfaces
such that $c$ is realized as a cylinder on every surface $(X', \omega') \in N$.
\end{lemma}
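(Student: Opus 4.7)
The plan is to exploit the polygonal structure of the cylinder together with the fact that periods of saddle connections in a triangulation provide local coordinates on the stratum. I will sketch the translation case; the half-translation case is analogous, e.g.\ after passing to the orientation double cover.

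After rotating I may assume $c$ is realized as a horizontal cylinder $C$ of width $w = \hol(c) > 0$ and height $h > 0$, whose interior is free of cone points. First I would pick a triangulation of $(X,\omega)$ by saddle connections that includes the horizontal saddle connections making up $\partial C$, a ``crossing'' saddle connection $d$ from a cone point of $\partial^+ C$ to one of $\partial^- C$, and additional diagonals that triangulate both the polygon obtained by cutting $C$ along $d$ and the complement of $C$ in $X$. The periods of these edges give local coordinates on the ambient stratum near $(X,\omega)$, so for a sufficiently small neighborhood $N$ each edge persists as a saddle connection on every $(X',\omega') \in N$ with the original combinatorial gluing data intact. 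Via the marking, $C$ then corresponds to a region $C' \subset X'$ that is combinatorially an annulus triangulated by slightly-perturbed flat triangles, with the cone points of $(X',\omega')$ meeting $\overline{C'}$ lying only on $\partial C'$.

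The remaining step is to produce a non-singular closed geodesic in $C'$ representing $c$, since any such geodesic is automatically the core of a cylinder on $(X',\omega')$. The plan is to fix a point $p$ on the horizontal core $c_0$ of $C$ at height $h/2$, and on each $(X',\omega') \in N$ consider the straight line $\ell$ through (the image of) $p$ in the direction of $\hol(c)$ as computed on $(X',\omega')$. Because $\ell$ is homologous to $c$, traversing $\ell$ for length $|\hol(c)|$ closes it up into a loop in the isotopy class of $c$. The main obstacle will be verifying that for $N$ small enough $\ell$ stays in the interior of $C'$; this should follow from the observation that each boundary saddle connection of $\partial C'$ has period close to a positive horizontal vector, so $\partial C'$ is confined to a thin strip near $\partial C$ while $\ell$ remains uniformly close to $c_0$. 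Granted this, $\ell$ avoids every cone point of $(X',\omega')$ (since they lie on $\partial C'$) and is therefore the desired non-singular closed geodesic representing $c$.
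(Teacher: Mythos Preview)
Your proposal is correct and follows the same underlying idea as the paper: use period coordinates (equivalently, small deformations of the defining polygons) to argue that the nonsingular geodesic representative of $c$ persists on nearby surfaces. The paper's version is much terser---it simply notes that the core geodesic sits at positive distance $d$ from the cone points and that a small perturbation keeps the cone points at distance at least $d/2$---whereas you spell out the mechanism more carefully by choosing a triangulation adapted to the cylinder, tracking the annular region $C'$ on the perturbed surface, and explicitly producing the closed geodesic in the direction of $\hol(c)$. Your extra care buys a cleaner justification of \emph{why} there is a straight closed loop on $(X',\omega')$ in the class of $c$ (the paper leaves this implicit), at the cost of some additional setup; but the two arguments are really the same proof at different levels of detail.
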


\begin{proof}
If $c$ is realized as a cylinder on $(X,\omega)$, then it has a nonsingular geodesic representative of constant slope.
There is some minimum distance, $d$, between the geodesic and any of the cone points in $\Sigma$.
Varying slightly in period coordinates (equivalently, varying the edges of the defining polygons), $(X,\omega)$ can then be deformed into a new surface $X'$ such that each point of $\Sigma$ remains at least $d/2>0$ away from $c$, and on this surface, $c$ is a geodesic line of constant slope. Thus, $c$ is still realizable as a cylinder on $(X', \omega')$. 
\end{proof}

The same argument shows that realizing a multicurve as a multicylinder is also an open condition, but realizing it as a parallel or directional multicylinder is not. We say more to this effect in Lemma \ref{lemma:tv-directional-equivalence}.

\subsection{The Thurston--Veech construction}
In this subsection, we recall a method of finding a marked translation surface which will realize certain pairs of multicurves $\valpha$ and $\vbeta$ as directional multicylinders.

A pair of multicurves $(\alpha, \beta)$ \textit{fills} $S$ if $S \setminus (\alpha \cup \beta)$ is a disjoint union of open disks. It is a standard result that a pair of multicurves $(\alpha, \beta)$ fills $S$ if and only if for every curve $c \subset S$ that is not null-homotopic, $\geomintersect(c, \alpha) + \geomintersect(c, \beta) > 0$.
In the case that $(\alpha,\beta)$ is a filling pair of multicurves, Thurston and Veech independently identified a now standard construction which realizes $\alpha$ and $\beta$ as cylinders on a half-translation surface.
Heuristically, one can think of this procedure as thickening $\alpha$ and $\beta$ into cylinders while contracting the remaining disks into cone points \cite{NSnotes}. A more formal statement of the construction is given below. 

\begin{construction}[Thurston--Veech Construction] 
\label{construction:tv}
Let $(\alpha, \beta)$ be a filling pair of non-empty multicurves on $S$ and think of $\alpha \cup \beta$ as an embedded graph, with vertices corresponding to points of intersection and edges corresponding to strands of $\alpha$ and $\beta$ running between intersection points.
Then the dual graph of $\alpha \cup \beta$ partitions $S$ into squares.
Considering these to be flat unit squares, we obtain a \emph{square-tiled surface} on which the thickened curves of $\alpha$ comprise the horizontal cylinders and the thickened curves of $\beta$ comprise the vertical cylinders. See Figure \ref{fig:TVexample}. Because $\alpha$ is always horizontal and $\beta$ is always vertical, edges of squares are only identified by translations and rotations by $\pi$. Thus, the resulting surface $\TV(\alpha,\beta)$ is a half-translation surface.
\end{construction}

We observe that by construction, any surface $\TV(\alpha,\beta)$ is periodic in both the horizontal and vertical directions and is tiled by squares. 
Such surfaces are sometimes called ``origamis'' in the literature.

\begin{figure}
    \centering
    \def\svgwidth{250 pt}
    \includegraphics[width=.6\textwidth]{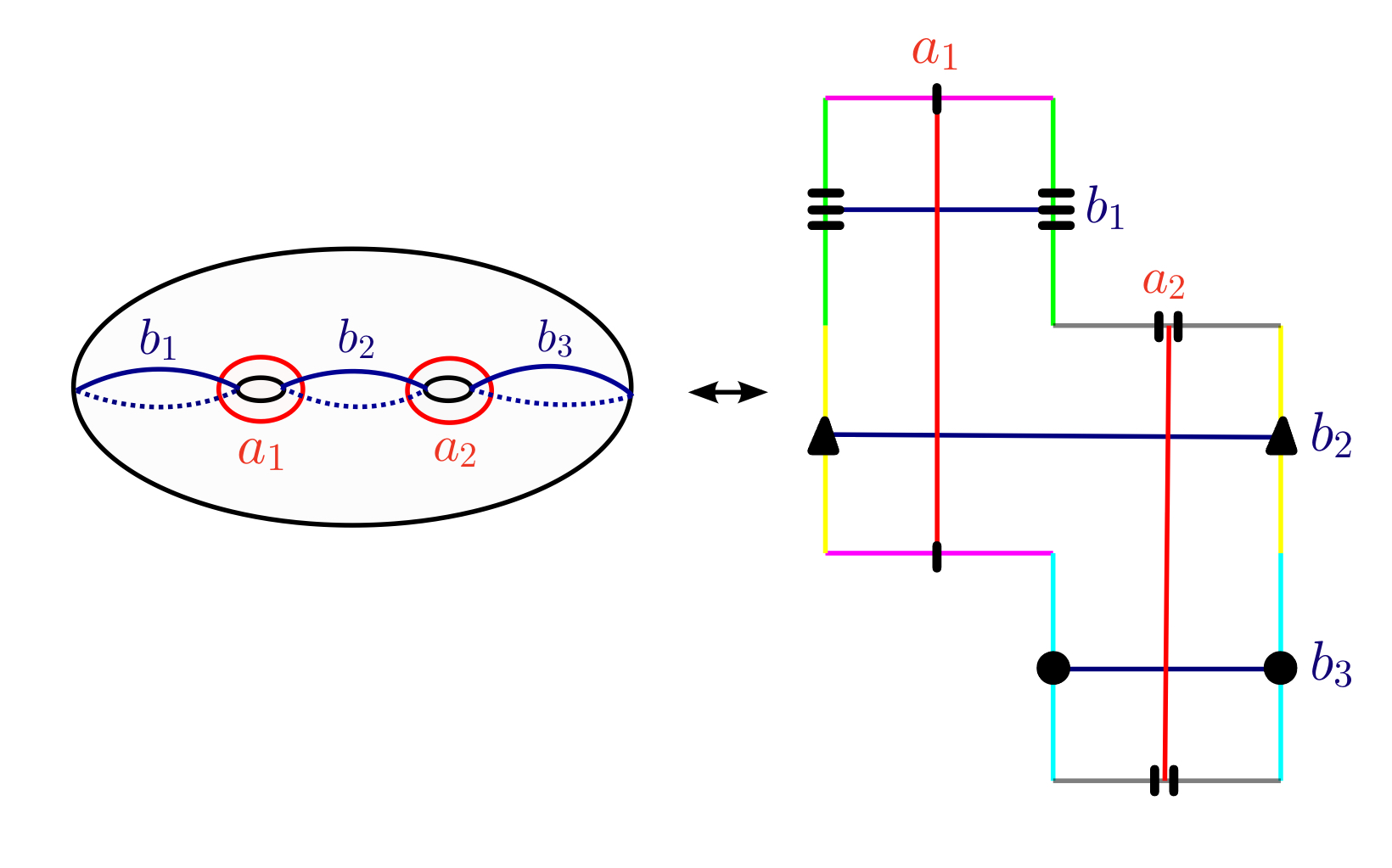}
    \caption{An example of the Thurston--Veech construction applied to the pair of multi-curves $\alpha = a_1 \cup a_2$ and $\beta = b_1 \cup b_2 \cup b_3$.}
    \label{fig:TVexample}
\end{figure}

In this paper, we restrict our attention to translation surfaces because half-translations are too flexible, as demonstrated in Lemma \ref{lem:STScyls} below.

Recall that a {\em pants decomposition} of $S$ is a multicurve $\alpha$ so that $S \setminus \alpha$ is a union of three-holed spheres.
An \textit{extension} of a multicurve $\alpha$ on $S$ is a multicurve $\alpha'$ on $S$ which contains $\alpha$ and preserves the orientations, if any, of the curves of $\alpha$. A pair of multicurves $(\alpha, \beta)$ on $S$ is \textit{extended} to a pair of multicurves $(\alpha', \beta')$ such that $\alpha'$ is an extension of $\alpha$ and $\beta'$ is an extension of $\beta$.

\begin{lemma}\label{lem:STScyls}
Every pair of multicurves $\alpha$ and $\beta$ with no curves in common is realized as a pair of parallel multicylinders on some half-translation surface.
\end{lemma}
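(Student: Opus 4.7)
The idea is to reduce to the filling case and apply Construction \ref{construction:tv}. Specifically, I would first extend $(\alpha,\beta)$ to a filling pair $(\alpha',\beta')$ of multicurves with $\alpha\subset\alpha'$, $\beta\subset\beta'$, and with no curve of $\alpha'$ isotopic to any curve of $\beta'$. Applying the Thurston--Veech construction to $(\alpha',\beta')$ then produces a half-translation surface $\TV(\alpha',\beta')$ whose horizontal cylinders have cores $\alpha'$ and whose vertical cylinders have cores $\beta'$. Since $\alpha \subset \alpha'$ and $\beta \subset \beta'$, the multicurves $\alpha$ and $\beta$ are automatically realized as sub-multicylinders of the horizontal and vertical multicylinders, hence as parallel multicylinders on $\TV(\alpha',\beta')$.

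The core of the argument lies in constructing the extension. First, I would complete $\alpha$ to a pants decomposition $\alpha'$ of $S$, choosing each added curve to avoid the finitely many isotopy classes of curves in $\beta$; this is possible because any non-pants complementary region of a multicurve contains infinitely many pairwise non-isotopic essential simple closed curves. Next, I would extend $\beta$ to a multicurve $\beta' \supset \beta$ so that $(\alpha', \beta')$ fills $S$, while maintaining that no curve of $\beta'$ is isotopic to a curve of $\alpha'$. The extension proceeds iteratively: whenever $(\alpha',\beta')$ fails to fill, some complementary region $R$ of $\alpha' \cup \beta'$ is not a disk. Since $R$ lies inside some pair of pants $P_i$ of $S \setminus \alpha'$ (which is planar), its closure is a planar surface with $\chi \le 0$, and hence $R$ has at least two boundary arcs on $\alpha'$. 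Pick a properly embedded arc $\tau \subset R$ connecting two such arcs, and extend $\tau$ to a simple closed curve $c$ on $S$ that is disjoint from $\beta'$ (though crossing $\alpha'$) by routing through the interiors of adjacent pants, where in each pair of pants infinitely many disjoint arcs exist between any two boundary components avoiding any prescribed finite arc family. Adding $c$ to $\beta'$ preserves the multicurve property and strictly decreases the non-negative integer $\sum_R (-\chi(R))$ summed over non-disk complementary regions, so iteration terminates in a filling pair.

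The main obstacle is precisely this extension step, especially verifying that each cutting arc $\tau$ inside a pair of pants can always be completed to a genuine simple closed curve on $S$ disjoint from the current $\beta'$. Once $(\alpha',\beta')$ is a filling pair with no shared curves, the lemma follows immediately from Construction \ref{construction:tv}, with no further analysis of the flat geometry needed.
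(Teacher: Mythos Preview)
Your high-level strategy---extend $(\alpha,\beta)$ to a filling pair $(\alpha',\beta')$ with no shared curves and then apply Construction~\ref{construction:tv}---is exactly the paper's. The divergence is in how the extension is built, and this is where you make life harder than necessary.

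The paper extends \emph{both} multicurves to pants decompositions $P_\alpha \supset \alpha$ and $P_\beta \supset \beta$ with no common curves (first extending arbitrarily, then applying an elementary move to any shared curve; since $\alpha$ and $\beta$ share no curves, any common curve of $P_\alpha$ and $P_\beta$ lies outside at least one of the originals and can be moved). Filling is then a one-line argument: any essential curve disjoint from a pants decomposition is isotopic to one of its curves, so a curve disjoint from $P_\alpha$ lies in $P_\alpha$, hence not in $P_\beta$, hence meets $P_\beta$.

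You instead extend only $\alpha$ to a pants decomposition $\alpha'$ and then try to grow $\beta$ curve by curve until $(\alpha',\beta')$ fills. Your iterative step requires extending an essential arc $\tau$ in a non-disk complementary region to a \emph{simple closed} curve on $S$ disjoint from the current $\beta'$. As you yourself flag, this is the genuine obstacle: you must route through a sequence of adjacent pants, choosing arcs that avoid the existing $\beta'$-arcs \emph{and} close up simply, and you have not justified that this can always be done (nor that the resulting curve is not isotopic to one already in $\beta'$). These issues are fixable with more work, but the whole detour is unnecessary: your own Step~1 argument, applied verbatim with the roles of $\alpha'$ and $\beta$ reversed, already shows that $\beta$ extends to a pants decomposition $\beta'$ avoiding all curves of $\alpha'$. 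That \emph{is} the paper's proof, and it closes the gap immediately.
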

\begin{proof}
Choose two pants decompositions $P_\alpha$ and $P_\beta$ extending $\alpha$ and $\beta$, respectively, that have no curves in common. This can be accomplished, for example, by extending both to arbitrary pants decompositions and then applying an ``elementary move'' to any curves they have in common. See \cite{HT}.

We now claim that $P_\alpha$ and $P_\beta$ jointly fill the surface. Indeed, since pairs of pants have no non-peripheral simple curves, if a curve $c$ does not meet $P_\alpha$, then it must be a curve of $P_\alpha$.
Since $P_\alpha$ and $P_\beta$ share no curves, this means that $c$ must meet $P_\beta$, and thus $(P_\alpha, P_\beta)$ fill the surface.
Applying Construction \ref{construction:tv} therefore produces a half-translation surface on which $P_\alpha$ and $P_\beta$, hence $\alpha$ and $\beta$, are both realized as parallel multicylinders.
\end{proof}

Adding the requirement that a pair of multicurves is coherently orientable guarantees Construction \ref{construction:tv} will yield a translation surface. 

\begin{lemma}
\label{lemma:coherence-tv}
Given a filling pair of multicurves $\alpha$ and $\beta$ on $S$, the surface $\TV(\alpha,\beta)$ is a translation surface if and only if $\alpha$ and $\beta$ are coherently orientable.
\end{lemma}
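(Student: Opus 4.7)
The plan is to translate the condition that $\TV(\alpha,\beta)$ be a translation surface into a combinatorial condition on the gluings of its constituent unit squares. Recall that $\TV(\alpha,\beta)$ is assembled from one unit square $Q_p$ for each $p \in \alpha \cap \beta$, with $\alpha$-strands dictating horizontal adjacencies and $\beta$-strands dictating vertical adjacencies. A priori each edge gluing is either a translation or a rotation by $\pi$, so $\TV(\alpha,\beta)$ is a translation surface precisely when every gluing is a translation.

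For the backward direction, given a coherent orientation of $(\valpha,\vbeta)$, after possibly reversing both orientations I may assume every intersection has positive sign. For each $p$, I place $Q_p$ in the plane so that $\valpha$ at $p$ points along $+x$ and $\vbeta$ at $p$ points along $+y$; this placement is compatible with the ambient surface orientation at $p$ precisely because the intersection there is positive. Since $\valpha$ is consistently oriented along each component curve, any two squares adjacent along an $\alpha$-strand share the same ``$+x$-direction,'' so the right edge of one is glued to the left edge of the other by translation; the symmetric argument handles $\beta$-strands, giving that every edge gluing is a translation.

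For the forward direction, if $\TV(\alpha,\beta)$ is a translation surface, it carries globally well-defined unit horizontal and vertical vector fields. Since $\alpha$ is horizontal by construction, restricting the horizontal field to $\alpha$ gives a non-vanishing tangent vector field and hence an orientation of $\alpha$; similarly the vertical field orients $\beta$. At each $p \in \alpha \cap \beta$ these orientations align with $+x$ and $+y$, a positive basis for the oriented tangent plane, so every intersection has positive sign and $(\valpha,\vbeta)$ is coherent.

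The only point requiring real care is bookkeeping: verifying that an orientation of $(\valpha,\vbeta)$ determines a consistent planar placement for every $Q_p$, and that such placements produce translation gluings iff the orientations are coherent. The key observations are that the placement of $Q_p$ is compatible with the surface orientation exactly when the intersection sign at $p$ is positive, and that consistently oriented component curves force the corresponding strand-adjacencies to be translations rather than $\pi$-rotations; once these are in hand, both implications follow directly.
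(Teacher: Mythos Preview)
Your proof is correct and follows essentially the same approach as the paper. The backward direction is identical in content: place each square so that $\valpha$ runs left-to-right and $\vbeta$ bottom-to-top, and observe that consistent orientations force right edges to meet left edges and top to meet bottom. For the forward direction, the paper simply invokes Lemma~\ref{1 parallel implies coherently orientable} (parallel multicylinders on a translation surface are coherently orientable with any other multicylinder), whereas you unpack this directly by restricting the global horizontal and vertical vector fields of the translation surface to $\alpha$ and $\beta$; this is the same idea, just made self-contained rather than cited.
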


This result has already been shown in \cite{CHANG2021107730, jeffreys2021single}, but we include a short proof for completeness.

\begin{proof}
The forward direction follows from Lemma \ref{1 parallel implies coherently orientable}. 

In the backward direction, suppose that $\valpha$ and $\vbeta$ form a coherent filling pair of multicurves. Then $\TV(\valpha, \vbeta)$ can be represented by a collection of squares, each containing a single intersection of $\valpha$ and $\vbeta$. Rotating the squares so that $\valpha$ runs left-to-right, the coherence condition ensures that, without loss of generality, $\vbeta$ runs bottom-to-top. Therefore, top edges of squares are identified with bottom edges of squares, and right edges are identified with left edges. Thus, the resulting square-tiled surface is a translation surface.
\end{proof}

In particular, this Lemma tells us that if a coherent pair of oriented multicurves $\valpha$ and $\vbeta$ can be extended to a coherent filling pair, then they can be realized as the horizontal and vertical cylinders of some square-tiled surface.
It turns out the converse is also true:

\begin{lemma}
\label{lemma:tv-directional-equivalence}
Let $(\valpha, \vbeta)$ be a coherent pair of oriented multicurves on $S$. Then there exists a translation surface $(X, \omega)$ on which $(\valpha, \vbeta)$ are realized as directional multicylinders if and only if $(\valpha, \vbeta)$ can be extended to a coherent filling pair.
\end{lemma}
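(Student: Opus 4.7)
My plan is to prove the two implications separately. The backward direction is an immediate application of Lemma \ref{lemma:coherence-tv}: given a coherent filling extension $(\valpha', \vbeta')$, the Thurston--Veech surface $\TV(\valpha', \vbeta')$ is a translation surface realizing $\valpha'$ and $\vbeta'$ as horizontal and vertical directional multicylinders, and restricting to $\valpha \subseteq \valpha'$ and $\vbeta \subseteq \vbeta'$ (with orientations preserved by the definition of extension) yields the desired directional realization of $(\valpha, \vbeta)$.

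For the forward direction, suppose $(\valpha,\vbeta)$ is realized on some $(X,\omega)$ with core directions $\theta_\alpha$ and $\theta_\beta$, and after rotating assume $\theta_\alpha = 0$. The central step is to perturb $(X,\omega)$ within its stratum to a nearby translation surface $(X', \omega')$ on which \emph{both} the horizontal direction and a direction $\theta'_\beta$ close to $\theta_\beta$ are completely periodic (every leaf is closed or a saddle connection), while $\valpha$ and $\vbeta$ remain realized as directional multicylinders in these respective directions. The existence of such $(X', \omega')$ is a density statement in period coordinates: inside the real-affine subspace where periods of $\valpha$ are real and periods of $\vbeta$ are real multiples of $e^{i\theta'_\beta}$, the surfaces with rational period data---which force complete periodicity in both the horizontal and $\theta'_\beta$ directions---form a dense subset.

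Given such $(X', \omega')$, define $\valpha'$ to consist of the core curves of all horizontal cylinders (oriented rightward) and $\vbeta'$ of all $\theta'_\beta$-cylinders (oriented in direction $\theta'_\beta$). Then $(\valpha', \vbeta')$ extends $(\valpha, \vbeta)$ with matching orientations. Complete periodicity of the horizontal direction implies the complement of $\valpha'$ deformation retracts to the horizontal critical graph $G_h$, and likewise the complement of $\vbeta'$ retracts to $G_{\theta'_\beta}$. Since $G_h \cap G_{\theta'_\beta}$ consists only of cone points, any simple closed curve disjoint from $\valpha' \cup \vbeta'$ must be null-homotopic, so the pair fills. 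Coherence follows from Lemma \ref{1 parallel implies coherently orientable}, and for small enough perturbation the chosen orientations on $\valpha', \vbeta'$ agree with the coherent orientations extending those of $\valpha,\vbeta$ (since $\theta'_\beta$ lies on the same side of the horizontal axis as $\theta_\beta$). The main obstacle I anticipate is justifying the density claim in the perturbation step; the cleanest route appears to be applying a $\GL(2,\RR)$ element to send $\theta_\beta$ to vertical and then invoking density of square-tiled surfaces within the affine subspace cut out by the linear constraints on periods of $\valpha$ and $\vbeta$.
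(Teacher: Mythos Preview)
Your proposal is correct and lands on essentially the same argument as the paper: the backward direction is Lemma \ref{lemma:coherence-tv}, and for the forward direction the paper does exactly what you suggest in your final sentence---apply an element of $\GL_2(\RR)$ so that $\valpha$ is horizontal and $\vbeta$ is vertical, then use density of rational points in the integral-linear subspace $\{\Im \hol(\va_i)=0,\ \Re \hol(\vb_j)=0\}$ together with Lemma \ref{lem:cylinderspersist} to find a nearby square-tiled surface where $\valpha$ and $\vbeta$ remain (horizontal, resp.\ vertical) cylinders; the full horizontal and vertical cylinder systems then give the coherent filling extension. Your intermediate formulation in terms of a nearby direction $\theta'_\beta$ and critical graphs is not wrong, but it is unnecessary once you normalize $\theta_\beta$ to vertical first---on a square-tiled surface, filling and coherence of the horizontal/vertical systems are immediate, so you can skip the critical-graph argument entirely.
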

\begin{proof}
Suppose that $\valpha = \vec{a}_1 \cup \ldots \cup \vec{a}_m$ and $\vbeta = \vec{b}_1 \cup \ldots \cup \vec{b}_n$ are directional multicylinders on a translation surface $(X, \omega)$.
Postcomposing with an element of $\GL_2(\RR)$ as necessary, we may assume that the geodesic representatives of each curve of $\valpha$ points in the positive $x$-direction and each curve of $\vbeta$ in the positive $y$-direction.
That is, we have that
\begin{equation}\label{eq:reandim}
\hol(\vec{a_i}) \in \RR_{>0} \text{ and }
\hol(\vec{b_j}) \in i\RR_{>0}
\end{equation}
for all $i =1, \ldots, m$ and $j = 1, \ldots, n$.

Choose a local period coordinate chart for the ambient stratum around $(X, \omega)$; then the equations
\[\Im ( \hol(\vec{a_i}) ) = 0 \text{ and }
\Re( \hol(\vec{b_j}) ) = 0\]
cut out a (nonzero) $\RR$-linear subspace $V$ of $H^1(S, \text{Zeros}(\omega); \CC)$.
The positivity conditions of \eqref{eq:reandim} further specify an intersection of open half-spaces inside of $V$, which is non-empty because it contains $(X, \omega)$. Let $U \subset V$ denote this intersection; note that $U$ is a relatively open set inside of $V$.

Since $V$ is cut out by integral-linear equations, rational points $H^1(S, \text{Zeros}(\omega); \QQ \oplus i \QQ)$ are dense in $V$.
Since cylinders persist under small deformations (Lemma \ref{lem:cylinderspersist}), this implies that there is some
\[(X', \omega') \in H^1(S, \text{Zeros}(\omega); \QQ \oplus i \QQ) \cap U\]
on which $\valpha$ and $\vbeta$ remain cylinders.
In particular, $(X',\omega')$ is a square-tiled surface on which $\valpha$ and $\vbeta$ are horizontal and vertical cylinders.
The entire horizontal and vertical multicurves of $(X',\omega')$, oriented in the positive $x$- and $y$-directions respectively, therefore constitute a (coherent) filling pair extending $\valpha$ and $\vbeta$.

The backwards direction is just Lemma \ref{lemma:coherence-tv}.
\end{proof}

\section{Coherent, filling pairs of multicurves}\label{sec:coherent-filling-pairs-of-multicurves}
Given a pair of oriented multicurves $(\valpha,\vbeta)$ on $S$, the strongest sense in which we could realize $\valpha$ and $\vbeta$ as multicylinders on a translation surface is as a directional multicylinders.
The goal of this section is to prove a refinement of Theorem \ref{mainthm:parallel}, characterizing exactly when this is possible.

Let $S$ be a surface and $\vc$ an oriented simple closed curve on it. The surface $S \setminus \vc$ is obtained by removing a small annular neighborhood of $\vc$; we denote the two oriented boundary components of that neighborhood as well as the corresponding boundary components of $S \setminus \vc$ by $\vc_L$ and $\vc_R$, depending on whether the surface is on the left- or right-hand side of the curve. Throughout this section, we assume that none of the constituent curves in a pair of multicurves are isotopic.

\begin{theorem}
\label{theorem:exttv_oriented}
Let $(\valpha,\vbeta)$ be a pair of coherent oriented multicurves such that no two curves of $\valpha \cup \vbeta$ are isotopic.
Then $(\valpha,\vbeta)$ can be simultaneously realized as directional multicylinders on a translation surface if and only if the following holds:
\begin{equation}
    \tag{$\star$}\label{noobstruction}
\parbox{\dimexpr\linewidth-4em}{
\strut
For every multicurve $\vgamma \subseteq \valpha \cup \vbeta$ and every complementary subsurface $W$ of $S \setminus \vgamma$, partition
\[\partial W = A_L \sqcup A_R \sqcup B_L \sqcup B_R\]
where $A_L$ denotes the set of boundary components arising from the left-hand sides of curves of $\valpha$, and so on. Then $A_L \neq \emptyset$ if and only if $A_R \neq \emptyset$, and $B_L \neq \emptyset$ if and only if $B_R \neq \emptyset$.
\strut
} 
\end{equation}
\end{theorem}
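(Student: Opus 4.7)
The plan is to prove two directions and invoke Lemma~\ref{lemma:tv-directional-equivalence} to close the loop: first, that realizability as directional multicylinders implies \eqref{noobstruction}; second, that \eqref{noobstruction} implies $(\valpha,\vbeta)$ extends to a coherent filling pair, which by Lemma~\ref{lemma:tv-directional-equivalence} then yields realizability. The first direction is a short flux calculation, while the second requires a topological construction in each complementary subsurface.

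For the forward direction, let $(X,\omega)$ be a translation surface realizing $(\valpha,\vbeta)$ as directional multicylinders with $\valpha$ horizontal and rightward and $\vbeta$ vertical and upward. For any $\vgamma \subseteq \valpha \cup \vbeta$ and complementary subsurface $W$ of $S \setminus \vgamma$, the image of $\partial W$ in $(X,\omega)$ consists of horizontal and vertical segments (core curves of cylinders in $\vgamma$). Since $\omega$ is holomorphic and $W$ is a $2$-chain, Stokes' theorem gives
\[
\int_{\partial W} \omega \;=\; \int_W d\omega \;=\; 0.
\]
Traversing $\partial W$ with $W$ on the left, boundary components in $A_L$ are traversed in the $+x$ direction and those in $A_R$ in the $-x$ direction, contributing $\pm\abs{\hol(\va)}$ to $\int_{\partial W} dx$, while vertical components contribute $0$ to the real part. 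Vanishing of this integral thus forces $\sum_{\va \in A_L}\abs{\hol(\va)} = \sum_{\va \in A_R}\abs{\hol(\va)}$, and positivity of cylinder circumferences yields $A_L \neq \emptyset$ if and only if $A_R \neq \emptyset$. The imaginary part of the same identity gives the analogous statement for $B_L$ and $B_R$.

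For the backward direction, assume \eqref{noobstruction} and build a coherent filling extension $(\valpha',\vbeta')$ by working inside each complementary subsurface $W$ of $S \setminus (\valpha \cup \vbeta)$. Applying \eqref{noobstruction} with $\vgamma = \valpha \cup \vbeta$ guarantees the labels on $\partial W$ are balanced. Inside $W$ I plan to add vertical arcs oriented upward from components of $A_L$ to components of $A_R$, horizontal arcs oriented rightward from components of $B_R$ to components of $B_L$, and additional closed horizontal or vertical curves as needed to cut $W$ into disks. Gluing across the $\valpha \cup \vbeta$ boundaries assembles these arcs into extended multicurves $\valpha' \supseteq \valpha$ and $\vbeta' \supseteq \vbeta$. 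Because every new horizontal (respectively vertical) curve is oriented uniformly rightward (respectively upward), every crossing in $(\valpha',\vbeta')$ is positive; combined with the coherence of the original pair, this shows $(\valpha',\vbeta')$ is a coherent filling pair.

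The principal obstacle will be executing this backward construction rigorously: producing arcs inside each $W$ that simultaneously cut it into disks, match endpoints consistently across boundaries shared with adjacent subsurfaces so as to assemble into honest simple closed curves, and respect the label constraints on the boundary. I expect this step to reduce to a topological lemma about surfaces with labeled boundary, most likely proved by induction on a complexity invariant such as $-\chi(W)$ or the number of boundary components of $W$, and I anticipate that the stronger instances of \eqref{noobstruction} for proper sub-multicurves $\vgamma \subsetneq \valpha \cup \vbeta$ will be needed to rule out any global obstruction to matching the arcs between neighboring subsurfaces.
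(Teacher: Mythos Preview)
Your forward direction via Stokes is correct and essentially matches the paper's Lemma~\ref{lemma:obstruction}.

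Your backward direction is only a sketch, and the gap you yourself flag is real. Inserting oriented arcs into each complementary region $W$ of $S \setminus (\valpha \cup \vbeta)$ and then ``gluing across boundaries'' requires that the arc-endpoints on the two sides of every curve of $\valpha \cup \vbeta$ match up in number and position so as to close into simple curves; you give no mechanism for this, and merely invoking \eqref{noobstruction} for sub-multicurves does not produce one. A secondary point you pass over: new horizontal and new vertical arcs placed in the same $W$ must themselves be arranged to cross only positively, which is a further constraint on the local construction.

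The paper avoids the matching problem by never assembling curves from arcs. Instead it builds each new curve globally as a connected sum of an existing curve of $\vbeta$ (or $\valpha$) with an auxiliary curve, so the result is automatically closed and automatically inherits coherence with $\valpha$. The argument is staged: first ``singletons'' --- curves of $\valpha$ disjoint from $\vbeta$, or vice versa --- are eliminated by showing that \eqref{noobstruction} forces a certain directed graph on the components of $S \setminus (\vbeta \cup \singletons_{\vbeta}(\valpha))$ to be strongly connected, so that a directed cycle through a given singleton can be realized as a coherent curve meeting it (Lemmas~\ref{lem:arcs_on_subsurfs} and \ref{lem:addcurve_removesingleton}, Proposition~\ref{prop:extendnosingletons}). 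Once there are no singletons, every boundary component of every complementary region already carries $\vbeta$-segments, and iterated connected sums with essential curves in the complement raise Euler characteristic until the pair fills (Lemma~\ref{lem:nosingletons_extendtofill}). This two-stage connected-sum strategy is the missing idea in your proposal.
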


In particular, we note that if $(\valpha, \vbeta)$ satisfies \eqref{noobstruction} then none of the curves of $\valpha$ or $\vbeta$ are separating.

The proof of this theorem is spread throughout the section. In Section \ref{subsec:obstructexttv}, we give both geometric and topological proofs that condition \eqref{noobstruction} is necessary.
In light of the equivalence of Lemma \ref{lemma:tv-directional-equivalence}, it is enough to prove that a coherent pair of multicurves $(\valpha, \vbeta)$ satisfying the hypotheses of Theorem \ref{theorem:exttv_oriented} can be extended to a coherent, filling pair. 
Our plan is to use an inductive ``connected sum'' construction to extend the pair $(\valpha, \vbeta)$, building new curves out of, and informed by, existing ones, to decrease the complexity of $S \setminus (\alpha \cup \beta)$ at each step.
However, it turns out curves of $\valpha$ which do not meet any of the curves of $\vbeta$ (or vice versa), called \textit{singletons}, complicate this strategy.
As such, in Section \ref{subsec:nosingletons} we extend $(\valpha, \vbeta)$ to a coherent pair $(\valpha', \vbeta')$ in which there are no singletons, that is, so that each curve of $\valpha'$ intersects some curve of $\vbeta'$, and vice versa.
We then use connected sums in Section \ref{subsec:connectsums} to add curves to $\vbeta'$ that are all coherent with $\valpha'$, thereby completing the proof of Theorem \ref{theorem:exttv_oriented}.
This section also contains an explanation of how Theorem \ref{theorem:exttv_oriented} implies Theorem \ref{mainthm:parallel}.

For the rest of the paper, let us assume (without loss of generality) that $i(\valpha, \vbeta) \ge 0$, so that the curves of $\vbeta$ cross the curves of $\valpha$ from right to left.

\subsection{Necessity}\label{subsec:obstructexttv}

The first step to proving Theorem \ref{theorem:exttv_oriented} is to show that condition \eqref{noobstruction} is necessary.

\begin{lemma}
\label{lemma:obstruction}
Let $(\valpha, \vbeta)$ be a pair of multicurves on $S$ with $\geomintersect(\valpha, \vbeta) > 0$.
Suppose that there exists a multicurve $\vgamma \subset \valpha \cup \vbeta$ and component $W$ of $S \setminus \vgamma$ with $A_L \neq \emptyset$ but $A_R = \emptyset$. Then $(\valpha, \vbeta)$ are not realizable as a pair of directional multicylinders on any translation surface.
\end{lemma}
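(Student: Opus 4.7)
The plan is to derive a contradiction via Stokes' theorem applied to the holomorphic 1-form $\omega$ on $W$, after embedding $W$ flatly inside the purported translation surface. Assume for contradiction that $(\valpha,\vbeta)$ is realized as a directional multicylinder on some marked translation surface $(X,\omega,\varphi)$. Using the $\GL_2(\RR)$-action, first normalize so that the core curves of $\valpha$ point in the positive $x$-direction and those of $\vbeta$ in the positive $y$-direction; equivalently, $\hol(\vec{a}) \in \RR_{>0}$ for each $\vec{a} \subseteq \valpha$ and $\hol(\vec{b}) \in i\RR_{>0}$ for each $\vec{b} \subseteq \vbeta$. The image of $W$ can then be identified with (the closure of) a connected component of $X$ minus thin open annular collar neighborhoods of the core curves of the cylinders in $\vgamma$, chosen small enough to stay inside their respective cylinders and hence to miss all cone points.

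Since $\omega$ is closed, Stokes' theorem gives $\int_{\partial W} \omega = 0$, and the task is to compute this integral along the partition $\partial W = A_L \sqcup A_R \sqcup B_L \sqcup B_R$. A local check --- comparing the intrinsic orientation of each $\vec{c}_L$ or $\vec{c}_R$ (inherited from $\vec{c}$) against the induced $\partial W$-orientation (under which $W$ sits on the left) --- should show that the intrinsic orientation \emph{agrees} with the $\partial W$-orientation along $L$-components and \emph{disagrees} along $R$-components. This yields
\[
0 \;=\; \int_{\partial W} \omega \;=\; \sum_{\vec{a} \in A_L} \hol(\vec{a}) \;-\; \sum_{\vec{a} \in A_R} \hol(\vec{a}) \;+\; \sum_{\vec{b} \in B_L} \hol(\vec{b}) \;-\; \sum_{\vec{b} \in B_R} \hol(\vec{b}).
\]
Taking real parts isolates the $\valpha$-contribution (since each $\hol(\vec{b})$ is purely imaginary), giving $\sum_{A_L} \hol(\vec{a}) = \sum_{A_R} \hol(\vec{a})$. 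Because each $\hol(\vec{a})$ is strictly positive, the hypothesis $A_L \neq \emptyset$ forces the left-hand side to be strictly positive, so $A_R$ must also be nonempty --- contradicting the standing assumption that $A_R = \emptyset$.

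The main obstacle I anticipate is the orientation bookkeeping: $\vec{c}_L$ and $\vec{c}_R$ carry their own intrinsic orientations inherited from $\vec{c}$, which may or may not match the induced $\partial W$-orientation depending on which side of $\vec{c}$ the subsurface $W$ lies on. Once that case check is done, nothing else is subtle --- zeros of $\omega$ in the interior of $W$ are irrelevant since $d\omega = 0$ globally, and the flat structure on $W$ is immediate as a subsurface of $X$. One could give a more topological variant by reading off the vertical ``flux'' of $W$ using the horizontal foliation together with the transverse measure $|dy|$, but the Stokes computation above is the most direct route to the flux-balance identity that drives the contradiction.
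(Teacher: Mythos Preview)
Your argument is correct and is essentially the same Stokes/holonomy argument the paper gives: integrate $\omega$ over $\partial W$, separate the $\valpha$- and $\vbeta$-contributions, and observe that the $\valpha$-part cannot vanish if $A_L\neq\emptyset$ while $A_R=\emptyset$. The only cosmetic difference is that the paper leaves the two holonomy rays abstract and invokes their linear independence (using $\geomintersect(\valpha,\vbeta)>0$ to rule out parallelism), whereas you first apply $\GL_2(\RR)$ to send those rays to the positive $x$- and $y$-axes and then take real parts; you should note that this normalization step is exactly where the hypothesis $\geomintersect(\valpha,\vbeta)>0$ enters.
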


\begin{proof}
Suppose that $\valpha = \bigcup_{i} \va_i$ and $\vbeta = \bigcup_{j} \vb_j$ were realized as a pair of directional
multicylinders on the marked translation surface $(X, \omega, \marking)$.
Because $\valpha$ is realized as a directional multicylinder, all of the period vectors $\hol(\va_i)$ live in the same ray $\hol(\va_1) \cdot \RR_{>0} \subset \CC$. Similarly, there is a ray containing all of the period vectors $\hol(\vb_j)$. Because $\valpha$ and $\vbeta$ intersect, the core curves of the corresponding cylinders are not parallel, and so the corresponding rays are not parallel.
  
By Stokes's theorem, we have 
\[ \sum_{\va_L \in A_L} \hol(\va_L) + \sum_{\vb_L \in B_L} \hol(\vb_L) - \sum_{\vb_R \in B_R}\hol(\vb_R)
  = \int_{\partial W} \omega = \int_W d\omega = 0, \]
because holomorphic forms on a Riemann surface are always closed. By linear independence, $\hol(\vec{a}_L) = 0$ for all $\va_L \in A_L$.
Since cylinders never have 0 circumference, this is a contradiction.
\end{proof}

We note that the same proof holds if one allows $\vbeta$ to be empty, implying that no separating curve is realized as a cylinder on any translation surface.

While the following result is not strictly necessary for the proof of Theorem \ref{theorem:exttv_oriented}, we observe that Lemma \ref{lemma:tv-directional-equivalence} gives us another way to understand the obstruction from Lemma \ref{lemma:obstruction} in more topological terms.

\begin{corollary}\label{cor:top version of obstruction}
Let $(\alpha, \beta)$ be a pair of unoriented multicurves on $S$. If some curve $a \subset \alpha$ separates $S \setminus \beta$, then $\alpha$ and $\beta$ cannot be simultaneously realized as parallel multicylinders on a translation surface.
\end{corollary}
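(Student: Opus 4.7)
The plan is to suppose, for contradiction, that $\alpha$ and $\beta$ are simultaneously realized as parallel multicylinders on some translation surface, and then invoke Lemmas \ref{lemma:tv-directional-equivalence} and \ref{lemma:obstruction} to derive a violation of condition \eqref{noobstruction}.

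First I would orient the multicurves. Since each of $\alpha$ and $\beta$ is individually a parallel multicylinder, every curve of $\alpha$ can be oriented so that its period points in $\alpha$'s common slope direction, and likewise for $\beta$; by Lemma \ref{1 parallel implies coherently orientable}, these orientations also make $(\valpha,\vbeta)$ coherent. The oriented pair is therefore realized as a pair of directional multicylinders, so Lemma \ref{lemma:tv-directional-equivalence} produces a coherent filling pair $(\valpha',\vbeta')$ extending $(\valpha,\vbeta)$ and realized as directional multicylinders on some (square-tiled) translation surface. Because $(\valpha',\vbeta')$ fills $S$, we have $\geomintersect(\valpha',\vbeta') > 0$.

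Next I would extract the topological obstruction. The hypothesis that $a$ separates $S\setminus\beta$ implicitly requires $a\cap\beta = \emptyset$, so $\vgamma := \va\cup\vbeta$ is a multicurve, and since $\va\in\valpha\subset\valpha'$ and $\vbeta\subset\vbeta'$, it sits inside $\valpha'\cup\vbeta'$. The separating property yields a component $W$ of $S\setminus\vgamma$ whose boundary contains only one side of $a$; relabeling if necessary, $\va_L\in\partial W$ while $\va_R\notin\partial W$, so in the partition $\partial W = A_L\sqcup A_R\sqcup B_L\sqcup B_R$ induced by the pair $(\valpha',\vbeta')$, we have $A_L = \{\va_L\}\neq\emptyset$ and $A_R = \emptyset$.

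Applying Lemma \ref{lemma:obstruction} to the pair $(\valpha',\vbeta')$ with this $\vgamma$ and $W$---whose hypothesis $\geomintersect(\valpha',\vbeta')>0$ is satisfied automatically from filling---yields that $(\valpha',\vbeta')$ is not realizable as directional multicylinders, contradicting the realization obtained from the extension. The only delicate point is verifying that the labels $A_\ast, B_\ast$ assigned to $\partial W$ are unchanged when passing from $(\valpha,\vbeta)$ to the larger pair $(\valpha',\vbeta')$; this is immediate, because only curves of $\vgamma\subset\valpha\cup\vbeta$ contribute to $\partial W$ and none of the new curves in $\valpha'\setminus\valpha$ or $\vbeta'\setminus\vbeta$ can show up there. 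I do not anticipate this being a real obstacle.
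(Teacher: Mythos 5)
Your proof is correct, and its first half coincides with the paper's: both arguments begin by orienting the two parallel multicylinders, noting via Lemma \ref{1 parallel implies coherently orientable} that the resulting pair is coherent and directional, and then invoking Lemma \ref{lemma:tv-directional-equivalence} to extend $(\valpha,\vbeta)$ to a coherent filling pair $(\valpha',\vbeta')$ realized as directional multicylinders on a square-tiled surface. Where you diverge is in how the contradiction is extracted. You feed the separating curve back into Lemma \ref{lemma:obstruction}: taking $\vgamma=\va\cup\vbeta$ and the component $W$ seeing only one side of $a$, you get $A_L\neq\emptyset$ and $A_R=\emptyset$, and the Stokes/period argument of that lemma rules out the directional realization of $(\valpha',\vbeta')$. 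The paper instead stays purely topological at this stage: since $(\valpha',\vbeta')$ fills, some $b\subset\beta'$ meets $a$; being disjoint from the other curves of $\beta'$, it cannot cross the remaining boundary components of $W$, so it must cross $a$ once entering $W$ and once leaving, with opposite signs, contradicting coherence. Both routes are valid. Yours is marginally shorter because it reuses machinery already established, while the paper's delivers on the remark immediately preceding the corollary (that Lemma \ref{lemma:tv-directional-equivalence} recasts the obstruction of Lemma \ref{lemma:obstruction} ``in more topological terms'') by avoiding any further appeal to periods. The two side conditions you flag---that filling forces $\geomintersect(\valpha',\vbeta')>0$, and that the boundary labels of $W$ are unaffected when passing to the larger pair---are exactly the right things to check, and both go through.
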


\begin{proof}
Suppose that $\alpha$ and $\beta$ were both realized as parallel multicylinders on some translation surface. By Lemma \ref{lemma:tv-directional-equivalence}, this implies that we can extend $(\alpha, \beta)$ to a coherently orientable filling pair $(\alpha', \beta')$.

Now if some curve $a \subset \alpha$ separates $S \setminus \beta$, there exists some subsurface $W$ one of whose boundary components corresponds to $a \subset \alpha$ and the rest of which correspond to curves of $\beta$.
Since $\beta'$ fills with $\alpha' \supset \alpha$, this implies there is some curve $b \subset \beta'$ with $\geomintersect(a, b) > 0$.
But now since $b$ is disjoint from the other curves of $\beta'$, it cannot intersect any of the other boundary curves of $W$, so it must cross $a$ at least twice: one time entering $W$, and one time escaping $W$.
This is a contradiction with coherence, and so we see that $(\alpha, \beta)$ cannot be realized as a pair of parallel multicylinders.
\end{proof}

\subsection{Removing singletons}\label{subsec:nosingletons}
We now begin to build towards a proof that the phenomenon described in Lemma \ref{lemma:obstruction} is the only obstruction to realizing a coherent pair of multicurves as a pair of directional multicylinders.
The first step is to show that any coherent pair of oriented multicurves $(\valpha, \vbeta)$ satisfying \eqref{noobstruction} can be extended to a coherent pair $(\valpha', \vbeta')$ where every curve of $\valpha'$ intersects a curve of $\vbeta'$, and vice versa.

\begin{definition}
The {\em singleton set } $\smash{\singletons_{\vbeta}} (\valpha)$ of $\valpha$ with respect to $\vbeta$ is the set of curves of $\valpha$ which do not intersect any curve of $\vbeta$ when realized in minimal position.
\end{definition}

Given $\va \in \smash{\singletons_{\vbeta} (\valpha)}$, we wish to construct a curve $\vb$ intersecting $\va$ such that $(\valpha, \vbeta \cup \vb)$ remains coherent.
Our strategy is to concatenate arcs which are all coherent with $\valpha$ but disjoint from $\vb$.
First, we need to show that there is a sufficient supply of such arcs (Lemma \ref{lem:arcs_on_subsurfs} below).

Let us first recall the notion of a connected sum of curves.

\begin{definition}\label{def:connsum}
Let $a$ be a simple curve or arc on a surface and let $b$ be a disjoint simple closed curve. Let $\varepsilon$ be an arc connecting $a$ to $b$, disjoint from $a$ and $b$ except at its endpoints. The {\em connected sum} $a +_\varepsilon b$ is the curve (or arc) obtained by taking the boundary of a tubular neighborhood of $a \cup \varepsilon \cup b$.
\end{definition}

If $\va$ and $\vb$ are oriented and $\varepsilon$ runs from the left-hand side of $\va$ to the left-hand side of $\vb$, then moreover one can orient $\va +_\varepsilon 
\vb$ so that it runs parallel to $\va$ and $\vb$ away from $\varepsilon$. See Figure \ref{fig:connected-sum}.

\begin{lemma}\label{lem:arcs_on_subsurfs}
Let $(\valpha, \vbeta)$ be a coherent pair of oriented multicurves and let $W$ be a component of $S \setminus (\vbeta \cup \singletons_{\vbeta}(\valpha))$. 
Let $A_L$ ($A_R$) denote the set of boundary components of $W$ arising from the left- (right-)hand sides of curves of $\singletons_{\vbeta} (\valpha)$.
Then for every $\va_L \in A_L$ and $\va_R \in A_R$, there is an oriented arc on $W$ traveling from $\va_L$ to $\va_R$ that crosses $\valpha|_W$ from right to left.
\end{lemma}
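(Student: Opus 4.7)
The plan is to recast the problem as a reachability question in a directed ``chamber graph.'' Define $\Gamma$ to be the directed multigraph whose vertices are the connected components (\emph{chambers}) of $W \setminus \valpha|_W$, with a directed edge from $C_R(\eta)$ to $C_L(\eta)$ for each arc $\eta \in \valpha|_W$, encoding the ability to cross $\eta$ from right to left. Because the singletons $\va, \va'$ are disjoint from every other curve of $\valpha$, the boundary circles $\va_L \in A_L$ and $\va_R \in A_R$ carry no endpoints of $\valpha|_W$ and each lies in the closure of a unique chamber, $C(\va_L)$ or $C(\va_R)$. An oriented arc in $W$ from $\va_L$ to $\va_R$ crossing $\valpha|_W$ only from right to left corresponds to a directed walk in $\Gamma$, supplemented by \emph{boundary slides}: the arc may be isotoped along any component of $\partial W$ without crossing $\valpha|_W$, identifying all chambers adjacent to that component. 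Let $\sim$ denote the equivalence on chambers generated by sharing a component of $\partial W$.

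The essential consequence of the coherence of $(\valpha, \vbeta)$---combined with the convention that curves of $\vbeta$ cross curves of $\valpha$ from right to left---is the structural fact that each arc $\eta \in \valpha|_W$, oriented consistently with $\valpha$, \emph{begins} at a point of some $\vb_R \in B_R$ and \emph{ends} at a point of some $\vb_L \in B_L$. Indeed, immediately after crossing a $\vbeta$-curve (in $\valpha$'s direction) the curve lies on the right side of that $\vbeta$-curve, and immediately before the next crossing it lies on the left.

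To prove reachability of $C(\va_R)$ from $C(\va_L)$ in $\Gamma/\!\sim$, I would argue by contradiction. Let $\mathcal{R}$ be the set of chambers reachable from $C(\va_L)$ via directed edges and boundary slides, and assume $C(\va_R) \notin \mathcal{R}$. Set $U = \bigcup_{C \in \mathcal R} \bar C$. Then $\partial U$ decomposes into (i) \emph{outgoing arcs}---those $\eta \in \valpha|_W$ with $C_L(\eta) \in \mathcal R$ but $C_R(\eta) \notin \mathcal R$, each traversed in its natural orientation because $U$ lies on its left---together with (ii) entire boundary components of $W$ adjacent to $\mathcal R$ (entire by closure of $\mathcal R$ under boundary slides). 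By construction, $\va_L \subset \partial U$ while $\va_R \cap \partial U = \emptyset$, and the structural fact forces each outgoing arc to start on some $\vb_R \subset \partial U$ and end on some $\vb_L \subset \partial U$.

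The main obstacle is converting this local incidence data into a global contradiction. The intended argument would match up the endpoints of outgoing arcs on each $\vb_R$ and $\vb_L$ in $\partial U$ and leverage the sliding-closure of $\mathcal R$ to produce a parity or homological obstruction ruling out such a proper subsurface $U$; one natural reformulation is to show that $[\partial U] = 0$ in $H_1(W,\partial W;\ZZ)$ is incompatible with every outgoing arc contributing with its positive orientation while $\va_R$ is entirely excluded from $\partial U$. Once the contradiction is established, choosing a directed walk in $\Gamma/\!\sim$ and realizing each boundary slide and each directed edge by an embedded arc in $W$ produces the desired monotone arc from $\va_L$ to $\va_R$.
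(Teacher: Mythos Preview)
Your chamber-graph framework is a genuinely different route from the paper's proof, which instead starts with an arbitrary arc from $\va_L$ to $\va_R$ and iteratively surgers away each left-to-right crossing: first by rerouting along the offending arc $\mathfrak a$ of $\valpha|_W$ to leave at most one intersection with $\mathfrak a$, and then, if one bad crossing remains, by connect-summing with the $\vbeta$-boundary at the terminus of $\mathfrak a$. Both approaches ultimately hinge on the same consequence of coherence---that wrapping around a $\vbeta$-boundary of $W$ in its positive direction crosses the incident $\valpha$-arcs only from right to left---but the paper's argument is constructive while yours is an existence argument via reachability.

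There is, however, an actual error in your setup, and fixing it is exactly what closes the gap you flag at the end. You assert that a boundary slide along any component of $\partial W$ can be performed ``without crossing $\valpha|_W$.'' This is false for components in $B_L\cup B_R$: every arc of $\valpha|_W$ has its endpoints on precisely those boundaries, so a push parallel to such a $\vb$ must cross them. The correct statement---and this is where coherence does its work---is that sliding once around $\vb$ in its given orientation crosses each incident $\valpha$-arc from right to left. Thus chambers adjacent to a common $\vb$-component are already joined by a \emph{directed} path in $\Gamma$ itself; your quotient by $\sim$ is unnecessary.

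This observation finishes the argument with no homology or parity required. If $\mathcal R$ were proper, connectivity of $W$ yields an outgoing arc $\eta$ with $C_L(\eta)\in\mathcal R$ and $C_R(\eta)\notin\mathcal R$. Near the initial point of $\eta$ on some $\vb_R$, both $C_L(\eta)$ and $C_R(\eta)$ abut $\vb_R$; sliding positively once around $\vb_R$ starting from $C_L(\eta)$ is a directed walk in $\Gamma$ that reaches $C_R(\eta)$, forcing $C_R(\eta)\in\mathcal R$, a contradiction. Hence $\Gamma$ is strongly connected and $C(\va_R)$ is reachable from $C(\va_L)$.
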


\begin{figure}
    \centering
    \def\svgwidth{450 pt}
    \includegraphics[width=.8\textwidth]{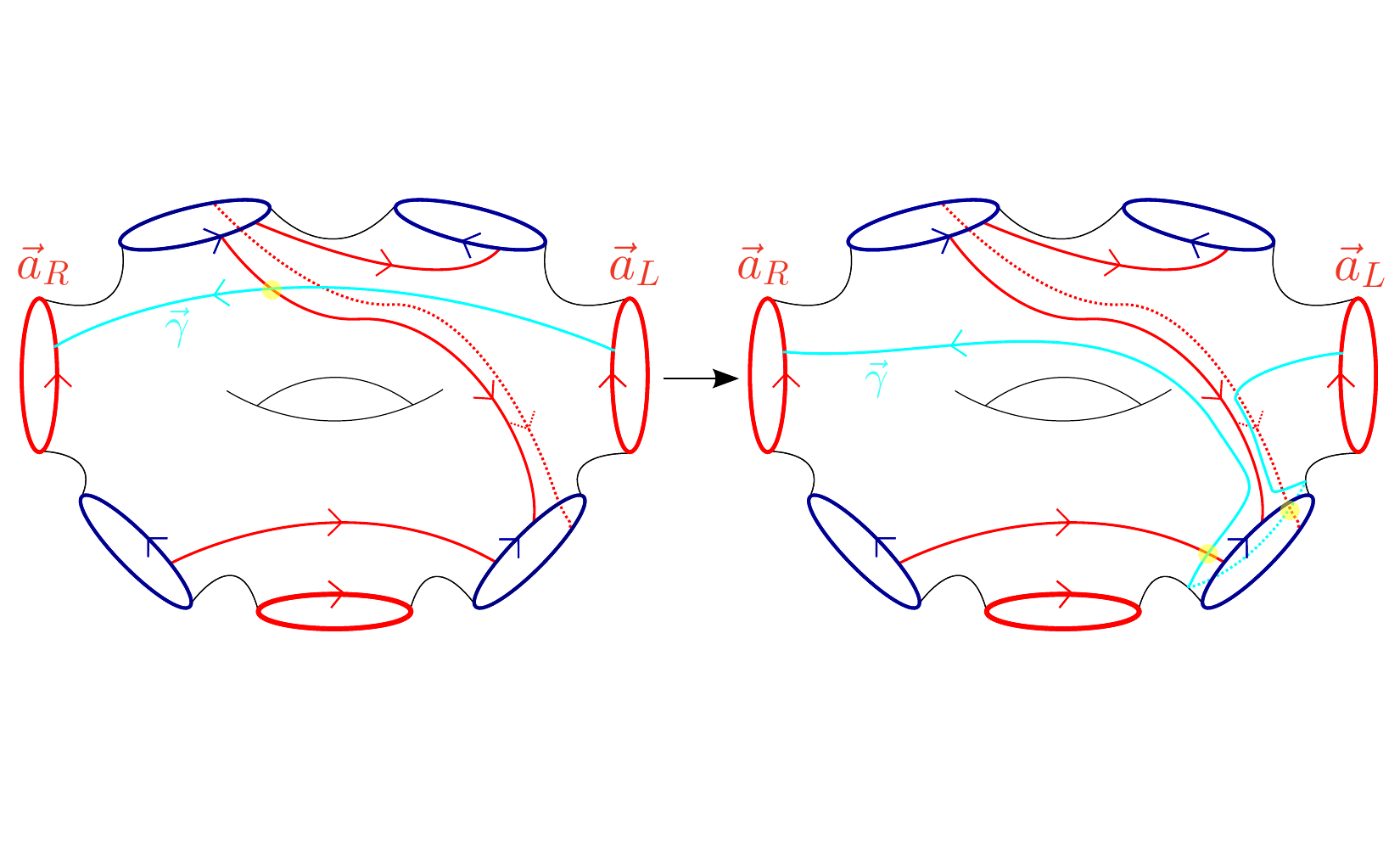}
    \caption{Replacing a non-coherent intersection with coherent ones}
    \label{fig:connected-sum}
\end{figure}

\begin{proof}
Given any $\va_L \in A_L$ and any $\va_R \in A_R$, let $\vgamma$ be an arbitrary oriented arc on $W$ connecting initial boundary component $\va_L$ to terminal boundary component $\va_R$.
It is possible that $\vgamma$ intersects arcs of $\valpha|_W$ from left to right (when realized in minimal position).
Our strategy is to surger $\vgamma$ to remove these intersection points.

So suppose that $\vgamma$ crosses an arc $\mathfrak{a}$ of $\valpha|_W$ from left to right. 
Order all of the points $p_1, \ldots, p_N$ of $\mathfrak{a} \cap \gamma$ as seen by the oriented arc $\vgamma$.
We may now build a new arc $\vgamma'$ obtained by following $\vgamma$ from $\va_L$ to $p_1$, then traveling along $\mathfrak{a}$ from $p_1$ to $p_N$, then following $\vgamma$ from $p_N$ to $\va_R$.
This arc has the same endpoints as $\vgamma$ but intersects $\mathfrak{a}$ at most once; if it is disjoint or crosses $\mathfrak{a}$ from right to left, then we are done.

Otherwise, $\vgamma'$ crosses $\mathfrak{a}$ from left to right exactly once. In this case, let $\varepsilon$ denote the subarc of $\mathfrak{a}$ running from $\mathfrak{a} \cap \gamma'$ to the boundary curve $\vb$ containing the terminal endpoint of $\mathfrak{a}$.
Then we may take the connect sum of $\vgamma'$ and $\vb$ along $\varepsilon$, adopting the orientations of $\vgamma'$ and of $\vb$. This procedure has the effect of wrapping $\vgamma'$ around $\vb$, thereby removing the left-to-right intersection of $\vgamma'$ with $\mathfrak{a}$. While the connected sum may introduce new intersections with $\valpha|_W$, the coherence of $\valpha$ and $\vbeta$ guarantees that $\vgamma'$ crosses from right to left at these new intersections.

In either case, we have built an arc with the same endpoints as $\vgamma$ with at least one fewer left-to-right intersection with $\valpha|_W$. Repeating this procedure, we are left with an arc on $W$ from $\va_L$ to $\va_R$ that crosses arcs of $\valpha|_W$ only from right to left.
\end{proof}

We now show that we can piece together these arcs coherently.

\begin{lemma}\label{lem:addcurve_removesingleton}
Let $(\valpha, \vbeta)$ be a coherent pair of oriented multicurves satisfying \eqref{noobstruction}. Given a singleton $\va \in \smash{\singletons_{\vbeta}}(\valpha)$, there is an oriented curve $\vb$ that meets $\va$, is disjoint from $\vbeta$, and whose intersections $\valpha$ are all positive.
\end{lemma}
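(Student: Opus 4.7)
The plan is to construct $\vb$ as a concatenation of arcs lying in the components of $S \setminus \vgamma$, where $\vgamma := \vbeta \cup \singletons_{\vbeta}(\valpha)$, joined by transverse crossings of singletons in $\singletons_{\vbeta}(\valpha)$, and finally closed off by a single transverse crossing of $\va$. To orchestrate this, I would introduce a directed graph $\vec{G}$ whose vertices are the components of $S \setminus \vgamma$ and whose edges are the singletons of $\singletons_{\vbeta}(\valpha)$, with each singleton $\va'$ directed from the component containing $\va'_R$ to the component containing $\va'_L$. Writing $W_L$ and $W_R$ for the components having $\va_L$ and $\va_R$ on their boundaries, the heart of the argument is to find a simple directed path $W_L = W^{(0)} \to W^{(1)} \to \cdots \to W^{(k)} = W_R$ in $\vec{G}$ via singletons $\va^{(1)}, \ldots, \va^{(k)}$ (note that $\va$ itself, as an edge, goes from $W_R$ to $W_L$, so it does not appear in any such path).

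The existence of this path is where \eqref{noobstruction} enters. I would argue by contradiction: let $V^+$ be the set of vertices reachable from $W_L$ in $\vec{G}$, assume $W_R \notin V^+$, and set $\vec{N} := \bigcup_{W \in V^+} \overline{W}$ with $\vgamma'' := \partial \vec{N} \subseteq \vgamma$. Because $V^+$ is closed under following outgoing directed edges, every singleton $\va' \in \vgamma''$ must have $\va'_L \in V^+$ and $\va'_R \notin V^+$; in particular, $\va \in \vgamma''$. Since any two elements of $V^+$ are joined in $\vec{G}$ by a directed path whose edges have both endpoints in $V^+$ and hence lie in $\vgamma \setminus \vgamma''$, the subsurface $\vec{N}$ is a single connected component of $S \setminus \vgamma''$. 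Applying \eqref{noobstruction} to the pair $(\vgamma'', \vec{N})$ yields the desired contradiction: $A_L(\vec{N})$ contains $\va_L$ and is non-empty, while $A_R(\vec{N})$ is empty.

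Given the path, I would apply Lemma~\ref{lem:arcs_on_subsurfs} inside each $W^{(i)}$ to produce an oriented arc from its appropriate $L$-boundary ($\va_L$ when $i=0$, else $\va^{(i)}_L$) to its appropriate $R$-boundary ($\va_R$ when $i=k$, else $\va^{(i+1)}_R$) that crosses $\valpha|_{W^{(i)}}$ only from right to left. Concatenating these arcs with the singleton crossings $\va^{(1)}, \ldots, \va^{(k)}$ and closing up via a transverse crossing of $\va$ from $\va_R$ back to $\va_L$ produces $\vb$. By construction $\vb$ meets $\va$ at the closing crossing, is disjoint from $\vbeta$ (every sub-arc lies in a component of $S \setminus \vgamma$ and every crossing is of a curve in $\singletons_{\vbeta}(\valpha)$), and every intersection with $\valpha$ is a positive right-to-left crossing. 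Choosing a simple directed path makes the $W^{(i)}$ distinct, so the arcs are pairwise disjoint and $\vb$ is in fact a simple closed curve.

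The main obstacle is the middle step: extracting global directed reachability in $\vec{G}$ from the local hypothesis \eqref{noobstruction}. The subtle points are identifying $\vec{N}$ as a single connected component of $S \setminus \vgamma''$ rather than merely a union, and verifying that every singleton in $\vgamma''$ orients coherently across the boundary of $\vec{N}$; both depend essentially on the forward-reachability definition of $V^+$, and without them the application of \eqref{noobstruction} would not yield the one-sided boundary pattern that produces the contradiction.
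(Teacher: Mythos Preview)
Your proposal is correct and follows the same strategy as the paper: both build the directed graph on the components of $S \setminus (\vbeta \cup \singletons_{\vbeta}(\valpha))$, use condition \eqref{noobstruction} to produce the required directed path (you via a forward-reachability argument, the paper via the equivalent edge-cut characterization of strong connectivity), and then concatenate arcs supplied by Lemma~\ref{lem:arcs_on_subsurfs}. Your version is marginally more streamlined in that it absorbs the paper's separate change-of-coordinates treatment of the case where $\va$ is non-separating in $S\setminus\beta$ into the general argument.
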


\begin{proof}
If $\va$ is non-separating on $S \setminus \beta$, then a standard change-of-coordinates argument (see, e.g., \cite[\S 1.3.3]{FarbMarg}) implies that there must be an unoriented curve $b \subset S \setminus \beta$ meeting $a$ exactly once, and we can choose its orientation to satisfy coherence.

Otherwise, let $W_1, \ldots, W_n$ denote the complementary subsurfaces of
$S \setminus (\vbeta \cup \singletons_{\vbeta}(\valpha))$.
Build a directed graph $G$ whose vertices are the subsurfaces $W_i$ and so that there is an edge from $W_i$ to $W_j$ if $W_i$ is on the left and $W_j$ is on the right of some curve $\va' \in \singletons_{\vbeta}(\valpha)$.
Connected components of this graph correspond to components of $S \setminus \beta$.
See Figure \ref{fig:digraph}.

\begin{figure}
    \centering
    \def\svgwidth{450 pt}
    \includegraphics[width=.8\textwidth]{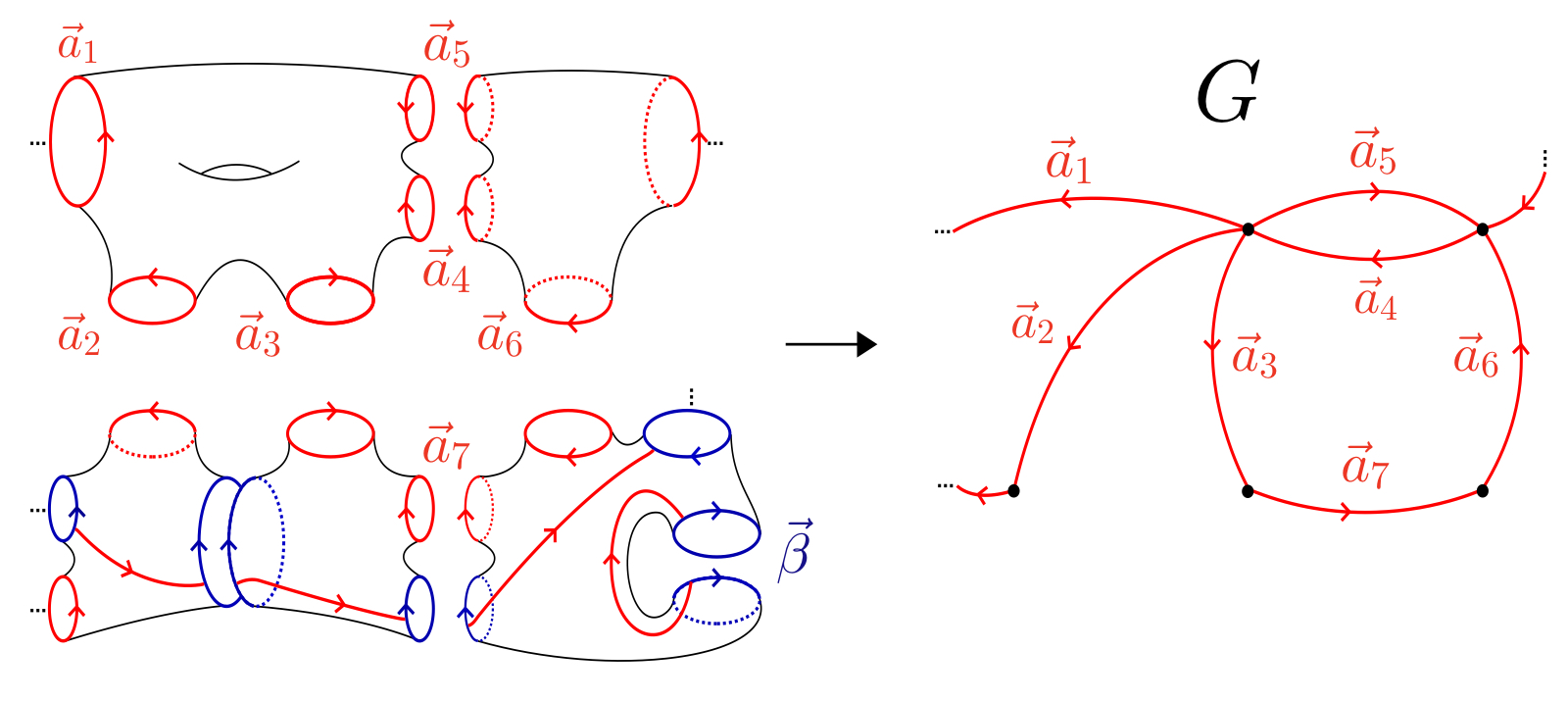}
    \caption{Building a graph out of components of $S \setminus (\vbeta \cup \singletons_{\vbeta}(\valpha))$.}
    \label{fig:digraph}
\end{figure}

\begin{claim}
Each connected component $G_0$ of $G$ is strongly connected, i.e., there is a directed path between any two vertices of $G_0$.
\end{claim}

\begin{proof}
We prove an equivalent definition of strong connectivity: for any edge cut of $G_0$ (i.e., any partition of its vertices into two sets), there are directed edges traveling from one set of the partition to the other and vice versa.

Consider an arbitrary partition
\[V(G_0) = V_1 \sqcup V_2\]
of the vertices of $G_0$.
Let $E_{12}$ denote set of oriented edges running from $V_1$ to $V_2$ and $E_{21}$ the set of oriented edges from $V_2$ to $V_1$. Let 
$\valpha_E \subset \smash{\singletons_{\vbeta}}(\valpha)$
be the oriented multicurve corresponding to $E = E_{12} \cup E_{21}$.

Now choose any component $W_E$ of $S \setminus (\vbeta \cup \valpha_E)$; note that $W_E$ is necessarily built by gluing subsurfaces corresponding to vertices of either $V_1$ or $V_2$, but not both. 
The components of $\partial W_E$ are oriented depending on whether they correspond to edges from $E_{12}$ or $E_{21}$.
For example, if $W_E$ consists of a union of subsurfaces corresponding to vertices of $V_1$, then the curves of $\partial W_E$ arising from $E_{12}$ are oriented with $W_E$ on their left.

Now because $(\valpha, \vbeta)$ satisfies \eqref{noobstruction}, we therefore know that both $E_{12}$ and $E_{21}$ must be nonempty, hence there are edges running from $V_1$ to $V_2$ as well as edges running from $V_2$ to $V_1$.
Since our partition was arbitrary, this allows us to deduce strong connectivity of $G_0$.
\end{proof}

Consider now the edge of $G$ corresponding to our chosen singleton $\va$ and let $W_i$ and $W_t$ denote its initial and terminal vertices. Because each component of $G$ is strongly connected, there is a directed (simple) path from $W_t$ to $W_i$.
This path corresponds to a sequence of subsurfaces
\[W_t = W_1, W_2, \ldots, W_N = W_i\]
where $W_j$ lies to the left and $W_{j+1}$ to the right of some curve $\va_j \in \singletons_{\vbeta}(\valpha)$.

On each $W_j$, use Lemma \ref{lem:arcs_on_subsurfs} to choose an oriented arc connecting $\va_j$ to $\va_{j+1}$ and crossing $\valpha$ from right to left (where indices are interpreted mod $N$). 
We may then concatenate these arcs, possibly with a partial twist around the curves of $\va_j$ to ensure that endpoints match up.
This yields a curve which crosses each $\va_j$ (in particular, crosses $\va$), is disjoint from $\vbeta$, and each of its intersections with $\valpha$ is positive.
\end{proof}

Iterating this Lemma lets us remove the singletons of $\valpha$ by adding curves to $\vbeta$, none of which are themselves singletons. Then, we can swap the roles of $\valpha$ and $\vbeta$ to remove the singletons of $\vbeta$ by adding curves to $\valpha$.
For later use, we record this as the following:

\begin{proposition}\label{prop:extendnosingletons}
Let $(\valpha, \vbeta)$ be a coherent pair of oriented multicurves satisfying \eqref{noobstruction}.
Then there are oriented multicurves $\valpha' \supset \valpha$ and $\vbeta' \supset \vbeta$ such that $(\valpha', \vbeta')$ is coherent, satisfies \eqref{noobstruction}, and has no singletons. That is, every curve of $\valpha'$ meets some curve of $\vbeta'$ and vice versa.

Moreover, if $\singletons_{\vbeta}(\valpha) = \emptyset$ then $\vbeta'$ can be taken to equal $\vbeta$ and if 
$\singletons_{\valpha}(\vbeta) = \emptyset$ then $\valpha'$ can be taken to equal $\valpha$.
\end{proposition}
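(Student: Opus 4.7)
The approach is to iteratively apply Lemma~\ref{lem:addcurve_removesingleton}, one singleton at a time. I would enumerate $\singletons_{\vbeta}(\valpha) = \{\va^{(1)}, \ldots, \va^{(k)}\}$ and, for each $\va^{(i)}$ that is still a singleton with respect to the current multicurve on the $\vbeta$-side, invoke the lemma to obtain a curve $\vb^{(i)}$ that meets $\va^{(i)}$, is disjoint from the current $\vbeta$, and has only positive intersections with $\valpha$. Appending $\vb^{(i)}$ to $\vbeta$ strictly shrinks the singleton set of $\valpha$ (since $\va^{(i)}$ is no longer a singleton) and cannot enlarge it, while $\vb^{(i)}$ itself is not a singleton of the augmented $\vbeta$ because it meets $\va^{(i)} \in \valpha$. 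After finitely many steps I arrive at a coherent pair $(\valpha, \vbeta^\ast)$ with $\singletons_{\vbeta^\ast}(\valpha) = \emptyset$, and moreover $\singletons_{\valpha}(\vbeta^\ast) = \singletons_{\valpha}(\vbeta)$, since original curves of $\vbeta$ retain their singleton status while every newly added curve meets $\valpha$. I then repeat the procedure with the roles of $\valpha$ and $\vbeta^\ast$ reversed to produce the desired $(\valpha', \vbeta')$ with no singletons on either side. The two ``moreover'' clauses follow immediately: each phase is a no-op when its corresponding singleton set starts out empty.

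Coherence propagates automatically, since every curve added to $\vbeta$ is disjoint from $\vbeta$ and positively coherent with $\valpha$ (and symmetrically in the second phase). The substantive content is therefore the preservation of condition~\eqref{noobstruction} at each insertion. Fix an added curve $\vb$ realizing a singleton $\va$ and consider any sub-multicurve $\vgamma$ of the augmented pair. If $\vb \notin \vgamma$, the condition is inherited from the previous pair, so I may assume $\vgamma = \vgamma' \cup \{\vb\}$ with $\vgamma' \subseteq \valpha \cup \vbeta$ necessarily disjoint from $\vb$. Let $W_0$ be the component of $S \setminus \vgamma'$ containing $\vb$; then the components of $S \setminus \vgamma$ lying inside $W_0$ are exactly the components of $W_0 \setminus \vb$. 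The dangerous scenario is that $\vb$ separates $W_0$: one of the pieces would then receive $\vb_L$ as its only $\vbeta$-boundary contribution, violating \eqref{noobstruction} for lack of a right-$\vbeta$-boundary.

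The key observation---and the main obstacle of the argument---is that $\vb$ can never separate $W_0$. Because $\vgamma'$ is disjoint from $\vb$, it cannot contain $\va$ (which meets $\vb$), so $\va \subset S \setminus \vgamma'$; and since $\vb$ meets $\va$, the two curves must share a component, forcing $\va \subset W_0$. If $\vb$ were separating in $W_0$, then every closed curve in $W_0$ would have vanishing algebraic intersection with $\vb$; but coherence guarantees that $\va$ and $\vb$ have algebraic intersection equal to $\geomintersect(\vb, \va) > 0$ in absolute value, a contradiction. Hence $W_0 \setminus \vb$ is connected, both $\vb_L$ and $\vb_R$ appear in its $\vbeta$-boundary (so the $B$-partition is automatically balanced), and the $A$-partition is inherited from $W_0$, which satisfies \eqref{noobstruction} by the inductive hypothesis. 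This completes the verification of \eqref{noobstruction}, and hence the plan.
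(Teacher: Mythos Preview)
Your overall strategy matches the paper's: iterate Lemma~\ref{lem:addcurve_removesingleton} to eliminate singletons of $\valpha$, then swap roles, checking at each step that coherence and \eqref{noobstruction} propagate. The treatment of the ``moreover'' clauses is also the same.

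Where you diverge is in the verification that \eqref{noobstruction} survives an insertion. The paper argues by curve-tracing: if $\vb_R \in \partial W$, then some $\va \subset \valpha$ crosses $\vb$, enters $W$ through $\vb_R$, cannot exit through an $A$-boundary, and so by coherence must exit through a $B_L$-boundary; hence $B_L \neq \emptyset$. You instead take $\va$ to be the specific singleton for which $\vb$ was manufactured, observe that $\va$ lives entirely inside $W_0$, and use $|\algintersect(\va,\vb)| = \geomintersect(\va,\vb) > 0$ to conclude that $\vb$ is nonseparating in $W_0$. This buys both halves of \eqref{noobstruction} at once: the $A$-partition of $W_0 \setminus \vb$ is inherited verbatim from $W_0$, and both $\vb_L$ and $\vb_R$ appear in its $B$-partition. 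The paper's tracing argument addresses only the $B$-balance explicitly, so your route is arguably cleaner here.

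One minor repair: the clause ``Because $\vgamma'$ is disjoint from $\vb$, it cannot contain $\va$ \ldots, so $\va \subset S \setminus \vgamma'$'' only establishes $\va \notin \vgamma'$ as a \emph{member}; you still need $\va$ disjoint from $\vgamma'$ as a \emph{subset} of $S$. This holds because $\va$, being a component of $\valpha$ and still a singleton at this stage of the iteration, is disjoint from every other curve of $\valpha$ and from every curve on the current $\vbeta$-side, hence from all of $\vgamma'$. With that patched, your argument is complete.
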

\begin{proof}
The only thing yet to prove is that if $\vb$ is a curve obtained from Lemma \ref{lem:addcurve_removesingleton} then the pair $(\valpha, \vbeta \cup \vb)$ still satisfies \eqref{noobstruction}. This is also what allows us to iteratively apply Lemma \ref{lem:addcurve_removesingleton}.

So suppose that $\vgamma \subset \valpha \cup \vbeta \cup \{\vb\}$ is a multicurve and $W$ is a complementary subsurface of $S \setminus \vgamma$. Partition
\[\partial W = A_L \sqcup A_R \sqcup B_L \sqcup B_R.\]
If none of the boundary components of $W$ come from $\vb$, then the conclusion of \eqref{noobstruction} follows because $(\valpha, \vbeta)$ satisfies \eqref{noobstruction}.

Otherwise, without loss of generality, we may assume that $\vb \in B_R$.
Now by our choice of $\vb$, there is some curve $\va$ of $\valpha$ crossing $\vb$ from left to right. Since the pair $(\valpha, \vbeta \cup \vb)$ is coherent, and since $\va$ cannot cross either $A_L$ or $A_R$, we see that when $\va$ exits $W$ it must have done so by crossing a curve $\vb'$.
Coherence now implies that $\vb' \in B_L$, and in particular $B_L$ is nonempty.
\end{proof}

\subsection{Extension to filling}\label{subsec:connectsums}
We have reduced to the case where neither of our multicurves has any singletons with respect to the other; as a result, each component of $\valpha \cup \vbeta$ contains curves of both $\valpha$ and $\vbeta$.
We may now extend the pair to fill the surface.

\begin{lemma}\label{lem:nosingletons_extendtofill}
Suppose that $(\valpha, \vbeta)$ is a coherent pair so that every curve of $\valpha$ intersects a curve of $\vbeta$, and vice versa.
Then there exists a multicurve $\vbeta' \supset \vbeta$ so that $(\valpha, \vbeta')$ remains coherent and fills $S$.
\end{lemma}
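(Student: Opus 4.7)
The plan is to extend $\vbeta$ by iteratively adding simple closed curves, each step strictly reducing the complexity of $S \setminus (\valpha \cup \vbeta)$ until every complementary region is a disk. Since any curve disjoint from $\valpha$ has trivial (and hence coherent) intersections with $\valpha$, and since any curve built as a connect sum of existing $\vbeta$-curves along an arc disjoint from $\valpha$ inherits coherent intersections from its building blocks, we can always construct candidate curves to add while preserving coherence.

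Fix a non-disk complementary region $W$ of $S \setminus (\valpha \cup \vbeta)$. The alternation of $\valpha$- and $\vbeta$-arcs along each boundary circuit of $W$ (forced by the fact that at each intersection vertex the four incident edges alternate in type) together with the no-singletons hypothesis shows that each boundary circuit of $W$ contains at least one $\vbeta$-arc; in addition, $W$ cannot be an annulus, since then its two boundary circuits would be isotopic in $S$, forcing two distinct curves of $\valpha \cup \vbeta$ to be isotopic. If $W$ has positive genus or at least four boundary components, I would add to $\vbeta$ an essential curve $\vb'$ lying in the interior of $W$ and not isotopic to any curve of $\valpha \cup \vbeta$ (for instance, a non-separating curve in $W$ or a curve separating its boundary components into two groups), which is disjoint from $\valpha$ and therefore trivially coherent.

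The main obstacle is the pair-of-pants case, since every essential simple closed curve in a pair of pants is boundary-parallel. Here I would work in the spirit of Lemma~\ref{lem:addcurve_removesingleton}: pick an embedded arc $\mathfrak{a}$ in $W$ whose endpoints lie on $\vbeta$-arcs in two distinct boundary components of $W$, and let $\vb_1, \vb_2 \in \vbeta$ be the (possibly equal) curves containing those endpoints. The connect sum $\vb_1 +_{\mathfrak{a}} \vb_2$ (or a self-connect-sum when $\vb_1 = \vb_2$, yielding one or two new curves depending on the sides of $\vb_1$ to which $\mathfrak{a}$ is attached) has $\valpha$-intersections inherited from those of $\vb_1$ and $\vb_2$, hence coherent with $\valpha$; after a small push-off it is disjoint from $\vbeta$. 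Verifying that the resulting curves can be chosen to be non-isotopic to existing ones and that the complexity of the complement strictly decreases --- possibly requiring iteration of the connect-sum construction to process each pair of pants --- is the main technical point. Iterating to exhaustion produces the desired coherent filling extension $\vbeta' \supseteq \vbeta$.
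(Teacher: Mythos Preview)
Your overall strategy --- iteratively reduce the complexity of $S \setminus (\valpha \cup \vbeta)$ by adjoining curves to $\vbeta$, using connect sums to preserve coherence --- is the same as the paper's. But two steps do not go through as written.

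First, the dismissal of annular regions is incorrect. The two boundary circuits of an annular $W$ are concatenations of arcs from many curves of $\valpha \cup \vbeta$, not individual curves; their being isotopic in $S$ says nothing about any two curves of $\valpha \cup \vbeta$ being isotopic. Annular complementary regions can occur (for instance when $\valpha \cup \vbeta$ fills a proper subsurface of $S$ whose two boundary components happen to be isotopic in $S$), and the paper treats them alongside pairs of pants via the same connect-sum argument.

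Second, and this is where the real content of the lemma lies, your connect-sum step in the pair-of-pants case ignores orientations. The curve $\vb_1 +_{\mathfrak a} \vb_2$ admits an orientation agreeing with \emph{both} $\vb_1$ and $\vb_2$ only when $\mathfrak a$ meets them on matching sides (left-to-left or right-to-right; cf.\ the remark after Definition~\ref{def:connsum}). If the chosen $\vbeta$-arc on one boundary circuit has $W$ on its left while the chosen $\vbeta$-arc on another has $W$ on its right, then any orientation of the connect sum reverses one of $\vb_1, \vb_2$, and the $\valpha$-intersections inherited from that piece acquire the wrong sign --- coherence is lost. The paper uses the coherence hypothesis precisely here: it argues that every boundary circuit of $W$ contains a $\vbeta$-segment with $W$ on its \emph{right}, so the arc $\varepsilon$ can always be chosen with endpoints on matching sides and the connect sum inherits a coherent orientation. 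This is the essential use of coherence in the argument, and without it the inductive step fails.

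A smaller issue: in the higher-complexity case you add a curve $\vb'$ disjoint from $\valpha$, which creates a singleton and breaks the no-singletons hypothesis you invoke elsewhere. The paper sidesteps this by always connect-summing the interior curve $c$ with an existing $\vb \in \vbeta$, so that every curve added to $\vbeta$ already meets $\valpha$.
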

\begin{proof}
Let $Y$ denote the (possibly disconnected) subsurface obtained by removing $\valpha \cup \vbeta$.
Our proof proceeds by iteratively adding curves to reduce the size of $Y$.
The assumption that the curves of $\valpha$ and $\vbeta$ meet implies that each boundary component of $Y$ is a concatenation of segments of both $\valpha$ and $\vbeta$.
See Figure \ref{fig:addcurve nosingletons}.

\begin{figure}[hb]
    \centering
    \includegraphics[scale=.13]{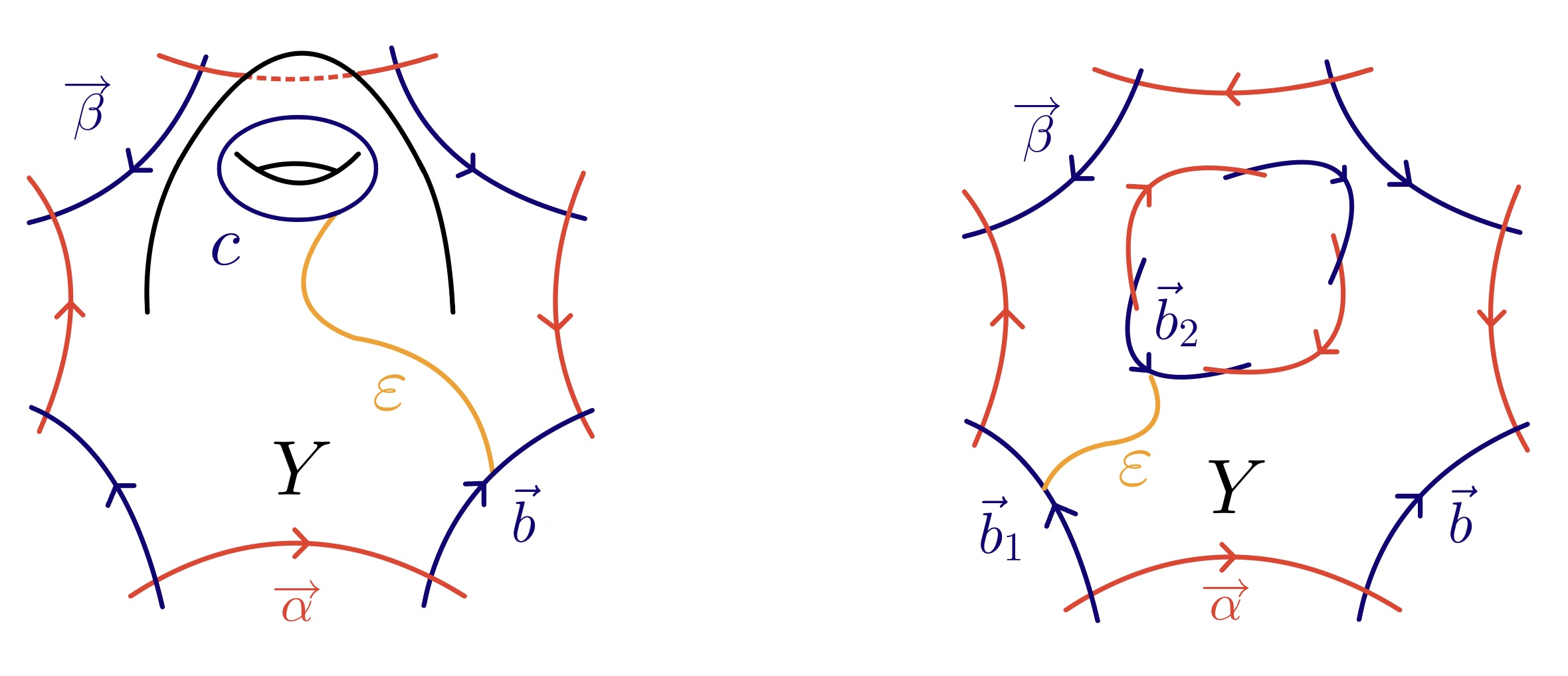}
    \caption{Adding curves to decrease the complexity of the complement of $\alpha \cup \beta$.}
    \label{fig:addcurve nosingletons}
\end{figure}

So as long as $Y$ is not an annulus or pair of pants, there is a non--boundary parallel simple closed curve $c$ on $Y$.
Pick an arbitrary segment of $\vbeta$ that comprises $\partial Y$ corresponding to a curve $\vb$ of $\vbeta$ and pick an arc $\varepsilon$ connecting $c$ to $\vb$ (and otherwise disjoint from $\alpha \cup \beta \cup c$).
We can then form the connect sum $\vb' = \vb +_\varepsilon c$ and orient it so that its orientation agrees with that of $\vb$ as it is running parallel to $\vb$. 
Since $\vb$ crosses $\valpha$ from right to left, so does $\vb'$.
Thus, the pair $(\valpha, \vbeta \cup \vb')$ remains coherent and we see that the Euler characteristic of each component of $\chi(Y \setminus (c \cup \varepsilon))$ is strictly greater than that of $Y$.
This follows because a neighborhood of $c \cup \varepsilon$ together with the relevant component of $\partial Y$ is homeomorphic to a pair of pants.

In the case that $Y$ is an annulus or pair of pants, all of its boundaries are concatenations of segments of $\valpha$ and $\vbeta$. The condition that $\valpha$ and $\vbeta$ are coherent further implies that each boundary component contains some segment of $\vbeta$ that has $Y$ on its right-hand side. Pick curves $\vb_1$ and $\vb_2$ (possibly equal) corresponding to these segments and $\varepsilon$ an arc in $Y$ connecting the segments.
The connect sum $\vb' = \vb_1 +_\varepsilon \vb_2$ is then naturally oriented and is coherent with $\valpha$. In particular, $\vb'$ has positive intersection number with $\valpha$ and so is not nulhomotopic.
The complement of $\vb' \cap Y$ in $Y$ is a disk and a surface homeomorphic to $Y \setminus \varepsilon$, so we see in these cases we can also add a curve to $\vbeta$ and increase the Euler characteristic of each piece of the complement.

Therefore, we may iteratively add curves to $\vbeta$ until every component of $Y$ has Euler characteristic $1$ (equivalently, until every component has no essential arcs), i.e., until every component of $Y$ is a disk.
\end{proof}

We can now put the pieces together to prove our main theorem.

\begin{proof}[Proof of Theorem \ref{theorem:exttv_oriented}]
The necessity of condition \eqref{noobstruction} was proven in Lemma \ref{lemma:obstruction}.

To see that condition \eqref{noobstruction} is sufficient, let $(\valpha, \vbeta)$ be a coherent pair satisfying \eqref{noobstruction}.
By Lemma \ref{lem:addcurve_removesingleton}, we may extend to a coherent pair $(\valpha', \vbeta')$ without any singleton curves.
Applying Lemma \ref{lem:nosingletons_extendtofill}, we can then find a $\vbeta'' \supset \vbeta'$ so that $(\valpha', \vbeta'')$ is a coherent filling pair.
Applying the Thurston--Veech construction (Construction \ref{construction:tv}) to $(\valpha', \vbeta'')$, we obtain a square-tiled surface on which the curves of $\valpha'$ (hence those of $\valpha$) are horizontal cylinders and the curves of $\vbeta''$ (hence those of $\vbeta$) are vertical cylinders.
\end{proof}

Analyzing the steps in our proof, we can similarly give a criterion for when an oriented multicurve can be realized as the entire horizontal foliation of a translation surface.

\begin{corollary}
\label{horizontals}
If $\valpha$ is an oriented multicurve, then there exists a horizontally periodic translation surface $(X, \omega)$ with directional foliation $\valpha$ (oriented in the $+x$-direction)
if and only if for each $\vgamma \subseteq \valpha$ and each complementary subsurface $W$ of $S \setminus \vgamma$, we have that 
\[A_L \neq \emptyset \text{ and } A_R \neq \emptyset\]
where $A_L$ ($A_R$) denotes the boundary components of $W$ arising from left- (right-)hand sides of curves of $\vgamma$.
\end{corollary}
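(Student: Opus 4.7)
The plan is to mirror the proof of Theorem \ref{theorem:exttv_oriented}, applied to the degenerate pair $(\valpha, \emptyset)$, while exploiting the ``moreover'' clause of Proposition \ref{prop:extendnosingletons} to ensure that no curves are ever added to $\valpha$ during the process. This asymmetry is essential: if we were to extend $\valpha$, the resulting square-tiled surface would acquire extra horizontal cylinders that we do not want.

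For necessity, I would argue just as in Lemma \ref{lemma:obstruction}. Given a horizontally periodic $(X,\omega)$ whose horizontal foliation is $\valpha$, each $\hol(\va)$ is a positive real number. For any non-empty $\vgamma \subseteq \valpha$ and complementary subsurface $W$ of $S \setminus \vgamma$, Stokes's theorem applied to $\omega|_W$ yields $\sum_{\va_L \in A_L} \abs{\hol(\va)} = \sum_{\va_R \in A_R} \abs{\hol(\va)}$, with the signs arising from comparing the $+x$-orientation on $\valpha$ against the induced boundary orientation on $\partial W$. Since every summand is strictly positive, one side vanishes if and only if the other does. As $\vgamma$ is non-empty, $\partial W = A_L \sqcup A_R$ is non-empty, forcing both $A_L$ and $A_R$ to be non-empty.

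For sufficiency, I would start from the pair $(\valpha, \emptyset)$, which is trivially coherent. The corollary's hypothesis translates directly into condition \eqref{noobstruction} for this pair: the $B_L, B_R$ halves are vacuous since $\vbeta = \emptyset$, and for any non-empty $\vgamma \subseteq \valpha$ the ``both non-empty'' hypothesis implies the biconditional demanded by \eqref{noobstruction}. Next, I would invoke Proposition \ref{prop:extendnosingletons}: since $\singletons_\valpha(\emptyset) = \emptyset$, its ``moreover'' clause yields an extension $(\valpha, \vbeta')$ with $\valpha$ unchanged, having no singletons and still satisfying \eqref{noobstruction}. Then Lemma \ref{lem:nosingletons_extendtofill} further extends to a coherent filling pair $(\valpha, \vbeta'')$, again leaving $\valpha$ unchanged.

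Finally, applying Lemma \ref{lemma:coherence-tv} together with the Thurston--Veech Construction \ref{construction:tv} produces the translation surface $\TV(\valpha, \vbeta'')$. Its horizontal cylinders correspond bijectively to the rows of unit squares—that is, to the curves of $\valpha$—so the surface is horizontally periodic with horizontal foliation exactly $\valpha$. The main conceptual obstacle I anticipate is ensuring that $\valpha$ forms the \emph{entire} horizontal foliation, not merely part of it; this is precisely what the ``moreover'' clause of Proposition \ref{prop:extendnosingletons} is designed to guarantee.
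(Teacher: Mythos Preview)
Your proposal is correct and follows essentially the same route as the paper's own proof: apply the machinery of Theorem \ref{theorem:exttv_oriented} to the degenerate pair $(\valpha,\emptyset)$, observe that the extension process never enlarges $\valpha$ (the paper phrases this via Lemmas \ref{lem:addcurve_removesingleton} and \ref{lem:nosingletons_extendtofill} rather than the packaged Proposition \ref{prop:extendnosingletons}, but it is the same observation), and conclude via the Thurston--Veech construction. Your necessity argument is actually more explicit than the paper's, which simply calls that direction ``immediate''; note that you are right to adapt rather than directly cite Lemma \ref{lemma:obstruction}, since that lemma's hypothesis $\geomintersect(\valpha,\vbeta)>0$ fails when $\vbeta=\emptyset$.
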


\begin{proof}
The hypothesis allows us to apply Theorem \ref{theorem:exttv_oriented} with $\vbeta$ empty. The ``only if'' part of this statement is immediate.
For the ``if'' part, we need to check that no curves are added to $\valpha$ during the extension process. Since $\vbeta$ is empty, it does not include any singletons, so neither Lemma \ref{lem:addcurve_removesingleton} nor Lemma \ref{lem:nosingletons_extendtofill} add any new curves to $\valpha$. Thus, we obtain a multicurve $\vbeta$ such that $\valpha$ and $\vbeta$ are coherent and filling, hence $\TV(\valpha, \vbeta)$ gives a translation surface on which $\valpha$ constitutes the entire horizontal foliation.
\end{proof}

By assigning orientations, we can also use Theorem \ref{theorem:exttv_oriented} to deduce our result about unoriented multicurves, which was stated in the Introduction as Theorem \ref{mainthm:parallel}.

\begin{corollary}
If $\alpha$ and $\beta$ are a pair of (unoriented) multicurves on $S$ with no curves in common, then they can be realized as a pair of parallel multicylinders if and only if the following hold:
\begin{enumerate}
    \item The multicurves $\alpha$ and $\beta$ are coherently orientable.
    \item No single component of $\alpha$ separates $S \setminus \beta$.
    \item No single component of $\beta$ separates $S \setminus \alpha$.
\end{enumerate}
\end{corollary}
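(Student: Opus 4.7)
The forward direction is immediate: Lemma \ref{1 parallel implies coherently orientable} yields (1), and Corollary \ref{cor:top version of obstruction} (applied both ways) yields (2) and (3).

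For the reverse direction, the plan is to construct a coherent filling pair extending $(\alpha, \beta)$, whence Lemma \ref{lemma:tv-directional-equivalence} and the Thurston--Veech construction produce the desired parallel realization. Begin with a coherent orientation $(\valpha, \vbeta)$ guaranteed by (1). Since curves of $\singletons_{\vbeta}(\valpha)$ and $\singletons_{\valpha}(\vbeta)$ are disjoint from the opposing multicurve, their orientations may be re-chosen freely without breaking coherence.

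The key step is to re-orient the singletons so that the digraphs appearing in the proof of Lemma \ref{lem:addcurve_removesingleton} are strongly connected on each connected component. By (2), no curve of $\singletons_{\vbeta}(\valpha)$ separates its component of $S \setminus \beta$, so the underlying undirected graph (vertices the components of $S \setminus (\vbeta \cup \singletons_{\vbeta}(\valpha))$, edges the singletons) is 2-edge-connected on each connected component. Robbins' theorem then provides a strongly connected orientation; re-orient the singletons of $\valpha$ accordingly, and apply the symmetric argument to $\singletons_{\valpha}(\vbeta)$ using (3).

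With strong connectivity in place, the construction in the proof of Lemma \ref{lem:addcurve_removesingleton}, iterated as in Proposition \ref{prop:extendnosingletons}, extends $(\valpha, \vbeta)$ to a coherent pair without singletons; Lemma \ref{lem:nosingletons_extendtofill} then further extends it to a coherent filling pair, and Construction \ref{construction:tv} yields the required translation surface. The main obstacle is the Robbins step, and in particular verifying that the 2-edge-connectivity supplied by (2), (3) is maintained through the iterative singleton-removal procedure so that each successive application of Lemma \ref{lem:addcurve_removesingleton} is justified.
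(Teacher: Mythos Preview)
Your forward direction and the Robbins-orientation idea for the singletons are exactly what the paper does. The gap is the one you flag yourself at the end: you have not shown that the iterative singleton-removal of Proposition \ref{prop:extendnosingletons} can be run, because that Proposition (and Lemma \ref{lem:addcurve_removesingleton}) take condition \eqref{noobstruction} as a hypothesis, not merely strong connectivity of one particular digraph. After you add a new $\vb$, the singleton set shrinks, the complementary pieces change, and the relevant digraph is a genuinely different graph; there is no direct reason its strong connectivity should be inherited from the Robbins orientation you chose at the outset.

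The paper avoids this difficulty entirely by not re-running the machinery. After orienting the non-singletons coherently and the singletons via Robbins, it verifies \eqref{noobstruction} once and for all, and then simply invokes Theorem \ref{theorem:exttv_oriented} as a black box. The verification of \eqref{noobstruction} splits on whether $\vgamma$ contains a non-singleton of $\valpha$: if so, an arc of $\vbeta_0$ must enter and exit $W$ through $A_L$ and $A_R$ respectively (by coherence); if not, $W$ corresponds to one side of an edge cut of the Robbins-oriented graph, and strong connectivity supplies edges in both directions. This is short, and it makes your ``main obstacle'' disappear, since all the iteration is already packaged inside Theorem \ref{theorem:exttv_oriented}.
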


\begin{proof}
Lemma \ref{1 parallel implies coherently orientable}
and Corollary \ref{cor:top version of obstruction} together prove that these conditions are necessary.

To prove that they are sufficient, let $\alpha$ and $\beta$ be a pair of unoriented multicurves satisfying the conditions.
Let $\alpha_0 = \alpha \setminus \singletons_{\beta} (\alpha)$ denote the non-singletons of $\alpha$ and let $\beta_0 \subset \beta$ be defined similarly.
By condition (1), we can orient $(\alpha_0, \beta_0)$ such that $(\valpha_0, \vbeta_0)$ are coherent.
It remains to orient the singletons so that \eqref{noobstruction} holds.

As in the proof of Lemma \ref{lem:addcurve_removesingleton}, consider the dual (undirected) graph $G$ of the multicurve $\singletons_{\beta} (\alpha)$ on $S \setminus \beta$.
By condition (2), each component $G_0$ of $G$ is 2-edge-connected, that is, removing any edge does not separate $G_0$.
Thus, by Robbins's Theorem
there exists a choice of orientation for the edges of $G_0$ so that the resulting directed graph is strongly connected.
Choose one such orientation for each $G_0$, and orient each $a \in \singletons_{\beta} (\alpha)$ so that there is an edge from $W_i \subset S \setminus (\beta \cup \singletons_{\beta} (\alpha))$ to $W_j$ if $W_i$ is on the left and $W_j$ lies on the right of $\vec{a}$.
Orient the singletons of $\beta$ similarly.

Now let $ \vgamma \subseteq \valpha \cup \vbeta$ and let $W$ be a complementary subsurface of $S \setminus \vgamma$.
Let us prove that $A_L \neq \emptyset$ if and only if $A_R \neq \emptyset$; 
the proof for $B_L$ and $B_R$ is completely analogous.
If $\vgamma \cap \valpha_0$ is nonempty, then $W$ must meet some curve of $\vbeta_0$. Consider an arc of $\vbeta_0|_W$; since $\vbeta_0$ always crosses $\valpha_0$ from right to left, this means that it must enter $W$ when it meets a curve of $A_L$ and leave $W$ when it meets a curve of $A_R$. Thus $A_L$ and $A_R$ must both be nonempty.

Otherwise, suppose that $\vgamma \cap \valpha_0 = \emptyset$.
Of course, if $\vgamma \cap \singletons_{\vbeta}(\valpha)$ is also empty then we are done, so assume there is some boundary component of $W$ that is a singleton of $\alpha$.
In this case, then we see that $W$ is a union of pieces of $S \setminus (\beta \cup \singletons_{\vbeta}(\alpha))$, i.e., it is a component of a cut of $G_0$.
But now since $G_0$ is strongly connected, there are edges both entering and exiting this component, hence both $A_L$ and $A_R$ are nonempty.

Therefore, we have shown that we can orient $\alpha$ and $\beta$ so that \eqref{noobstruction} holds; applying Theorem \ref{theorem:exttv_oriented} completes the proof of the Corollary.
\end{proof}

\section{Coherence and grafting} \label{sec:total-coherence-and-grafting}
In the previous section, we characterized when two (un)oriented multicurves could be realized simultaneously as (parallel) directional multicyinders.
In this one, we relax this requirement, giving necessary and sufficient conditions to simultaneously realize two multicurves as multicylinders with just one being (parallel) directional.
As in the previous section, we prove a stronger statement for multicurves with prescribed orientations; compare Theorem \ref{theorem:grafting} below.

See Figure \ref{fig:theoeg} for an example showing that the conditions of Theorem \ref{mainthm:parallel} are too strong if one only requires a single multicylinder to be parallel.

\begin{figure}[ht]
    \centering
    \includegraphics[scale=.18]{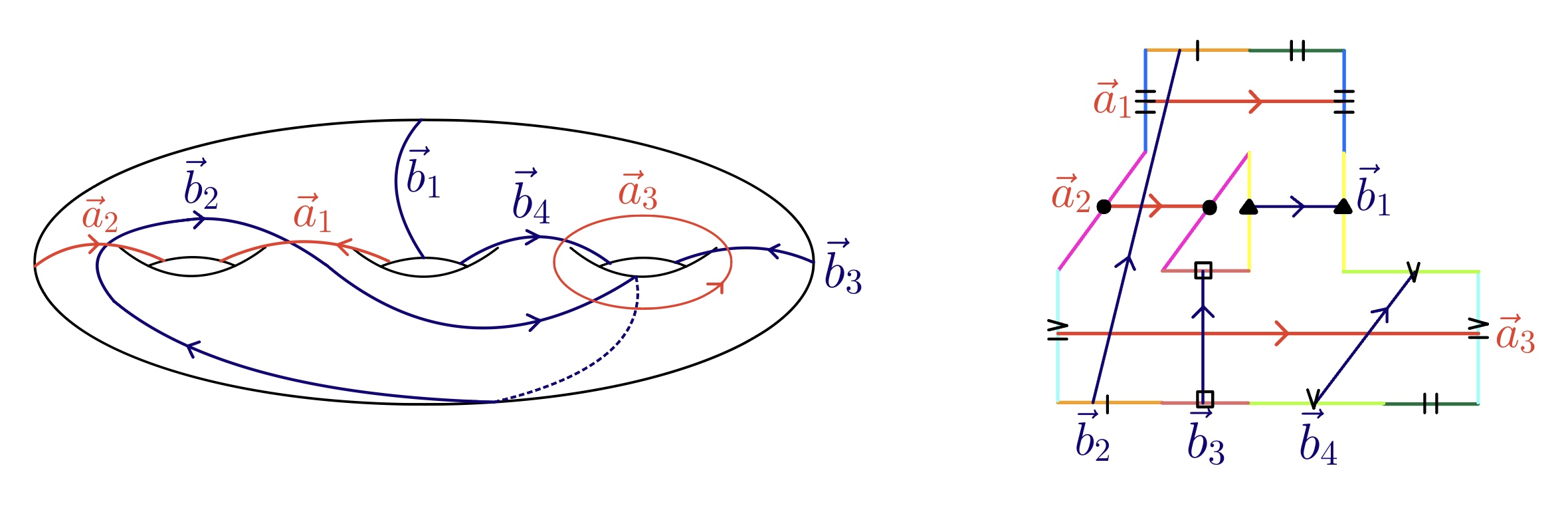}
    \caption{A pair of coherent multicurves that cannot be jointly realized as parallel multicylinders, together with a realization with one of the multicylinders parallel. Observe that even if one repartitions and adds the singleton of $\beta$ to $\alpha$, then the pair is still not realizable as a pair of parallel multicylinders.}
    \label{fig:theoeg}
\end{figure}

As in the previous section, we will fix the convention that given two coherent multicurves $\valpha$ and $\vbeta$, the curves of $\vbeta$ cross the curves of $\valpha$ from right to left.

\subsection{Geodesics on translation surfaces}
We begin by recording a simple lemma about how cylinders intersect other geodesics on translation surfaces. We recall that if a curve $b$ is {\em not} realized as a cylinder on $(X, \omega)$, then it has a unique geodesic representative which is a concatenation of saddle connections.

One of the handy things about cylinders is that their core curves are always in minimal position with respect to other geodesics.

 \begin{lemma}
\label{minimal position}
Let $a$ be a nonsingular core curve of a cylinder on some translation surface $(X,\omega)$ and let $b$ be the geodesic representative of a non-cylinder curve on $(X,\omega)$.
Then $a$ and $b$ are in minimal position.
\end{lemma}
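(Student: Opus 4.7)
The plan is to combine the bigon criterion with a Gauss--Bonnet computation. First I would suppose, for contradiction, that $a$ and $b$ are not in minimal position. Since $a$ is disjoint from the cone points and two flat geodesics can only be tangent where they pass through a cone point, every intersection of $a$ with $b$ is transverse. The bigon criterion would then produce an innermost embedded bigon $D \subset (X,\omega)$ whose boundary is $a' \cup b'$, where $a' \subset a$ and $b' \subset b$ are subarcs meeting only at the two bigon vertices $p$ and $q$.

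Next I would apply Gauss--Bonnet to $D$, whose Euler characteristic is $1$. Because $a$ is nonsingular, $a'$ is a single straight segment containing no cone points. Because $b$ is the geodesic representative, at every cone point lying in the interior of $b'$ both sides of the turn subtend an angle of at least $\pi$ (this is the defining local-minimality property of a flat geodesic); the interior angle of $D$ at such a point is therefore at least $\pi$, giving a non-positive corner contribution. Interior cone points of $D$ contribute $2\pi - \theta_p \leq 0$, since cone angles on a translation surface are of the form $2\pi k$ with $k \geq 1$. At the two bigon vertices, the interior angles $\alpha_p, \alpha_q$ lie in $(0,\pi)$ by transversality, so they contribute exterior angles $\pi - \alpha_p$ and $\pi - \alpha_q$.

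Summing these contributions, Gauss--Bonnet reads
\[
(\text{non-positive curvature}) + (\pi - \alpha_p) + (\pi - \alpha_q) + (\text{non-positive corner terms on }b') = 2\pi,
\]
which forces $(\pi - \alpha_p) + (\pi - \alpha_q) \geq 2\pi$ and hence $\alpha_p + \alpha_q \leq 0$. This contradicts $\alpha_p, \alpha_q > 0$, so no bigon can exist and $a$ and $b$ are in minimal position.

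The main obstacle I anticipate is bookkeeping rather than insight: one has to verify carefully that $a'$ contains no cone points, that each cone point lying on the interior of $b'$ produces a non-positive corner contribution, and that the two endpoints of $a'$ are honest transverse intersection points with interior angles strictly in $(0,\pi)$. Once these three checks are in place, the inequality above is forced and the contradiction is immediate.
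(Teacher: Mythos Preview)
Your Gauss--Bonnet argument is correct, and the bookkeeping you outline suffices: the bigon vertices lie off the cone set (since $a$ is nonsingular), so the interior angles there are genuine Euclidean angles in $(0,\pi)$; every corner in the interior of $b'$ has interior angle $\ge \pi$ by local length-minimality of $b$; and interior cone points contribute $2\pi-\theta\le 0$ because cone angles on a translation surface are integer multiples of $2\pi$.

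The paper takes a different, more elementary route. Given a bigon with sides $c_a\subset a$ and $c_b\subset b$, it observes that $(b\setminus c_b)\cup c_a$ is isotopic to $b$; since $b$, being a non-cylinder curve, is the \emph{unique} shortest representative of its isotopy class, this forces $|c_b|<|c_a|$. But then $(a\setminus c_a)\cup c_b$ is a representative of $a$ strictly shorter than the geodesic $a$, a contradiction. This length-swap argument is shorter and uses no curvature computation, but it leans directly on the uniqueness of the geodesic representative of $b$ (which is exactly where the non-cylinder hypothesis enters). Your approach avoids that hypothesis entirely---it would apply verbatim if $b$ were also a cylinder core curve---and generalizes immediately to any flat cone metric whose cone angles are all at least $2\pi$.
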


In particular, if the geometric intersection number of the isotopy classes of $a$ and $b$ is $0$, then the actual geodesics $a$ and $b$ are disjoint.

\begin{proof}
Because $a$ is the core curve of a cylinder, it contains no saddle connections. The curve $b$ is a concatenation of saddle connections, so $a$ and $b$ are transverse.
 
Assume for contradiction that $a$ and $b$ are not in minimal position. Then $a$ and $b$ bound a bigon \cite[Proposition 1.7]{FarbMarg}.
Consider the two arcs $c_a \subset a$ and $c_b \subset b$ constituting edges of this bigon; since $a$ and $b$ are transverse, $c_a \neq c_b$.
Now $c_a$ and $c_b$ are isotopic rel endpoints, so the surgered curve $(b \setminus c_b) \cup c_a$ is isotopic to $b$. Because $b$ is the unique shortest representative of its isotopy class, $c_b$ must be strictly shorter than $c_a$. However, this implies that the curve $(a \setminus c_a) \cup c_b$ is isotopic to and shorter than $a$, which is a contradiction.
Thus $a$ and $b$ are in minimal position.
\end{proof}

\begin{remark}
Lemma \ref{minimal position} is not true for all pairs of geodesics on $(X, \omega)$.
For example, there may be curves $a$ and $b$ which have geometric intersection number $0$ but whose geodesic representatives share a saddle connection. Compare with the discussion in \cite[Section 3]{Rafi}.
\end{remark}

If $I \subset [0, 2\pi)$ is an interval (with any mix of conditions at its endpoints), then we say that $\vec{s}$
is an $I$-{\em saddle connection} if $\text{arg}(\hol(\vec{s})) \in I$.

We now state and prove an analogue of Lemma \ref{1 parallel implies coherently orientable} for non-cylinder curves.

\begin{corollary}\label{cor:realize directional implies angles}
Suppose that $(X, \omega)$ is a horizontally periodic translation surface and let $\valpha$ denote the core curves of the horizontal cylinders, oriented in the $+x$ direction.
Let $\vb$ be any oriented curve coherent with $\valpha$ and suppose that $\vb$ is not realized as a cylinder on $(X,\omega)$.
Then the geodesic representative of $\vb$ is a concatenation of $[0,\pi]$ saddle connections.
\end{corollary}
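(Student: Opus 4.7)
The plan is to decompose the geodesic representative of $\vb$ into its constituent saddle connections $s_1, \ldots, s_n$ and to bound $\arg(\hol(s_i))$ separately for each $i$. Horizontal saddle connections lie on the boundaries of horizontal cylinders, so they have $\arg(\hol(s_i)) \in \{0, \pi\} \subset [0, \pi]$. The content of the claim is then that every non-horizontal $s_i$ has $\arg(\hol(s_i)) \in (0, \pi)$.

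The next step is to observe that each non-horizontal $s_i$ traverses a finite sequence of horizontal cylinders, entering and exiting each through its two boundary components, and that all of its crossings with horizontal core curves have the same sign: positive if $\arg(\hol(s_i)) \in (0, \pi)$, negative if $\arg(\hol(s_i)) \in (\pi, 2\pi)$. This uses only that cone points live on cylinder boundaries and that a non-horizontal straight-line geodesic inside a flat horizontal cylinder must connect its two boundaries at a fixed angle. In particular, every non-horizontal $s_i$ meets $\valpha$, while every horizontal $s_i$ is disjoint from $\valpha$.

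By Lemma \ref{minimal position} the geodesic $\vb$ is in minimal position with each core curve $\va_j$, so $\geomintersect(\va_j, \vb)$ is the total number of crossings of $\va_j$ by the $s_i$'s and $\algintersect(\va_j, \vb)$ is their signed count. Applying the triangle inequality twice,
\[\sum_j \geomintersect(\va_j, \vb) \;\ge\; \sum_j \abs{\algintersect(\va_j, \vb)} \;\ge\; \Bigl| \sum_j \algintersect(\va_j, \vb) \Bigr|.\]
Coherence of $(\valpha, \vb)$ forces the left and right quantities to agree, so both inequalities are in fact equalities. Equality in the first forces $\abs{\algintersect(\va_j, \vb)} = \geomintersect(\va_j, \vb)$ for each $j$, meaning every non-horizontal saddle connection crossing the $j$th cylinder does so with a common sign; equality in the second forces those signs to be uniform across $j$. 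The convention $\algintersect(\valpha, \vb) \ge 0$ fixed at the end of Section \ref{sec:coherent-filling-pairs-of-multicurves} pins this sign to be positive, so every non-horizontal $s_i$ has $\arg(\hol(s_i)) \in (0, \pi)$ and the claim follows.

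The main obstacle I anticipate is the assertion in the second paragraph that a single non-horizontal saddle connection contributes the same sign to every core-curve crossing it makes; once this is in hand, the rest is a triangle-inequality bookkeeping exercise using minimal position and the coherence hypothesis.
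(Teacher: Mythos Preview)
Your proof is correct and follows essentially the same approach as the paper: both invoke Lemma \ref{minimal position} to equate actual crossings with geometric intersection number, then use horizontal periodicity to ensure every non-horizontal saddle connection meets $\valpha$, and finally use coherence (together with the sign convention from the end of Section \ref{sec:coherent-filling-pairs-of-multicurves}) to force those crossings to all be positive. The paper compresses your triangle-inequality bookkeeping into a single sentence, but the underlying argument is the same.
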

\begin{proof}
Lemma \ref{minimal position} says $\valpha$ and $\vb$ realized on $(X,\omega)$ are in minimal position, so there are no bigons. Thus, the algebraic and geometric intersection numbers of the isotopy classes of $\valpha$ and $\vec{b}$ agree with those of their geodesic representatives.
Since $(X, \omega)$ is horizontally periodic, $\valpha$ meets any non-horizontal saddle connection.
Thus, since $\valpha$ and $\vec{b}$ are coherent, any non-horizontal saddle connection comprising $\vb$ must be a $(0,\pi)$-saddle connection. This completes the proof.
\end{proof}

\subsection{Flat grafting}
For the remainder of this section, when we refer to a curve on $(X,\omega)$, we mean its geodesic representative.

Given a pair of coherent multicurves $(\valpha,\vbeta)$, our strategy to realize them as multicylinders is to find a translation surface $(X,\omega)$ on which $\valpha$ is a horizontal multicylinder, and then deform $(X,\omega)$ to make each curve of $\vbeta$ into a cylinder while ensuring $\valpha$ remains horizontal.
A deformation that accomplishes the second step is {\em horizontal grafting}, which was introduced in \cite{fu2018flat}.
We direct the reader to the original paper \cite{fu2018flat} for a more thorough discussion of this procedure.

\begin{construction} [Horizontal Grafting]
\label{construction:grafting}
Let $(X,\omega)$ be a translation surface and suppose that $c$ is a geodesic curve on $(X,\omega)$ not isotopic to the core curve of any cylinder.
Let $\{s_1,...,s_l\}$ denote the saddle connections appearing in $c$, counted without multiplicity.
Cutting $(X,\omega)$ along $c$ results in a translation surface with piecewise-geodesic boundary, two geodesic segments for each $s_i$.

Suppose for the moment that no $s_i$ is horizontal. Then the {\em grafting} of $(X,\omega)$ along $c$ (by some distance $t$) is obtained by gluing each copy of $s_i$ in $X \setminus c$ to a parallelogram with sides $\langle t, 0 \rangle$ and $s_i$, then gluing the horizontal sides of the parallelograms together according to how $c$ runs along the $s_i$.
See Figure \ref{fig: deformation iteration}.

If some $s_i$ is horizontal, then there are two different choices for how to define the grafting of $(X,\omega)$ along $c$.
In either case, the grafted surface is obtained as a limit of rotating $(X,\omega)$ (either counterclockwise or clockwise) by a small bit, grafting, and then rotating back.
This can be thought of as ``shearing'' the surface along that saddle and identifying segments of $s_i$ with the horizontal edges of non-horizontal parallelograms.
\end{construction}

\begin{remark}\label{rmk:graftingandmarking}
When $(X,\omega)$ has a marking $\marking: S \to X$, the grafted surface $(X',\omega')$ inherits a marking $\marking': S \to X'$ since the grafting procedure takes place entirely in a neighborhood of $c$.
However, depending on the angle that $c$ makes at each cone point, grafting may split apart cone points of $(X,\omega)$, changing which stratum it lives in while preserving the genus of the surface.
Therefore, there is no canonical way to mark both the translation surface {\em and its zeros} that is consistent under grafting.
This is one of the reasons that we have decided to focus on the realizability of curves on a closed surface as cylinders, as opposed to considering curves relative to zeros.
\end{remark}

We record below a number of properties of grafting, proofs of which can be found in \cite{fu2018flat}.

\begin{lemma}
\label{Fu properties of grafting}
Let all notation be as above. Orient $c$ arbitrarily, inducing an orientation on each $s_i$.
\begin{enumerate}
\item If $c$ is a concatenation of $I$-saddle connections, where $I$ is a proper closed subinterval of $[0,\pi]$, then there is a choice of horizontal grafting such that the grafted surface is a translation surface.
\item If $c$ contains at least one non-horizontal saddle connection, then there exists some distance $t$ such that $c$ is realized as a cylinder on the grafting of $(X,\omega)$ along $c$ by $t$.
\end{enumerate}
\end{lemma}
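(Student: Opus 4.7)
The plan is to analyze the geometric structure of the parallelograms $P_i$ inserted by Construction \ref{construction:grafting}, treating the two parts separately. Both arguments hinge on viewing the grafted region as a cyclic chain of parallelograms glued along horizontal sides, forming an annulus whose boundary components are the two copies of $c$ produced by cutting $X$.

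For part (1), I would use the proper subinterval condition to coherently orient the saddle connections $s_i$. Since $I \subsetneq [0,\pi]$ is closed, at least one of the endpoints $0$ or $\pi$ lies outside $I$; say $\pi \notin I$, so after a small rotation of the surface one may assume $I \subseteq [0, \pi - \delta]$ for some $\delta > 0$. This allows us to orient each $s_i$ with $\arg(\hol(s_i)) \in [0, \pi - \delta]$. For non-horizontal $s_i$, the two $s_i$-sides of the non-degenerate parallelogram $P_i$ are identified with the two copies of $s_i$ in $\partial(X \setminus c)$ by pure translations, because the coherent orientation rules out rotation by $\pi$. For horizontal $s_i$ the parallelogram $P_i$ degenerates to a segment; the condition $\pi \notin I$ then selects the upward-shearing limit as the canonical choice of grafting. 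With these consistent choices, every transition function of the resulting atlas is a translation, so the grafted surface is indeed a translation surface.

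For part (2), the idea is to show that the inserted annulus contains a closed geodesic in direction $\hol(c)$ for all $t$ sufficiently large. Placing the parallelograms sequentially so that the bottom-left corner of $P_{i+1}$ lies at $(X_i + a_i, Y_i + b_i)$, where $\hol(s_i) = (a_i, b_i)$, the total accumulated displacement after traversing all $l$ parallelograms equals $\hol(c)$, giving the correct periodicity. Hence a straight line launched from $P_1$ in direction $\hol(c)$ remains geodesic within each $P_i$, crosses the horizontal gluings between successive $P_i$, and closes up after one full loop around the annulus. Cone points occur only at parallelogram vertices (on the annulus boundary), so candidate starting positions that avoid cone points form an open set. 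The hypothesis that some $s_i$ is non-horizontal ensures the annulus has positive area, and since increasing $t$ widens each parallelogram horizontally without altering the relative skew, the admissible set grows linearly in $t$. For $t$ sufficiently large, this set is a nonempty open interval, yielding a one-parameter family of closed geodesics representing $c$, that is, a cylinder.

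The main obstacle I expect is in part (2): precisely quantifying how the admissible set of starting positions for cone-point-avoiding closed geodesics depends on $t$ when the $s_i$ have widely varying directions. One must track the image of an interval of initial positions as the candidate line crosses from $P_i$ to $P_{i+1}$ at each horizontal gluing, and verify that each such transition contracts the interval by at most a bounded additive amount controlled by the $a_i$, while the starting range has width proportional to $t$. Once this inductive estimate is in place, choosing $t$ larger than the accumulated shrinkage produces a nonempty admissible interval and hence the desired cylinder representative.
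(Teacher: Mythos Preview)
The paper does not actually prove this lemma: it offers only a two-line sketch of (1), indicating which branch of the grafting construction to select (counterclockwise if $0\in I$, clockwise if $\pi\in I$, the unique choice otherwise), and then defers both the verification that the result is a translation surface and the entirety of (2) to Fu's original paper. Your treatment of (1) is in the same spirit as that sketch but supplies more of the geometry; that part is fine.

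Your plan for (2) is considerably more than the paper provides, and the strategy---show that the inserted region carries a straight closed geodesic in direction $\hol(c)$ once $t$ is large---is the natural one. One point to watch: in Construction~\ref{construction:grafting} the saddle connections $s_1,\dots,s_l$ are listed \emph{without multiplicity}, so a single parallelogram $P_i$ may be visited several times as one travels once around $c$. Your sentence ``the total accumulated displacement after traversing all $l$ parallelograms equals $\hol(c)$'' presumes each $s_i$ occurs exactly once along $c$; in general the staircase in the developing map should be indexed by the sequence of saddle connections along $c$ with repetition, not by $1,\dots,l$. With that adjustment your developing-map picture and the shrinkage estimate you outline go through as stated.
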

\begin{proof}[Sketch of (1)]
If $I$ contains neither $0$ nor $\pi$, then there is only one choice of horizontal grafting. 
If it contains $0$, then we choose the counterclockwise direction, and if it contains $\pi$, the clockwise direction.
That the resulting surface is a translation surface was proved in \cite{fu2018flat}.
\end{proof}

Disjointness of each curve in $\vbeta$ allows us to graft along each curve in $\vbeta$ without interfering with the cylindrical property of the previously grafted curves in $\vbeta$. This is because grafting is a local procedure.

\begin{lemma}
\label{local_procedure}
Let $b_1$ be a core curve of a cylinder and $b_2$ a non-cylinder curve on some translation surface $(X,\omega)$. If $\geomintersect(b_1,b_2) = 0$, then $b_1$ is realized as a cylinder with the same slope as before on any surface obtained by grafting along $b_2$.
\end{lemma}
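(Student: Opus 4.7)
The plan is to show that after grafting along $b_2$, the curve $b_1$ still has an undisturbed tubular neighborhood which contains it as a nonsingular closed geodesic of the same slope as before. The key input will be Lemma \ref{minimal position}, which lets us arrange for $b_1$ and $b_2$ to be disjoint as geodesics.

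First I would fix a nonsingular geodesic representative of $b_1$, which exists because $b_1$ is the core curve of a cylinder, and take the geodesic representative of $b_2$, which is a concatenation of saddle connections. By Lemma \ref{minimal position}, these two geodesics are in minimal position. Since $\geomintersect(b_1,b_2) = 0$, minimal position forces them to be actually disjoint on $(X,\omega)$.

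Next, because $b_1$ is a nonsingular closed geodesic inside an embedded Euclidean cylinder $C$, the parallel nonsingular closed geodesics in $C$ foliate an open neighborhood of $b_1$. Since $b_1$ is disjoint from the (compact) geodesic representative of $b_2$, I can choose some $\varepsilon > 0$ so that the subcylinder $C_\varepsilon \subset C$ of width $2\varepsilon$ centered on $b_1$ is still disjoint from $b_2$. This $C_\varepsilon$ is an embedded Euclidean cylinder whose core curve is $b_1$ and whose slope agrees with that of $b_1$.

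Finally, I would invoke the locality of Construction \ref{construction:grafting}: the grafted surface agrees with $(X,\omega)$ outside of an arbitrarily small neighborhood of $b_2$ (the cutting and insertion of parallelograms occurs entirely along $b_2$). Choosing this neighborhood small enough that it is disjoint from $C_\varepsilon$, the subcylinder $C_\varepsilon$ embeds isometrically into the grafted surface, so $b_1$ is realized there as a cylinder of unchanged slope. The only subtle point to check is that this locality persists in the edge case where some saddle connections of $b_2$ are horizontal, where grafting is defined via a rotate–graft–unrotate limit; but even in that case the modification is supported in an arbitrarily small neighborhood of $b_2$, so the disjoint subcylinder $C_\varepsilon$ is unaffected. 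I expect this last edge case to be the only mildly tricky point, but it dissolves because all the neighborhoods involved can be shrunk to lie within an $\varepsilon$-tube around $b_2$.
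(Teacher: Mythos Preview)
Your proof is correct and follows essentially the same approach as the paper: invoke Lemma \ref{minimal position} to get that the geodesic representatives of $b_1$ and $b_2$ are actually disjoint, then use the locality of grafting (the complement of $b_2$ embeds isometrically into the grafted surface) to conclude that the cylinder containing $b_1$ survives with the same slope. Your version is slightly more explicit about passing to a subcylinder $C_\varepsilon$ and about the horizontal-saddle edge case, but these are elaborations of the same idea rather than a different route.
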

\begin{proof} 
By Lemma \ref{minimal position}, $b_1$ and $b_2$ are in minimal position. Hence, since $\geomintersect(b_1,b_2) = 0$, their geodesic representatives do not intersect. 
Recall that when we graft horizontally along $b_2$, we cut along $b_2$ and glue in a parallelogram $p_i$ adjacent to each saddle connection $s_i$ on $b_2$. In particular, $(X, \omega) \setminus b_2$ is isometric to the complement of the $p_i$'s in the grafted surface.
Thus, $b_1$ remains a cylinder on any grafting of $b_2$.
\end{proof}


Because curves in $\vbeta$ are disjoint, we can now take the translation surface which realizes $\valpha$ as the horizontal foliation and graft horizontally along each curve of $\vbeta$ until $(\valpha,\vbeta)$ are multicylinders. This is shown in the backwards direction of Theorem \ref{theorem:grafting}.

In what follows, a triple of oriented multicurves $(\vgamma_1,\vgamma_2,\vgamma_3)$ is coherent if every pair $(\vgamma_i,\vgamma_j)$ is coherent.

\begin{figure}
    \centering
    \def\svgwidth{\columnwidth}
    \includegraphics[scale=.18]{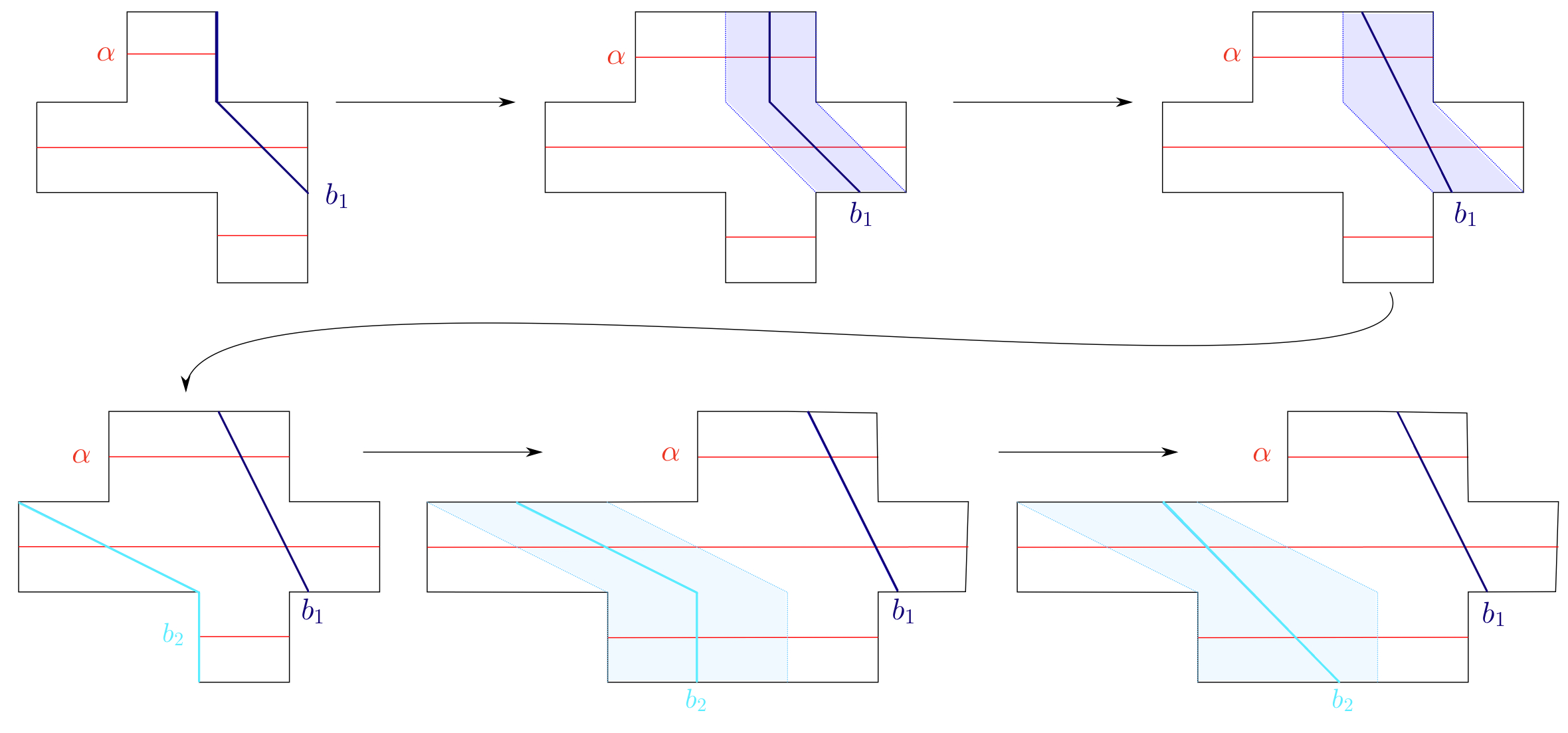}
    \caption{Example of grafting along two curves of $\beta$, $b_1$ and $b_2$ to realize them as cylinders on a translation surface where $\alpha$ is realized as horizontal cylinders}
    \label{fig: deformation iteration}
\end{figure}

\begin{theorem}
\label{theorem:grafting}
Let $(\valpha , \vbeta)$ be a pair of multicurves on surface $S$ which contains no separating curves.
Then $(\valpha , \vbeta)$ can be realized as a pair of multicylinders with $\valpha$ a directional multicylinder on some translation surface if and only if there exists a filling pair $(\valpha', \vec{\gamma})$ such that $\valpha \subset \valpha'$ and $(\valpha', \vgamma, \vb)$ is coherent for each $\vb \subset \vbeta$.
\end{theorem}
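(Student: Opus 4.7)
The plan is to prove the biconditional by handling the two directions separately. The backward (sufficiency) direction is the substantive one, using Fu's horizontal grafting to upgrade the Thurston--Veech surface for $(\valpha', \vgamma)$ into a translation surface on which each $\vb \subset \vbeta$ is realized as a cylinder while $\valpha$ remains a horizontal directional multicylinder. The forward (necessity) direction follows the pattern of the proof of Lemma \ref{lemma:tv-directional-equivalence}, combining period-coordinate density with Lemma \ref{lem:cylinderspersist}.

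For the backward direction, I would first apply Lemma \ref{lemma:coherence-tv} to the coherent filling pair $(\valpha', \vgamma)$ to build the Thurston--Veech translation surface $(X, \omega) = \TV(\valpha', \vgamma)$, on which $\valpha' \supset \valpha$ comprises the horizontal cylinders and $\vgamma$ comprises the vertical cylinders. For each $\vb \subset \vbeta$, its geodesic representative on $(X, \omega)$ is either already a cylinder core curve, in which case nothing more is needed, or else a concatenation of saddle connections. In the latter case, applying Corollary \ref{cor:realize directional implies angles} twice---once with the horizontal foliation $\valpha'$ in the role of the cylinders, and once after a $-\pi/2$ rotation so that $\vgamma$ plays that role---forces every saddle connection of $\vb$ to have argument in $[0,\pi] \cap [\pi/2, 3\pi/2] = [\pi/2, \pi]$. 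This is a proper closed subinterval of $[0, \pi]$, so Lemma \ref{Fu properties of grafting}(1) guarantees that horizontal grafting along $\vb$ stays within the category of translation surfaces, and Lemma \ref{Fu properties of grafting}(2) provides a grafting distance $t$ that makes $\vb$ a cylinder (with a short aside for the degenerate subcase in which all of $\vb$'s saddle connections are horizontal: then $\vb$ is isotopic to a core curve already in $\valpha'$, so $\vb$ is automatically realized as a cylinder). Iterating over $\vbeta$, pairwise disjointness combined with Lemma \ref{local_procedure} ensures already-grafted $\vbeta$-cylinders persist, while the locality of horizontal grafting keeps $\valpha$ horizontal throughout.

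For the forward direction, suppose $(\valpha, \vbeta)$ is realized on $(X, \omega)$ as described. Mimicking the period-coordinate argument in Lemma \ref{lemma:tv-directional-equivalence}, the equations $\Im \hol(\va) = 0$ for $\va \subset \valpha$ cut out an $\RR$-linear subspace $V$ of the local period chart, and the positivity conditions $\Re \hol(\va) > 0$ carve out a relatively open cone $U \subset V$ containing $(X, \omega)$. After shrinking $U$ so that Lemma \ref{lem:cylinderspersist} ensures every $\vb$-cylinder persists, the density of rational points in $V$ produces a square-tiled $(X', \omega') \in U$ on which $\valpha$ is still horizontal and every $\vb \subset \vbeta$ is still a cylinder. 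Setting $\valpha'$ and $\vgamma$ to the full horizontal and vertical multicurves of $(X', \omega')$ (with appropriate orientations), $(\valpha', \vgamma)$ is a coherent filling pair extending $\valpha$ by Lemma \ref{lemma:coherence-tv}, and the coherences $(\valpha', \vb)$ and $(\vgamma, \vb)$ follow from Lemma \ref{1 parallel implies coherently orientable} applied to the parallel multicylinders $\valpha'$ and $\vgamma$ against each cylinder $\vb$.

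The main obstacle lies in the backward direction: verifying that iterated horizontal grafting produces a single translation surface on which all of $\valpha \cup \vbeta$ is simultaneously realized as multicylinders. The crucial input is that the triple coherence hypothesis forces each non-cylinder $\vb$'s saddle connections into the proper subinterval $[\pi/2, \pi] \subset [0, \pi]$---exactly the condition Fu's grafting needs to output a translation (rather than half-translation) surface. Once this angle constraint is established, locality and disjointness assemble the iteration essentially automatically.
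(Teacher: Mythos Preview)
Your plan matches the paper's: backward direction via Thurston--Veech on $(\valpha', \vgamma)$ followed by iterated horizontal grafting along the curves of $\vbeta$; forward direction via rational-point density in period coordinates to pass to a square-tiled surface and read off $(\valpha',\vgamma)$ as its horizontal and vertical cylinders.

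There is, however, a genuine error in your angle computation. You assert that Corollary~\ref{cor:realize directional implies angles} applied twice pins the saddle connections of $\vb$ to $[0,\pi] \cap [\pi/2, 3\pi/2] = [\pi/2,\pi]$. That conclusion requires $\vb$ to cross both $\valpha'$ and $\vgamma$ with \emph{positive} sign, but the coherence hypothesis on the triple $(\valpha', \vgamma, \vb)$ only guarantees that each pairwise intersection has a consistent sign, not that both signs are positive---and flipping $\vb$ reverses both signs at once, so you cannot normalize both. The correct statement, which the paper gives, is that the saddle connections of $\vb$ lie in exactly one of the four quarter-arcs $[0,\pi/2]$, $[\pi/2,\pi]$, $[\pi,3\pi/2]$, $[3\pi/2,2\pi]$, according to the two signs. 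The rest of your argument survives, since each quarter-arc becomes a proper closed subinterval of $[0,\pi]$ after possibly reversing the orientation of $\vb$, so Lemma~\ref{Fu properties of grafting}(1) still applies; but the claim as written is false.

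Two smaller points. In the iteration you verify that $\valpha$ stays horizontal and that already-grafted $\vb_i$ remain cylinders, but you never check that the not-yet-grafted $\vb_j$ retain their quarter-arc constraint on the intermediate surfaces; the paper fills this by noting that grafting along $\vb_i$ is supported on $\vb_i$, so the geodesic representative of a disjoint $\vb_j$ is literally unchanged. Separately, the paper handles curves of $\vbeta$ disjoint from $\valpha'$ by repartitioning them into an enlarged $\valpha'' \supset \valpha'$ before running Thurston--Veech, rather than your route of observing such a $\vb$ is already isotopic to a horizontal core curve on $\TV(\valpha',\vgamma)$; both strategies work, though your claim deserves a line of justification (on a horizontally periodic surface every simple closed horizontal geodesic is a core curve or a cylinder boundary, hence isotopic to a core curve).
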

\begin{proof}
Consider some translation surface which realizes $\valpha$ as a directional multicylinder and $\vbeta$ as an arbitrary multicylinder.
Postcomposing with an element of $\GL_2(\RR)$ as necessary, we may assume that each curve of $\valpha = \vec{a}_1\cup...\cup \vec{a}_m$ is horizontal and oriented in the +$x$-direction. Moreover, we can shear the surface enough so that the angle of the holonomy of each curve of $\vbeta$ lies in the set $[0,\frac{\pi}{2}) \cup [\pi,\frac{3\pi}{2})$. Call this new surface $(X,\omega)$.

Suppose the curves of $\vbeta$ which are core curves of horizontal cylinders on $(X,\omega)$ are $\vh_1,...,\vh_k$.
As in Lemma \ref{lemma:tv-directional-equivalence}, we can choose a local period coordinate chart for the ambient stratum and cut out a non-zero $\mathbb{R}$-linear subspace $V$ of $H^1(X, \text{Zeros}(\omega);\mathbb{C})$ by stipulating that
\[\text{Im}(\text{hol}(\vec{a}_i)) = 0 \text{ and } \text{Im}(\text{hol}(\vec{h}_j)) = 0\]
for all $i = 1,...,m$ and $j = 1,...,k$.
Recall by Lemma \ref{lem:cylinderspersist} there is some relatively open subset $U \subset V$ containing $(X, \omega)$ in the stratum in which each element of $U$ realizes $(\valpha, \vbeta)$ as a pair of multicylinders. 
Furthermore, $U$ can be chosen sufficiently small enough so that $\vbeta$ still points in direction $[0,\frac{\pi}{2}) \cup [\pi,\frac{3\pi}{2})$ on every surface.
Since $V$ is cut out by integral-linear equations, rational points $H^1(X, \text{Zeros}(\omega); \QQ \oplus i \QQ)$ are dense in $V$ and so we can find some square-tiled surface 
\[(X', \omega') \in H^1(X, \text{Zeros}(\omega);\mathbb{Q} \oplus i\mathbb{Q}) \cap U.\]
Since square-tiled surfaces are vertically and horizontally periodic, we may consider the filling pair $(\valpha', \vec{\gamma})$ where $\valpha'$ are the horizontal core curves (oriented in the $+x$ direction) and $\vec{\gamma}$ are the vertical core curves (oriented in the $+y$ direction). In particular, $\valpha \subset \valpha'$ and each curve $\vb \in \vbeta$ is coherent with $\valpha'$ and $\vec{\gamma}$.
\bigskip

Conversely, assume such a pair $(\valpha', \vgamma)$ exists. If some curves of $\vbeta$ do not intersect the curves of $\valpha'$, then we can re-partition the pair 
\[\valpha'':= \valpha' \cup 
\singletons_{\valpha'}(\vbeta)
\text{ and }
\vbeta' := \vbeta - 
\singletons_{\valpha'}(\vbeta)\]
so that every curve in $\vbeta'$ intersects $\valpha''$. Note that though $(\valpha'',\vgamma)$ may not be coherent, e.g. when $\vb_{i_j}$ intersects $\vgamma$ in the opposite direction as $\valpha'$ does, the pair is still coherently orientable, as we can simply flip the orientations on each $\vb_{i_k}$.

By Construction \ref{construction:tv}, there is some square-tiled surface $(X , \omega)$ that realizes $\valpha''$ and $\vgamma$ as its horizontal and vertical cylinders, respectively. Moreover, we may assume $\valpha'$ points in the $+x$ direction and $\vgamma$ the $+y$ direction. Coherence with $(\valpha', \vec{\gamma})$ combined with disjointness from $\valpha'' - \valpha'$ ensures that each curve $\vb \subset \vbeta'$ is coherent with $\valpha''$ and $\vec{\gamma}$.
Corollary \ref{cor:realize directional implies angles} applied using $\valpha''$ as the horizontal cylinders, followed by rotating the surface by $\pi/2$ and re-applying the Corollary, implies that they geodesic representatives of $\vb$ on $(X , \omega)$ are concatenations of either only $[0,\frac{\pi}{2}]$-, $[\frac{\pi}{2}, \pi]$- , $[\pi,\frac{3\pi}{2}]$-, or $[\frac{3\pi}{2}, 2\pi]$-saddle connections (or a cylinder pointing in one of those intervals).

Suppose that $\vbeta' = \vb_1 \cup ... \cup \vb_m$. We will first show $(\valpha',\vb_1)$ can be simultaneously realized as cylinders on some translation surface using horizontal grafting (See Construction \ref{construction:grafting}). If this is already the case on $(X , \omega)$, we then proceed to $(\valpha, \vb_1 \cup \vb_2)$. By Lemma \ref{Fu properties of grafting} (1), horizontal grafting along $\vb_1$ yields a translation surface. Because no curve in $\vbeta'$ is disjoint from $\valpha''$, all curves of $\vbeta'$ have a non-horizontal saddle connection. By Lemma \ref{Fu properties of grafting} (2), we can graft horizontally along $\vb_1$ on $(X,\omega)$ some sufficient amount to obtain a translation surface $(X_1,\omega_1)$ that realizes $\vb_1$ as the core of a cylinder. Furthermore, $\valpha''$ remains as the cores of the complete set of horizontal cylinders on $(X_1,\omega_1)$ because horizontal grafting preserves the horizontal foliation.

Next, we will realize $\vb_2$ on $(X_1,\omega_1)$ and re-iterate this procedure. Assume $\vb_2$ is not the core of a cylinder, or else we can move onto $\vb_3$. By Lemma \ref{minimal position}, $\vb_2$ is disjoint from $\vb_1$ because $\geomintersect(\vb_1, \vb_2)=0$ and $\vb_1$ and $\vb_2$ are in minimal position.
We note that the vertical foliation of the grafted surface is no longer $\vgamma$.
However, $\vb_1$ and $\vb_2$ did not intersect transversely on $(X,\omega)$ and $(X,\omega)$ is isometric to $(X_1,\omega_1)$ away from $\vb_1$ and the inserted parallelograms, so the saddle connections comprising $\vb_2$ on $(X_1,\omega_1)$ all point in the same directions as they did on $(X,\omega)$.

Again, we can graft horizontally along $\vb_2$ enough to obtain a translation surface $(X_2,\omega_2)$ that realizes $\vb_2$ as the core of a cylinder. Moreover, by Lemma \ref{local_procedure}, $\vb_1$ is still the core of a cylinder on $(X_2,\omega_2)$. Once again, $\valpha''$ remains as the cores of the complete set of horizontal cylinders.

We continue iterating this process for each non-cylinder curve in $\vbeta'$ to realize $\vbeta$ as a multicylinder on some translation surface $(X_N,\omega_N)$ where $\valpha$ is a directional multicylinder. 
\end{proof}

\section{Pairwise coherence} \label{sec:pairwise-coherent-multicurves-as-cylinders}

Recall that two oriented multicurves $\valpha$ and $\vbeta$ on $S$ are said to be {\em pairwise coherent} if for any pair of curves $\vec{a_i} \subset \valpha$ and $\vec{b_j} \subset \vbeta$, the pair $(\vec{a_i}, \vec{b_j})$ is coherent.
Lemma \ref{1 parallel implies coherently orientable} implies that pairwise coherence is necessary for two multicurves to be simultaneously realizable as multicylinders.

In this section, we provide two examples of a pair of pairwise coherent, but not coherently orientable, multicurves $\valpha$ and $\vbeta$ on some $S$.
Our first Example \ref{ex:multicyl_notcoherent} is an instance where $\valpha$ and $\vbeta$ can be simultaneously realized as a pair of multicylinders on some translation surface, whereas in Example \ref{ex:pairwisecoherent_notrealizable}, they cannot.

\begin{example}\label{ex:multicyl_notcoherent}
Consider the multicurves $\valpha = \vec{a}_1 \cup \vec{a}_2$ and $\vbeta = \vec{b}_1 \cup \vec{b}_2$ on $S_2$ as shown to the right in Figure \ref{fig: not coherent}. The picture to the left is a translation surfaces which realizes $\valpha$ and $\vbeta$ as cylinders. However, by observation of all possible assignments of orientation to $\vec{a}_1$, $\vec{a}_2$, $\vec{b}_1$, and $\vec{b}_2$, we see there is no combination of orientations that makes $\valpha$ and $\vbeta$ coherent.
\end{example}

\begin{figure}[h]
    \centering
    \subcaptionbox{Polygonal representation}{\includegraphics[width=0.4\textwidth]{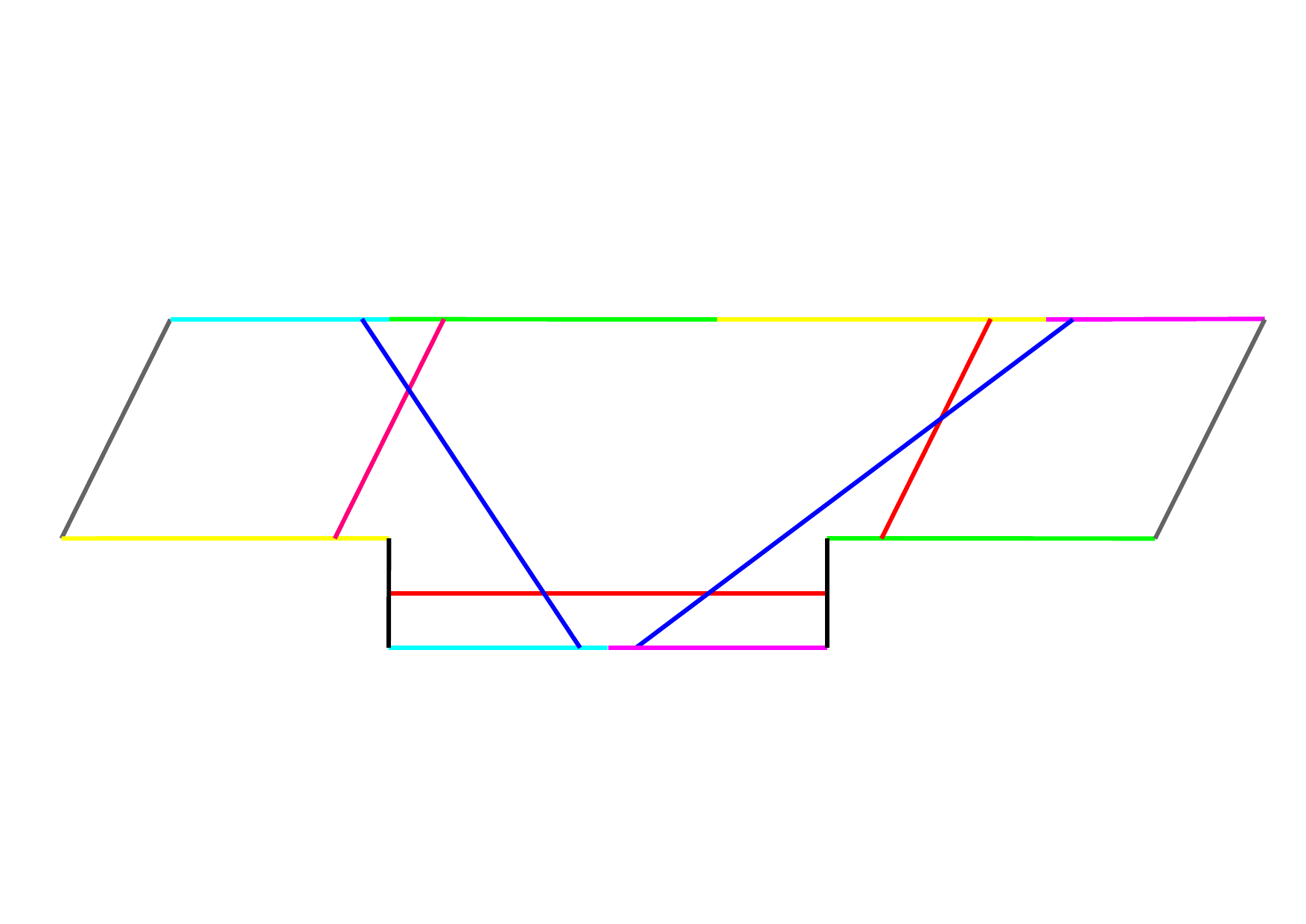}}
    \hfill
    \subcaptionbox{Topological representation}{\includegraphics[width=0.4\textwidth]{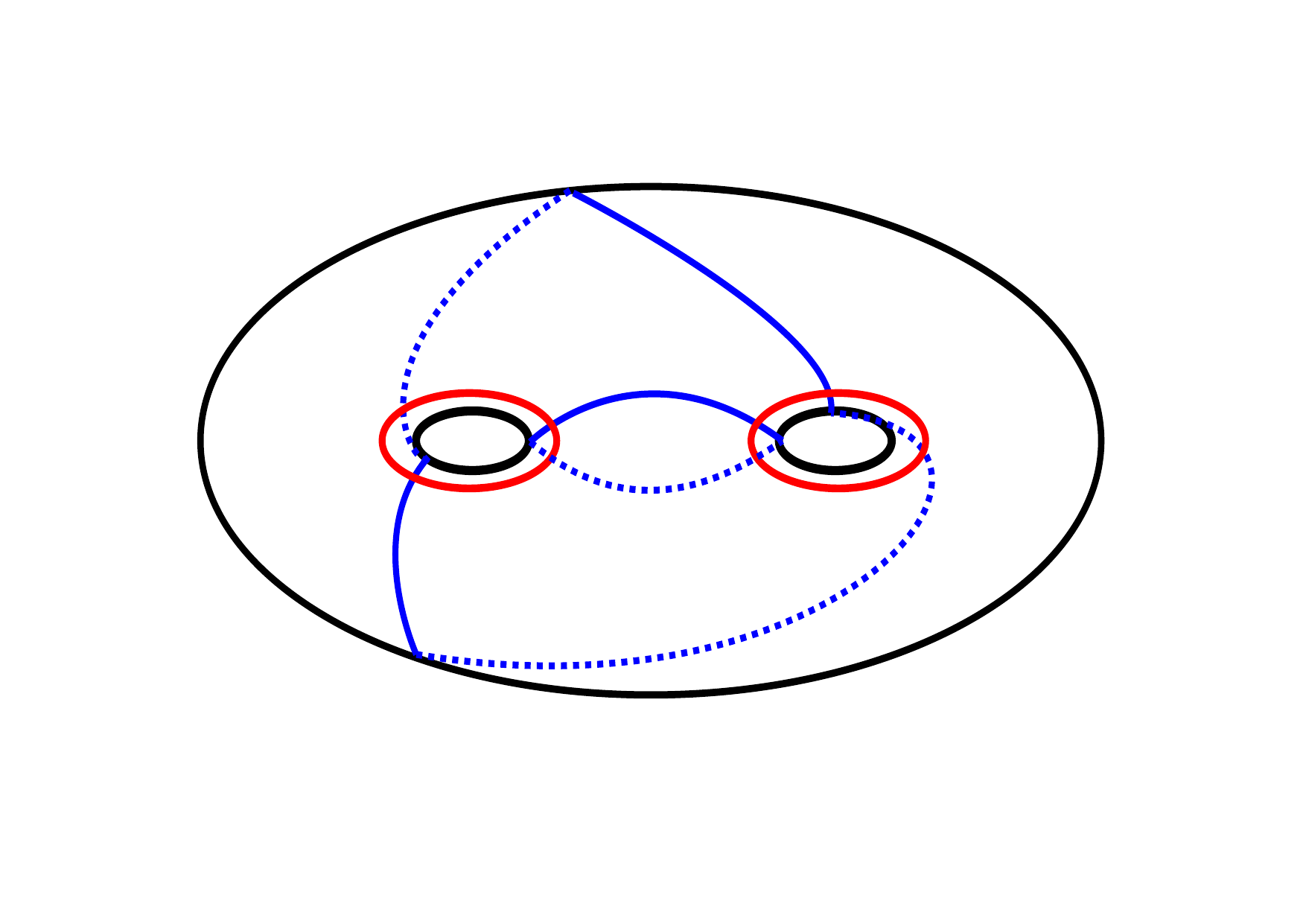}}
    \caption{A set of multicurves which are not coherently orientable are still simultaneously realizable as cylinders on this translation surface. }
    \label{fig: not coherent}
\end{figure}

\begin{example}\label{ex:pairwisecoherent_notrealizable}
Next, we provide an example of two pairwise coherent multicurves that cannot be realized as a pair of multicylinders on any translation surface.
Let multicurves
$\valpha = \vec{a_1} \cup \vec{a_2} \cup \vec{a_3} \cup \vec{a_4}$
and
$\vbeta= \vec{b_1} \cup \vec{b_2} \cup \vec{b_3}$ on $S_3$
be as in Figure \ref{fig:pairwise-nonrealizable}. 
These multicurves are pairwise coherent but are not coherently orientable. 

\begin{figure}[h]
    \centering
    \begin{subfigure}[c]{0.6\textwidth}
      \centering
      \def\svgwidth{\columnwidth}
      \includegraphics[width=.8\textwidth]{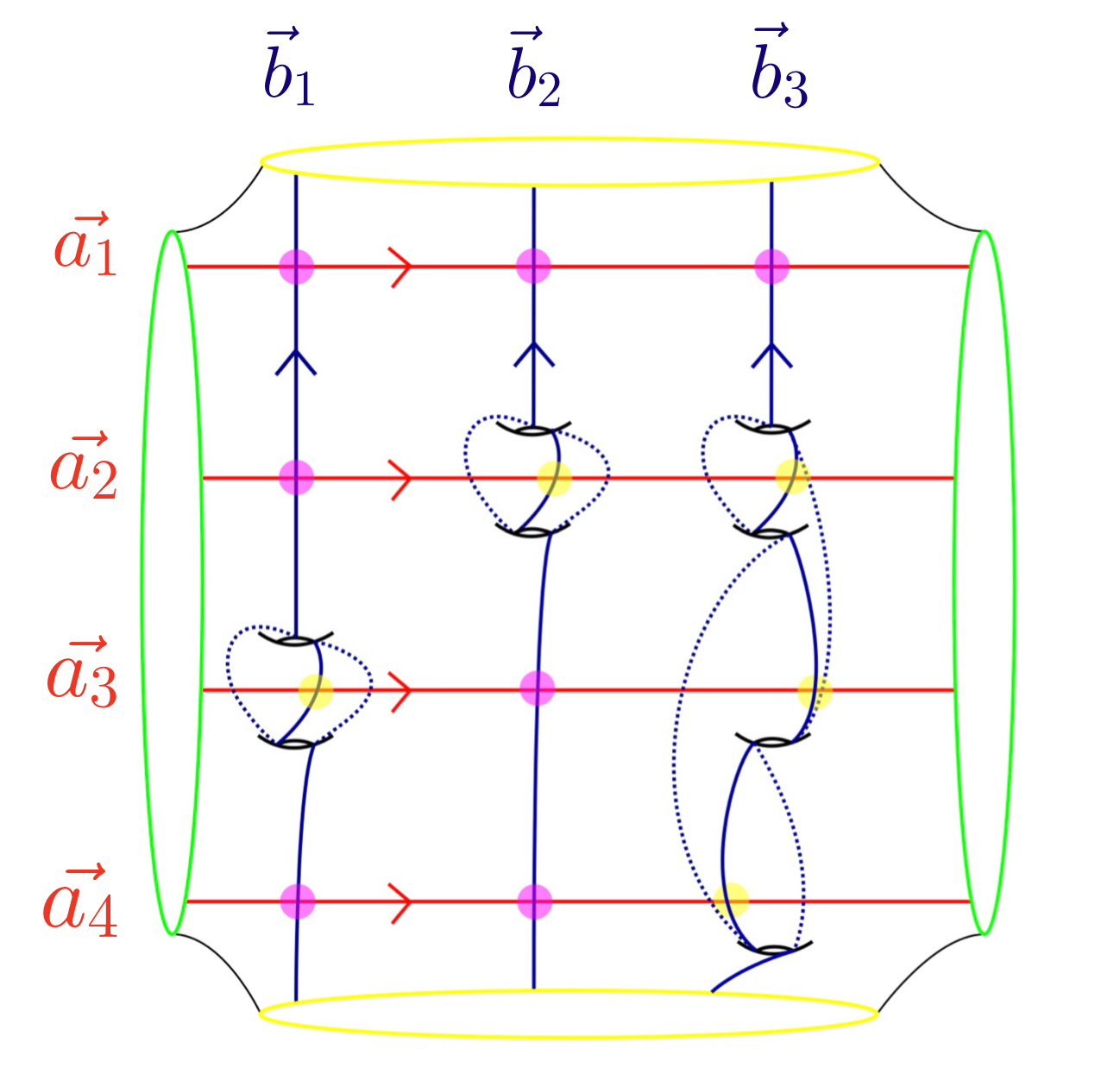}
    \end{subfigure}
\hspace{1ex}
    \begin{subfigure}[c]{0.2\textwidth}
      \centering
      \Large
      \begin{tabular}{|c|c|c|c|} 
        \hline
        & $\vec{b}_1$ & $\vec{b}_2$ & $\vec{b}_3$\\ 
        \hline
        $\vec{a}_1$ & + & + & +\\ 
        \hline
        $\vec{a}_2$ & + & $-$ & $-$\\ 
        \hline
        $\vec{a}_3$ & $-$ & + & $-$ \\
        \hline
        $\vec{a}_4$ & $+$ & + & $-$ \\
        \hline
      \end{tabular}
    \end{subfigure}
    \caption{In the table to the right, the intersection pattern between the two multicurves is given by $\algintersect(\va_i,\vb_j)$, where + indicates $\algintersect(\va_i,\vb_j) > 0$ and $-$ indicates $\algintersect(\va_i,\vb_j) < 0$.}
    \label{fig:pairwise-nonrealizable}
\end{figure}

Suppose by contradiction there is some translation surface $(X,\omega)$ that realizes $\valpha$ and $\vbeta$ as cylinders. Thus, there exists an assignment of angles, $\{\anga_1, \anga_2, \anga_3, \anga_4, \angb_1, \angb_2, \angb_3\}$ relative to the positive $x$-axis recording the angles of the cylinders.
Furthermore, the direction in which $\vec{a}_i$ contacts $\vec{b}_j$ agrees with the algebraic intersection numbers on $S_3$ shown in Figure \ref{fig:pairwise-nonrealizable}. As we shall see, it is impossible to find any such collection of angles $\{\anga_1, \anga_2, \anga_3, \anga_4, \angb_1, \angb_2, \angb_3\}$ that agrees with the prescribed intersection numbers.

When assigning angles to curves, without loss of generality, we can choose any one curve to begin with and give it any angle. So, let us set $\angb_1 = 0$.
Then $\anga_1 \in (\pi, 2\pi)$ and $\anga_2 \in (\pi, 2\pi)$ because $\algintersect(\anga_1,\angb_1) > 0$ and $\algintersect(\anga_2,\angb_1) > 0$.
We note that $\anga_1$ cannot equal $\anga_2$, for if it did then $\vb_2$ could have constant slope on $(X,\omega)$ because it is not coherent with the foliation in the direction of $\anga_1=\anga_2$.
Thus, we will break up the rest of this proof into two cases, (1) $\anga_1 > \anga_2$ and (2) $\anga_1 < \anga_2$, each with two subcases. See Figure \ref{fig:pairwise-nonrealizable} for illustrations.
 
\begin{enumerate} 
\item We begin with the assumption that $\anga_1 > \anga_2$.
 
In this case, $\algintersect(\va_1,\vb_2) >0 $ tells us that $\angb_2 \in \big( (\anga_1, 2 \pi) \cup [0, \anga_1 - \pi) \big)$ and $\algintersect(\va_2,\vb_2) < 0$ means that $\angb_2 \in (\anga_2 - \pi, \anga_2)$. Together, we have that $$\angb_2 \in \big((\anga_1, 2 \pi) \cup [0, \anga_1 - \pi)\big) \cap (\anga_2 - \pi,  \anga_2)$$ which simplifies to
\[\angb_2 \in (\anga_2 - \pi, \anga_1 - \pi).\]
Since $\vb_3$ has the same algebraic intersections with $\va_1$ and $\va_2$, we get the same conclusion for $\angb_3$.

We can rule out the case that $\angb_2 = \angb_3$ as above, as this would imply that $\va_3$ would not be coherent with the foliation in the direction of $\angb_2$ and $\angb_3$. Since $\angb_2$ and $\angb_3$ lie in the same interval, we have the following two subcases to examine:
\begin{enumerate}
\item Suppose first that $\angb_3 > \angb_2$.  Then, because $\algintersect(\va_3,\vb_2) > 0$ and  $\algintersect(\va_3,\vb_3) < 0$, we have that $$\anga_3 \in \big( (\angb_2 + \pi, 2\pi) \cup (0, \angb_2) \big) \cap (\angb_3, \angb_3 + \pi)$$ which simplifies to 
\[\anga_3 \in (\angb_2 + \pi,\angb_3 + \pi) \subset (\pi, 2\pi).\]
However, this implies that $\algintersect(\va_3, \vb_1) > 0$, whereas the table in Figure \ref{fig:pairwise-nonrealizable} states that $\algintersect(\va_3,\vb_1) < 0$, resulting in a contradiction.

\item Now, suppose $\angb_3 < \angb_2$. We have that $\algintersect(\va_4,\vb_2) > 0$ and $\algintersect(\va_4,\vb_3) < 0$. Therefore, $$\anga_4 \in \big( (\angb_2 + \pi, 2 \pi) \cup (0, \angb_2) \big) \cap (\angb_3, \angb_3 + \pi)$$ which simplifies to 
\[\anga_4 \in (\angb_3, \angb_2) \subset (0, \pi).\]
However, we have arrived at a contradiction again since this implies that $\algintersect(\va_4 , \vb_1) < 0$, whereas Figure \ref{fig:pairwise-nonrealizable} states that $\algintersect(\va_4,\vb_1) > 0$.
\end{enumerate}
\item For the second case, we assume $\anga_1 < \anga_2.$ Because $\algintersect(\va_1,\vb_2) > 0 $ and $\algintersect(\va_2,\vb_2) < 0$, we see that $$\angb_2 \in \big( (\anga_1, 2 \pi) \cup [0, \anga_1 - \pi) \big) \cap (\anga_2 - \pi, \anga_2)$$ which simplifies to $$\angb_2 \in (\anga_1, \anga_2).$$ By identical reasoning using $\angb_3$ in place of $\angb_2$, we have that $\angb_3$ lies in the same interval. 
We again split into two cases:
    \begin{enumerate}
        \item If $\angb_3 > \angb_2$, then because $\algintersect(\va_4,\vb_2) > 0$ and  $\algintersect(\va_4,\vb_3) < 0$, we have that $$\anga_4 \in (\angb_2 - \pi, \angb_2) \cap \big( (\angb_3, 2\pi) \cup (0, \angb_3 - \pi) \big)$$ which simplifies to $$\anga_4 \in (\angb_2 - \pi, \angb_3 - \pi).$$ This result implies that $\anga_4 \in (0, \pi)$ and hence $\algintersect(\va_4, \vb_1) < 0$, however this contradicts Figure \ref{fig:pairwise-nonrealizable} where $\algintersect(\va_4, \vb_1) > 0$.
        \item If $\angb_3 < \angb_2$, then because $\algintersect(\vec{a_3},\vec{b_2}) > 0$ and  $\algintersect(\vec{a_3},\vec{b_3}) < 0$, we have that $$\anga_3 \in (\angb_2 - \pi, \angb_2) \cap \big( (\angb_3, 2 \pi) \cup (0, \angb_3 - \pi) \big)$$ which simplifies to $$\anga_3 \in (\angb_3, \angb_2).$$ This result implies that $\anga_3 \in (\pi, 2\pi)$ and hence $\algintersect(\va_3, \vb_1) > 0$, contradicting Figure \ref{fig:pairwise-nonrealizable} where $\algintersect(\va_3, \vb_1) < 0$.
    \end{enumerate}
    
\end{enumerate}
We conclude no assignment of angles to $\valpha$ and $\vbeta$ on $(X,\omega)$ is compatible with the specified algebraic intersection numbers. Therefore, $\valpha$ and $\vbeta$ cannot be realized as a pair of multicylinders on any translation surface.
\end{example}

We observe that all of the curves in the example above are homologically independent, so they will satisfy \eqref{noobstruction}. Moreover, our argument used only the intersection numbers of these curves, not their precise configuration on the surface, so this obstruction can be detected at the level of homology.

\begin{figure}[ht]
\centering
    \begin{subfigure}{\linewidth}
  \includegraphics[width=.5\linewidth]{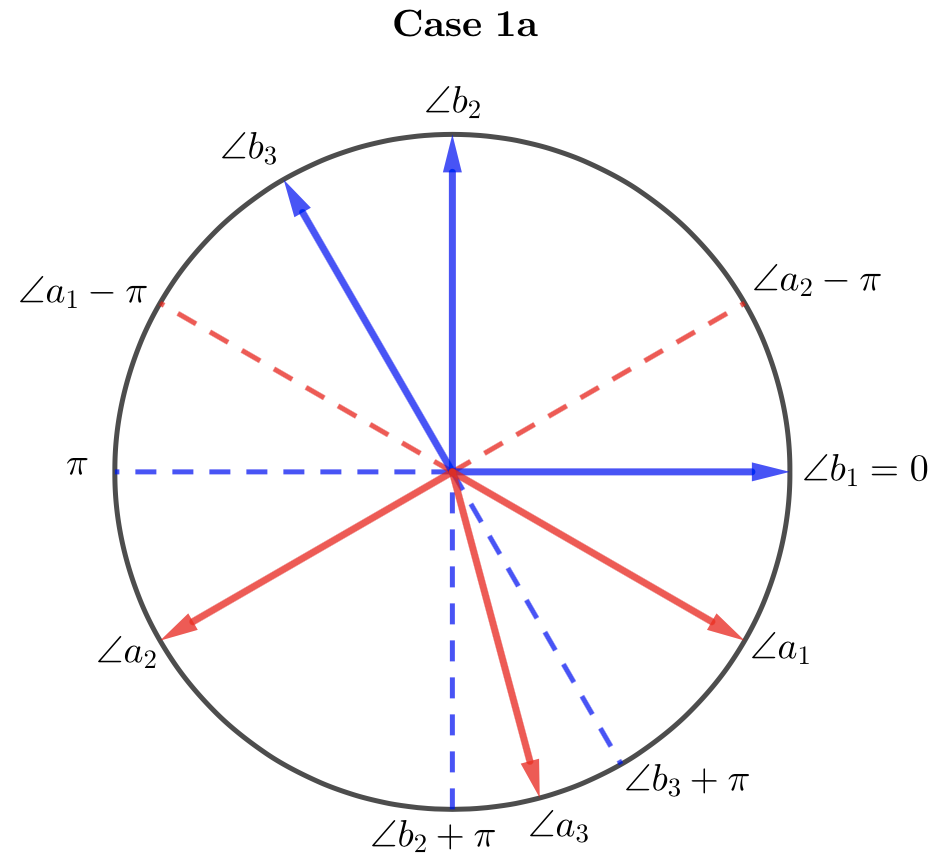}\hfill
  \includegraphics[width=.5\linewidth]{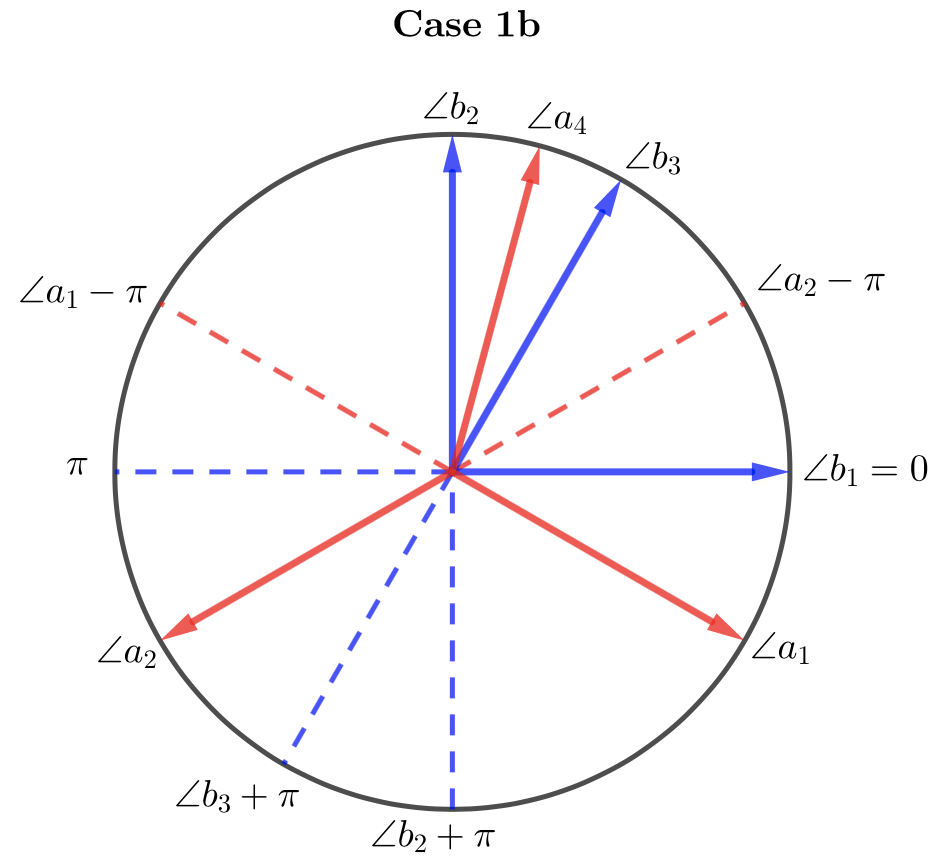}\hfill
  \end{subfigure}\par\bigskip \bigskip
  \begin{subfigure}{\linewidth}
  \includegraphics[width=.5\linewidth]{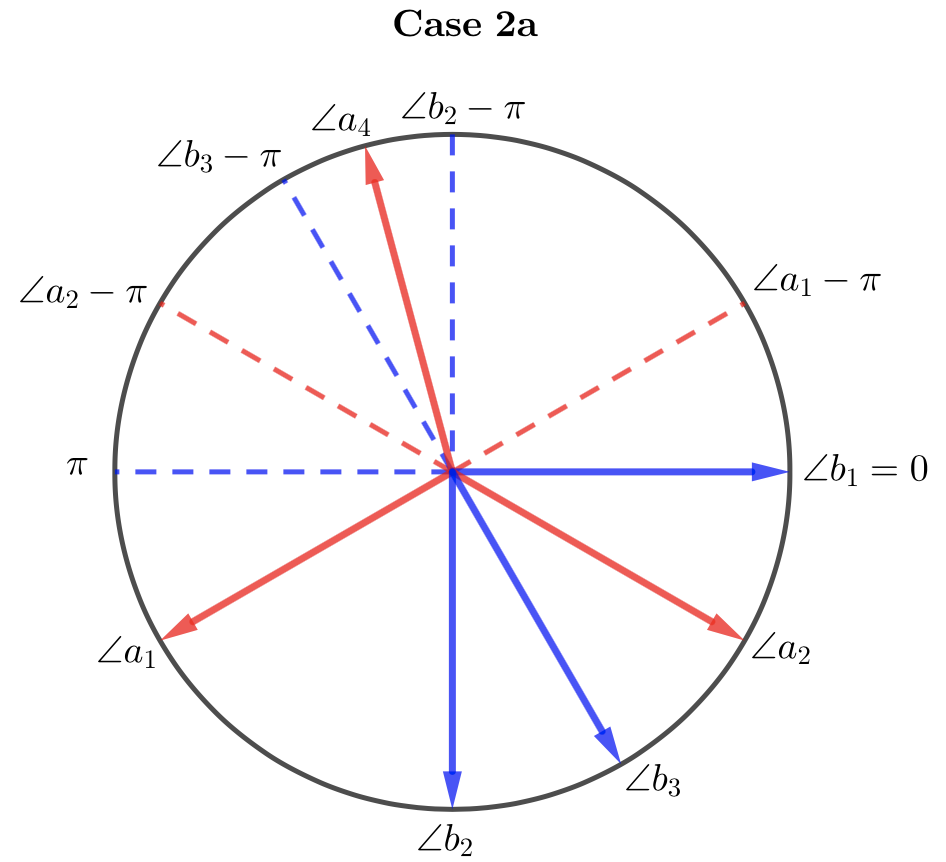}\hfill
  \includegraphics[width=.5\linewidth]{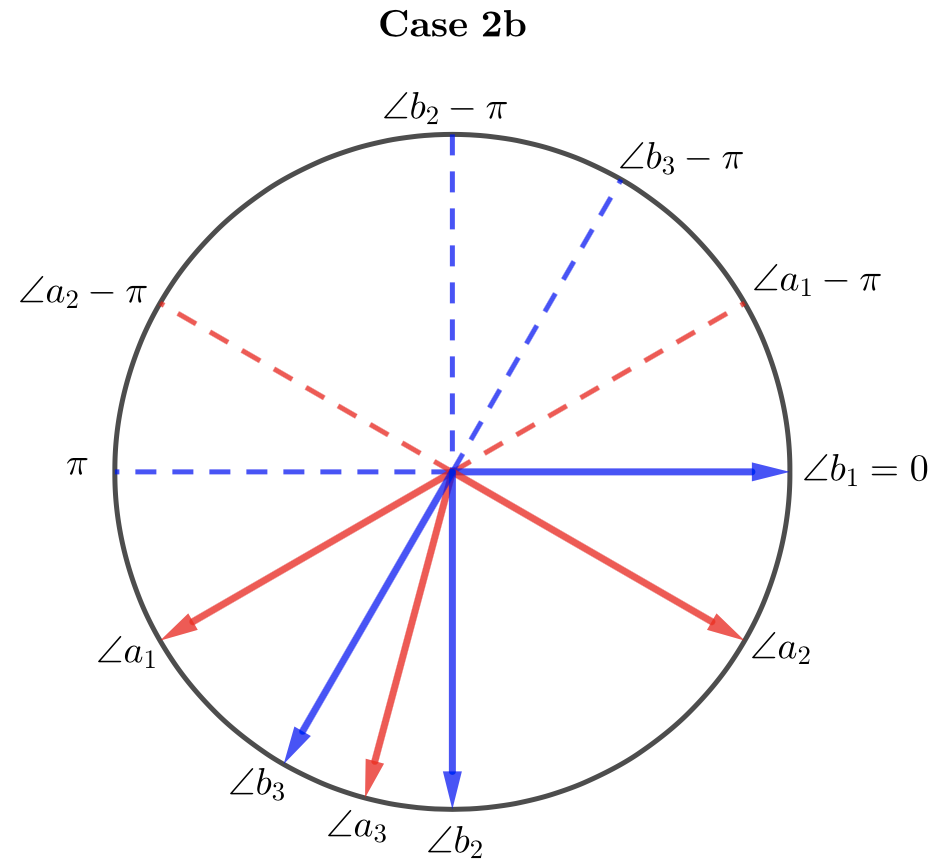}\hfill
  \end{subfigure}
  \caption{The ending states of each of the four cases in Example \ref{ex:pairwisecoherent_notrealizable}.}
    \label{fig:cases-pairwise-nonrealizable}
\end{figure}

\bibliography{references}{}
\bibliographystyle{amsalpha.bst}

\Addresses
\end{document}